\renewcommand{\uppercasenonmath}[1]{}
\numberwithin{equation}{section} \theoremstyle{plain}
\newtheorem*{thm*}{Main Theorem}
\newtheorem{thm}{Theorem}[section]
\newtheorem{cor}[thm]{Corollary}
\newtheorem*{cor*}{Corollary}
\newtheorem{lem}[thm]{Lemma}
\newtheorem*{lem*}{Lemma}
\newtheorem*{fact*}{Fact}
\newtheorem*{nota*}{Notation}
\newtheorem{prop}[thm]{Proposition}
\newtheorem*{prop*}{Proposition}
\newtheorem*{rem*}{Remark}
\newtheorem*{observation*}{Observation}
\newtheorem*{exa*}{Example}
\newtheorem*{df*}{Definition}
\newtheorem*{con*}{Construction}
\renewcommand{\geq}{\geqslant}
\renewcommand{\leq}{\leqslant}
\begin{document}
\begin{center}
{\large  \bf  Cofiniteness with respect to extension of Serre subcategories}

\vspace{0.5cm} Xiaoyan Yang\\
Department of Mathematics, Northwest Normal University, Lanzhou 730070,
China
E-mail: yangxy@nwnu.edu.cn
\end{center}

\bigskip
\centerline { \bf  Abstract}
\leftskip10truemm \rightskip10truemm \noindent Let $\mathfrak{a}$ be an ideal of a commutative noetherian ring $R$, $\mathcal{S}$ a Serre
subcategory of $R$-modules satisfying the condition $C_\mathfrak{a}$ and $\mathcal{N}$ the subcategory of finitely
generated $R$-modules. In this paper, we continue the study of $\mathcal{NS}$-$\mathfrak{a}$-cofinite modules with respect
to the extension subcategory $\mathcal{NS}$, show that some classical results of $\mathfrak{a}$-cofiniteness hold for
$\mathcal{NS}$-$\mathfrak{a}$-cofiniteness in the cases $\mathrm{dim}R=d$ or $\mathrm{dim}R/\mathfrak{a}=d-1$, where $d$ is a positive integer. We also study $\mathcal{NS}$-$\mathfrak{a}$-cofiniteness of local cohomology modules and the modules $\mathrm{Ext}^i_R(N,M)$ and $\mathrm{Tor}_i^R(N,M)$.\\
\vbox to 0.3cm{}\\
{\it Key Words:} Serre subcategory; $\mathcal{NS}$-$\mathfrak{a}$-cofinite module\\
{\it 2020 Mathematics Subject Classification:} 13E05; 13C15

\leftskip0truemm \rightskip0truemm
\bigskip
\section* { \bf Introduction and Preliminaries}
 Throughout this paper, $R$ is a commutative noetherian ring
with identity, $\mathfrak{a}$ is a proper ideal of $R$ and $\mathcal{S}$ is a Serre
subcategory of $R$-modules, that is, $\mathcal{S}$ is closed
under taking submodules, quotients and extensions. Alipour and Sazeedeh \cite{AS} introduced the cofiniteness with respect to $\mathcal{S}$ and $\mathfrak{a}$.
An $R$-module $M$ is said to be \emph{$\mathcal{S}$-$\mathfrak{a}$-cofinite} if $\mathrm{Supp}_RM\subseteq\mathrm{V}(\mathfrak{a})$ and
$\mathrm{Ext}^i_R(R/\mathfrak{a},M)\in \mathcal{S}$ for all $i\geq 0$.

Let $\mathcal{N}$ be the subcategory of finitely generated $R$-modules.
The extension subcategory induced by $\mathcal{N}$ and $\mathcal{S}$ is denoted by $\mathcal{NS}$, consisting of those $R$-modules
$M$ for which there exist an exact sequence $0\rightarrow N\rightarrow M\rightarrow S\rightarrow 0$ such that $N\in\mathcal{N}$ and
$S\in\mathcal{S}$. It follows from \cite[Corollary 3.3]{Y} that $\mathcal{NS}$ is Serre. When $\mathcal{S}=0$, an $\mathcal{NS}$-$\mathfrak{a}$-cofinite module was known
as classical $\mathfrak{a}$-cofinite module, defined for the first time by Hartshorne \cite{H}, giving a negative answer to a question of \cite[Expos XIII, Conjecture 1.1]{G}, studied by numerous authors \cite{BN,BN1,BNS,LM,LM1,NS}. When $\mathcal{S}=\mathcal{A}$ the subcategory of artinian
modules, they are $\mathfrak{a}$-cominimax modules studies in \cite{BN,Z} and when $\mathcal{S}=\mathcal{F}$ the subcategory of all modules of finite support, they are $\mathfrak{a}$-weakly cofinite modules studies in \cite{Ba,DM}.
Recall that \emph{$\mathcal{S}$ satisfies the
condition $C_\mathfrak{a}$} if for every $R$-module $M$, the following implication holds.
\begin{center}$C_\mathfrak{a}$: If $\Gamma_\mathfrak{a}(M)=M$ and $(0:_M\mathfrak{a})$ is in $\mathcal{S}$, then $M$ is in $\mathcal{S}$.\end{center} By \cite[Lemma 2.2]{AM}, the following Serre subcategories satisfy the condition $C_\mathfrak{a}$.
 The class of zero modules;
 The class of artinian $R$-modules;
The class of artinian $\mathfrak{a}$-cofinite $R$-modules;
The class of $R$-modules with finite support; The class of $R$-modules with finite Krull dimension.
In this paper, we always assume that $\mathcal{S}$ satisfies the condition $C_\mathfrak{a}$.

The support of the Serre subcategory $\mathcal{S}$ is denoted by $\mathrm{Supp}\mathcal{S}$ which is
\begin{center}$\mathrm{Supp}\mathcal{S}=\bigcup_{M\in\mathcal{S}}\mathrm{Supp}_RM=
\{\mathfrak{p}\in\mathrm{Spec}R\hspace{0.03cm}|\hspace{0.03cm}R/\mathfrak{p}\in\mathcal{S}\}$.\end{center}For an $R$-module $M$, we denote by $\mathrm{Max}M$ the set of maximal ideal in $\mathrm{Supp}_RM$.
Assume that $\mathcal{S}$ satisfies the
condition $C_\mathfrak{a}$. Alipour and Sazeedeh \cite{AS,S} extended the fundamental results about $\mathfrak{a}$-cofinite modules at
small dimensions to $\mathcal{NS}$-$\mathfrak{a}$-cofinite modules. They showed that if $M$
is an $\mathcal{NS}$-$\mathfrak{a}$-cofinite $R$-module of dimension $\leq 1$ with $\mathrm{Max}M\subseteq\mathrm{Supp}\mathcal{S}$ and $N$ is a finitely generated $R$-module, then $\mathrm{Ext}^i_R(N,M)$ is $\mathcal{NS}$-$\mathfrak{a}$-cofinite for each $i\geq 0$ (see \cite[Theorem 2.7]{AS}); if $\mathrm{dim}R/\mathfrak{a}=1$ and $\mathrm{Max}M\subseteq\mathrm{Supp}\mathcal{S}$ then $M$
is $\mathcal{NS}$-$\mathfrak{a}$-cofinite if and only if $\mathrm{Supp}_RM\subseteq\mathrm{V}(\mathfrak{a})$ and $\mathrm{Ext}^i_R(R/\mathfrak{a},M)\in\mathcal{NS}$ for $i=0,1$ (see \cite[Theorem 3.2]{AS}); if $R$ is a local ring with $\mathrm{dim}R/\mathfrak{a}=2$ and satisfies some further conditions, then an $R$-module $M$
is $\mathcal{NS}$-$\mathfrak{a}$-cofinite if and only if $\mathrm{Supp}_RM\subseteq\mathrm{V}(\mathfrak{a})$ and $\mathrm{Ext}^i_R(R/\mathfrak{a},M)\in\mathcal{NS}$ for $i=0,1,2$ (see \cite[Corollary 2.11]{S}). They also investigated $\mathcal{NS}$-$\mathfrak{a}$-cofiniteness of local cohomology modules (see \cite[Theorem 2.13]{S}).

 The first aim of this paper is to improve Alipour and Sazeedeh's results in \cite{AS}, that is to say, eliminate the hypothesis $\mathrm{Max}M\subseteq\mathrm{Supp}\mathcal{S}$ entirely. We show that

 \vspace{2mm} \noindent{\bf Theorem 1.}\label{Th1.4} {\it{Let $M$ be an $R$-module with $\mathrm{dim}_RM\leq1$. Then
$M$ is $\mathcal{NS}$-$\mathfrak{a}$-cofinite if and only if $\mathrm{Supp}_RM\subseteq\mathrm{V}(\mathfrak{a})$ and  $\mathrm{Ext}^i_R(R/\mathfrak{a},M)\in\mathcal{NS}$ for $i=0,1$ {\rm (see Theorem \ref{lem:2.3}).}}}
\vspace{2mm}

 The second aim of this paper is to extend the results about $\mathfrak{a}$-cofiniteness in the cases $\mathrm{dim}R=d\geq1$ or $\mathrm{dim}R/\mathfrak{a}=d-1$ to $\mathcal{NS}$-$\mathfrak{a}$-cofiniteness, and improve Sazeedeh's some results in \cite{S}. More precisely, we show that

\vspace{2mm} \noindent{\bf Theorem 2.}\label{Th1.4} {\it{Let $\mathfrak{a}$ be an ideal of $R$ such that either $\mathrm{dim}R/\mathfrak{a}=d-1$ or $\mathrm{dim}R=d$. Then
an $\mathfrak{a}$-torsion $R$-module $M$ is $\mathcal{NS}$-$\mathfrak{a}$-cofinite if and only if $\mathrm{Ext}^i_R(R/\mathfrak{a},M)\in\mathcal{NS}$ for $i\leq d-1$ {\rm (see Theorem \ref{lem:2.10} and Corollary \ref{lem:2.6}).}}}
\vspace{2mm}

\noindent{\bf Theorem 3.}\label{Th1.4} {\it{Let $M$ be an $\mathcal{NS}$-$\mathfrak{a}$-cofinite $R$-module and $N$ a finitely generated $R$-module with $\mathrm{dim}_RN\leq2$. Then the $R$-modules $\mathrm{Ext}^i_R(N,M)$ and $\mathrm{Tor}_i^R(N,M)$ are $\mathcal{NS}$-$\mathfrak{a}$-cofinite for all $i\geq0$ {\rm (see Theorem \ref{lem:3.7}).}}}
\vspace{2mm}

As applications of these results, we show that if either $\mathrm{dim}R\leq2$ or $\mathrm{dim}R/\mathfrak{a}\leq 1$ then the subcategory $\mathcal{NS}(R,\mathfrak{a})_{cof}=\{M\in\textrm{Mod-}R\hspace{0.03cm}|\hspace{0.03cm}M\ \textrm{is}\ \mathcal{NS}\textrm{-}\mathfrak{a}\textrm{-cofinite}\}$ is abelian, and some results about $\mathcal{NS}$-$\mathfrak{a}$-cofiniteness of local cohomology modules are given.

Next we recall some notions which we will need later.

We write $\mathrm{Spec}R$ for the set of
prime ideals of $R$ and $\mathrm{Max}R$ for the set of
 maximal ideals of $R$. For an ideal $\mathfrak{a}$ in $R$, we set
\begin{center}$\mathrm{V}(\mathfrak{a})=\{\mathfrak{p}\in\textrm{Spec}R\hspace{0.03cm}|\hspace{0.03cm}\mathfrak{a}\subseteq\mathfrak{p}\}$.
\end{center}

Let $M$ be an $R$-module. The \emph{associated prime} of $M$, denoted by $\mathrm{Ass}_RM$, is
the set of prime ideals $\mathfrak{p}$ of $R$
such that there exists a cyclic submodule $N$ of $M$ with $\mathfrak{p}=\mathrm{Ann}_RN$. The set of prime ideals $\mathfrak{p}$ such that there exists a cyclic submodule $N$
of $M$ with $\mathfrak{p}\supseteq\mathrm{Ann}_RN$ is well-known to be the \emph{support} of $M$, denoted by $\mathrm{Supp}_RM$, which is equal to the set
 \begin{center}$\{\mathfrak{p}\in\mathrm{Spec}R\hspace{0.03cm}|\hspace{0.03cm}
M_\mathfrak{p}\neq0\}$.\end{center}
A prime ideal $\mathfrak{p}$ is said to be an \emph{attached prime} of $M$ if $\mathfrak{p}=\mathrm{Ann}_{R}(M/L)$ for some submodule $L$ of $M$. The set of attached primes of $M$ is denoted by $\mathrm{Att}_{R}M$. If $M$ is artinian, then $M$ admits a minimal secondary representation
$M=M_{1}+\cdots+M_{r}$ so that $M_{i}$ is $\mathfrak{p}_{i}$-secondary for $i=1,\cdots,r$. In this case, $\mathrm{Att}_{R}M=\{\mathfrak{p}_{1},\cdots,\mathfrak{p}_{r}\}$.

The \emph{arithmetic
rank of $\mathfrak{a}$}, denoted by
$\mathrm{ara}(\mathfrak{a})$, is the least number of elements of $R$ required to generate an ideal which has
the same radical as $\mathfrak{a}$, i.e.,
\begin{center}$\mathrm{ara}(\mathfrak{a})=\mathrm{min}\{n\geq0\hspace{0.03cm}|\hspace{0.03cm}\exists\ a_1,\cdots,a_n\in R\ \textrm{with}\ \mathrm{Rad}(a_1,\cdots,a_n)=\mathrm{Rad}(\mathfrak{a})\}$.\end{center}For an $R$-module $M$, the \emph{arithmetic rank of $\mathfrak{a}$ with respect
to $M$}, denoted by $\mathrm{ara}_M(\mathfrak{a})$, is defined by the arithmetic rank of the ideal $\mathfrak{a}+
\mathrm{Ann}_RM/\mathrm{Ann}_RM$ in the ring $R/\mathrm{Ann}_RM$.

The \emph{$i$th local cohomology} of an $R$-module $M$ with respect to $\mathfrak{a}$ is
\begin{center}$\mathrm{H}^i_\mathfrak{a}(M):=\underrightarrow{\textrm{lim}}_{t>0}\mathrm{Ext}^i_R(R/\mathfrak{a}^t,M)$.\end{center}
The reader can refer to \cite{BS} for more details about local cohomology. The module $M$ is called \emph{$\mathfrak{a}$-torsion} if $\Gamma_\mathfrak{a}(M):=\mathrm{H}^0_\mathfrak{a}(M)=M$, or equivalently, $\mathrm{Supp}_RM\subseteq\mathrm{V}(\mathfrak{a})$.

For an arbitrary $R$-module $M$, set
\begin{center}$\mathrm{cd}(\mathfrak{a},M)=\mathrm{sup}\{n\in\mathbb{Z}\hspace{0.03cm}|\hspace{0.03cm}\mathrm{H}_\mathfrak{a}^n(M)\neq0\}$.\end{center}
The \emph{cohomological dimension of $\mathfrak{a}$} is
\begin{center}$\mathrm{cd}(\mathfrak{a},R)=\mathrm{sup}\{\mathrm{cd}(\mathfrak{a},M)\hspace{0.03cm}|\hspace{0.03cm}M\ \textrm{is\ an}\ R\textrm{-module}\}$.\end{center}

\bigskip
\section{\bf Cofiniteness with respect to extension subcategories}
Let $d$ be a positive integer such that either $\mathrm{dim}R/\mathfrak{a}=d-1$ or $\mathrm{dim}R=d$. It is shown that an $R$-module
$M$ is $\mathcal{NS}$-$\mathfrak{a}$-cofinite if and only if $\mathrm{Supp}_RM\subseteq\mathrm{V}(\mathfrak{a})$ and $\mathrm{Ext}^i_R(R/\mathfrak{a},M)\in\mathcal{NS}$ for $i=0,\cdots,d-1$. Moreover, we show that the subcategory $\mathcal{NS}(R,\mathfrak{a})_{cof}$ is abelian in the cases $\mathrm{dim}R\leq2$ and $\mathrm{dim}R/\mathfrak{a}\leq 1$.

\begin{lem}\label{lem:2.2}{\it{Let $M$ be
an $R$-module such that $(0:_M\mathfrak{a})\in\mathcal{NS}$. Then $(0:_M\mathfrak{a}^n)\in\mathcal{NS}$ for all $n\geq1$.}}
\end{lem}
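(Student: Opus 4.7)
The plan is to argue by induction on $n$. The case $n=1$ is exactly the hypothesis, so assume $n\geq 2$ and $(0:_M\mathfrak{a}^{n-1})\in\mathcal{NS}$. The goal is then to fit $(0:_M\mathfrak{a}^n)$ into a short exact sequence whose outer terms are known to lie in $\mathcal{NS}$, and invoke the fact (recalled in the introduction via \cite[Corollary 3.3]{Y}) that $\mathcal{NS}$ is a Serre subcategory.

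Because $R$ is noetherian, write $\mathfrak{a}=(a_1,\dots,a_s)$ and consider the $R$-linear map
\[
\varphi\colon (0:_M\mathfrak{a}^n)\longrightarrow M^s,\qquad x\longmapsto (a_1x,\dots,a_sx).
\]
For any $x\in(0:_M\mathfrak{a}^n)$ and any $j$ we have $\mathfrak{a}^{n-1}(a_jx)\subseteq\mathfrak{a}^n x=0$, so $\varphi$ factors through $(0:_M\mathfrak{a}^{n-1})^s$. Its kernel is precisely $\{x\in M:a_ix=0\text{ for all }i\}=(0:_M\mathfrak{a})$. This yields the short exact sequence
\[
0\longrightarrow (0:_M\mathfrak{a})\longrightarrow (0:_M\mathfrak{a}^n)\longrightarrow \im(\varphi)\longrightarrow 0,
\]
with $\im(\varphi)\subseteq (0:_M\mathfrak{a}^{n-1})^s$.

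Now I would close the argument using the Serre property of $\mathcal{NS}$: the inductive hypothesis together with closure under finite direct sums (a consequence of extension closure) gives $(0:_M\mathfrak{a}^{n-1})^s\in\mathcal{NS}$, and subobject closure then gives $\im(\varphi)\in\mathcal{NS}$. Combined with $(0:_M\mathfrak{a})\in\mathcal{NS}$ and one more application of extension closure, we conclude $(0:_M\mathfrak{a}^n)\in\mathcal{NS}$, completing the induction.

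There is no real obstacle here; the only slightly delicate point is making sure the map $\varphi$ genuinely lands in $(0:_M\mathfrak{a}^{n-1})^s$, which is why one needs the exponent $n-1$ on the codomain (rather than $n$), so that the induction actually reduces $n$ by one at each step. Note also that the hypothesis that $\mathcal{S}$ satisfies $C_\mathfrak{a}$ plays no role in this particular lemma; only the Serre property of $\mathcal{NS}$ and the noetherianness of $R$ (which makes $\mathfrak{a}$ finitely generated) are used.
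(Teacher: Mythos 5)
Your proof is correct and complete: the map $\varphi\colon(0:_M\mathfrak{a}^n)\to(0:_M\mathfrak{a}^{n-1})^s$, $x\mapsto(a_1x,\dots,a_sx)$, does land where you claim, its kernel is $(0:_M\mathfrak{a})$, and the resulting short exact sequence together with the closure of the Serre subcategory $\mathcal{NS}$ under submodules, extensions, and hence finite direct sums closes the induction. The paper itself gives no argument for this lemma, only the pointer ``this follows from the proof of \cite[Theorem 2.15]{S}'', so there is nothing concrete to compare against; your argument is the standard one for statements of this type and is in fact more self-contained than the original. Your closing observation that only the Serre property of $\mathcal{NS}$ and the finite generation of $\mathfrak{a}$ are used (and not the condition $C_\mathfrak{a}$) is also accurate.
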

\begin{proof} This follows from the proof of \cite[Theorem 2.15]{S}.
\end{proof}

The following lemma is used at several places of this paper.

\begin{lem}\label{lem:2.1}{\it{Let $M$ be an $R$-module of zero dimension. Then $M$ is $\mathcal{NS}$-$\mathfrak{a}$-cofinite if and
only if $\mathrm{Supp}_RM\subseteq\mathrm{V}(\mathfrak{a})$ and $\mathrm{Hom}_R(R/\mathfrak{a},M)\in\mathcal{NS}$.}}
\end{lem}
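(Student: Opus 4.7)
The ``only if'' direction is immediate from the definition: $\mathrm{Supp}_RM\subseteq\mathrm{V}(\mathfrak{a})$ is part of being $\mathcal{NS}$-$\mathfrak{a}$-cofinite, and $\mathrm{Hom}_R(R/\mathfrak{a},M)=\mathrm{Ext}^0_R(R/\mathfrak{a},M)\in\mathcal{NS}$. The content is in the converse: assume $\mathrm{Supp}_RM\subseteq\mathrm{V}(\mathfrak{a})$ (so $M$ is $\mathfrak{a}$-torsion) and $(0:_M\mathfrak{a})\in\mathcal{NS}$; I must produce $\mathrm{Ext}^i_R(R/\mathfrak{a},M)\in\mathcal{NS}$ for every $i\geq 1$.

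The starting point is Lemma \ref{lem:2.2}, which yields $(0:_M\mathfrak{a}^n)\in\mathcal{NS}$ for all $n$, so that $M=\bigcup_n(0:_M\mathfrak{a}^n)$ is a directed union of $\mathcal{NS}$-submodules. Next, I would choose an exact sequence $0\to N\to (0:_M\mathfrak{a})\to S\to 0$ with $N\in\mathcal{N}$ and $S\in\mathcal{S}$. Because $N$ is finitely generated and $\dim_RN\leq\dim_RM=0$, the module $N$ has finite length, hence is Artinian, and its support lies in finitely many maximal ideals $\mathfrak{m}_1,\ldots,\mathfrak{m}_r\in\mathrm{V}(\mathfrak{a})$.

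The pivotal step is to manufacture a short exact sequence $0\to A\to M\to T\to 0$ in which $A$ is an Artinian $\mathfrak{a}$-torsion submodule of $M$ containing $N$, and $T\in\mathcal{S}$. To build $A$ one takes the $(\mathfrak{m}_1\cdots\mathfrak{m}_r)$-torsion part of $M$ (or an analogous Artinian envelope of $N$) and applies Melkersson's characterisation of Artinian modules (support in maximal ideals together with finitely generated socle), the socle being controlled by $N$. To secure $T\in\mathcal{S}$, one first checks that $(0:_{M/A}\mathfrak{a})\in\mathcal{S}$ and then invokes the standing hypothesis that $\mathcal{S}$ satisfies condition $C_\mathfrak{a}$. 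With the decomposition in hand, the long exact sequence for $\mathrm{Ext}^*_R(R/\mathfrak{a},-)$ finishes the argument: $\mathrm{Ext}^i_R(R/\mathfrak{a},A)$ is finitely generated by the classical $\mathfrak{a}$-cofiniteness of Artinian $\mathfrak{a}$-torsion modules, hence lies in $\mathcal{N}\subseteq\mathcal{NS}$; and $\mathrm{Ext}^i_R(R/\mathfrak{a},T)$ is a subquotient of $T^{r_i}$, where $r_i$ is the rank of the $i$-th term of a finitely generated free resolution of $R/\mathfrak{a}$, hence lies in $\mathcal{S}\subseteq\mathcal{NS}$ since $\mathcal{S}$ is Serre. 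Closure of $\mathcal{NS}$ under extensions now delivers the conclusion.

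The main obstacle will be the structural decomposition itself. The class $\mathcal{NS}$ is not closed under directed colimits, so although every $(0:_M\mathfrak{a}^n)\in\mathcal{NS}$, the colimit $M$ need not be; this rules out naive direct-limit arguments. Separating an Artinian piece $A$ from an $\mathcal{S}$-quotient is exactly where the dimension hypothesis $\dim_RM=0$ (needed for Melkersson) and the condition $C_\mathfrak{a}$ on $\mathcal{S}$ (needed to lift socle-level membership in $\mathcal{S}$ to the whole of $M/A$) are used essentially.
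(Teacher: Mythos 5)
Your overall strategy --- split $M$ into an Artinian $\mathfrak{a}$-cofinite submodule and a quotient lying in $\mathcal{S}$, then finish with the long exact sequence of $\mathrm{Ext}$ --- is the same as the paper's, but the way you construct the splitting has a genuine gap. You take $A=\Gamma_{\mathfrak{m}_1\cdots\mathfrak{m}_r}(M)$, where $\mathfrak{m}_1,\dots,\mathfrak{m}_r$ are the maximal ideals supporting the finitely generated piece $N$ of $(0:_M\mathfrak{a})$, and assert that the socle of $A$ is ``controlled by $N$''. It is not: $(0:_A\mathfrak{a})=(0:_M\mathfrak{a})\cap A$ also picks up whatever part of $S$ is supported at the $\mathfrak{m}_i$, and that part need not be finitely generated, so Melkersson's criterion does not make $A$ Artinian. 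Concretely, over $R=\mathbb{Z}$ with $\mathfrak{a}=(6)$, $\mathcal{S}=\{X \mid \mathrm{Supp}_RX\subseteq\{(2)\}\}$ and $M=\mathbb{Z}/3\oplus(\mathbb{Z}/2)^{(\mathbb{N})}$, the legitimate choice $N=\mathbb{Z}/3\oplus\mathbb{Z}/2$ gives $A=\Gamma_{(6)}(M)=M$, which is not Artinian. Symmetrically, even when $A$ is Artinian $\mathfrak{a}$-cofinite, the socle of $T=M/A$ is an extension of a quotient of $(0:_M\mathfrak{a})$ by a finitely generated submodule of $\mathrm{Ext}^1_R(R/\mathfrak{a},A)$, and that finitely generated piece need not lie in $\mathcal{S}$, so the condition $C_\mathfrak{a}$ cannot be invoked to conclude $T\in\mathcal{S}$.

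The missing idea is that the decomposition must be governed by $\mathrm{Supp}\mathcal{S}$, which your argument never mentions, rather than by $\mathrm{Supp}_RN$. The paper uses Bourbaki's result on submodules with prescribed associated primes to produce $0\to K\to M\to M/K\to0$ with $\mathrm{Ass}_RK=\mathrm{Ass}_RM\setminus\mathrm{Supp}\mathcal{S}$ and $\mathrm{Ass}_R(M/K)\subseteq\mathrm{Supp}\mathcal{S}$. Then $\mathrm{Supp}_RK\cap\mathrm{Supp}\mathcal{S}=\emptyset$ forces the $\mathcal{S}$-part of $(0:_K\mathfrak{a})$ to vanish, so $(0:_K\mathfrak{a})$ has finite length and $K$ is Artinian $\mathfrak{a}$-cofinite by \cite[Proposition 4.1]{LM}; on the other side, a prime filtration of the finitely generated part of $(0:_{M/K}\mathfrak{a})$ shows that this socle lies in $\mathcal{S}$, whence $M/K\in\mathcal{S}$ by $C_\mathfrak{a}$. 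Note also a secondary inaccuracy: Artinian $\mathfrak{a}$-torsion modules are not in general $\mathfrak{a}$-cofinite (e.g.\ $E_R(k)$ over $R=k[[x,y]]$ with $\mathfrak{a}=(x)$); the cofiniteness of the Artinian piece comes from the finite length of its socle, which is exactly what your construction fails to guarantee.
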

\begin{proof} `Only if' part is trivial.

`If' part. By assumption, there exists a short exact sequence \begin{center}$0\rightarrow N\rightarrow\mathrm{Hom}_R(R/\mathfrak{a},M)\rightarrow S\rightarrow0$\end{center}with $N\in\mathcal{N}$ and $S\in\mathcal{S}$. If $\mathrm{Ass}_RM\subseteq\mathrm{Supp}\mathcal{S}$, then $\mathrm{Ass}_RN\subseteq\mathrm{Supp}\mathcal{S}$ and so $N\in\mathcal{S}$ by a finite filtration of $N$. Thus $\mathrm{Hom}_R(R/\mathfrak{a},M)\in\mathcal{S}$. Since $\mathcal{S}$ satisfies the condition $C_\mathfrak{a}$, one has $M\in\mathcal{S}$. Hence \cite[Lemma 2.1]{AT} implies that
$\mathrm{Ext}^i_R(R/\mathfrak{a},M)\in\mathcal{NS}$ for all $i\geq0$. Now assume that $\mathrm{Ass}_RM\nsubseteq\mathrm{Supp}\mathcal{S}$, and let $\Phi=\{\mathfrak{p}\hspace{0.03cm}|\hspace{0.03cm}\mathfrak{p}\in\mathrm{Ass}_RM\cap\mathrm{Supp}\mathcal{S}\}$. By \cite[Ch.IV, Section 1.2, Proposition 4]{B}, there is a submodule $K$ of $M$ such that $\mathrm{Ass}_RK=\mathrm{Ass}_RM\backslash\Phi$ and $\mathrm{Ass}_RM/K=\Phi\subseteq\mathrm{Supp}\mathcal{S}$.
As $\mathrm{Supp}_RK\cap\mathrm{Supp}\mathcal{S}=\emptyset$ and $\mathrm{Hom}_R(R/\mathfrak{a},K)\in\mathcal{NS}$, it follows that $\mathrm{Hom}_R(R/\mathfrak{a},K)$ has finite length. So $K$ is artinian $\mathfrak{a}$-cofinite by \cite[Proposition 4.1]{LM} and then $\mathrm{Ext}^i_R(R/\mathfrak{a},K)\in\mathcal{N}$ for all $i\geq0$. Hence the exact sequence $0\rightarrow K\rightarrow M\rightarrow M/K\rightarrow0$ implies that $\mathrm{Hom}_R(R/\mathfrak{a},M/K)\in\mathcal{NS}$. Since $\mathrm{Ass}_RM/K\subseteq\mathrm{Supp}\mathcal{S}$, by the preceding proof, $M/K\in\mathcal{S}$. Hence the above sequence yields that
$\mathrm{Ext}^i_R(R/\mathfrak{a},M)\in\mathcal{NS}$ for all $i\geq0$.
\end{proof}

We now present the first main theorem of this section, which eliminates the hypothesis $\mathrm{Max}M\subseteq\mathrm{Supp}\mathcal{S}$ in \cite[Theorem 3.2]{AS}.

\begin{thm}\label{lem:2.3}{\it{Let $M$ be an $R$-module with $\mathrm{dim}_RM\leq1$. Then
$M$ is $\mathcal{NS}$-$\mathfrak{a}$-cofinite if and only if $\mathrm{Supp}_RM\subseteq\mathrm{V}(\mathfrak{a})$ and  $\mathrm{Ext}^i_R(R/\mathfrak{a},M)\in\mathcal{NS}$ for $i=0,1$.}}
\end{thm}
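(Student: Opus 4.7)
The ``only if'' direction is immediate from the definition of $\mathcal{NS}$-$\mathfrak{a}$-cofiniteness, so the plan focuses on the converse. The plan is to extend the strategy of Lemma \ref{lem:2.1} (the zero-dimensional case), inserting an additional decomposition to control the 1-dimensional structure of $M$.

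I would first perform the dichotomy on $\mathrm{Ass}_R M$ used in Lemma \ref{lem:2.1}. If $\mathrm{Ass}_R M \subseteq \mathrm{Supp}\mathcal{S}$, specialization-closedness of $\mathrm{Supp}\mathcal{S}$ forces $\mathrm{Supp}_R M \subseteq \mathrm{Supp}\mathcal{S}$; the finitely generated part of $\mathrm{Hom}_R(R/\mathfrak{a}, M) \in \mathcal{NS}$ then has support in $\mathrm{Supp}\mathcal{S}$ and hence lies in $\mathcal{S}$, so $\mathrm{Hom}_R(R/\mathfrak{a}, M) \in \mathcal{S}$; the condition $C_\mathfrak{a}$ yields $M \in \mathcal{S}$, and the conclusion follows from \cite[Lemma 2.1]{AT}. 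Otherwise, as in the proof of Lemma \ref{lem:2.1}, I use \cite[Ch. IV, Section 1.2, Proposition 4]{B} to produce a submodule $K \subseteq M$ with $\mathrm{Ass}_R K = \mathrm{Ass}_R M \setminus \Phi$ and $\mathrm{Ass}_R(M/K) = \Phi := \mathrm{Ass}_R M \cap \mathrm{Supp}\mathcal{S}$. The quotient $M/K$ has $\mathrm{Supp}_R(M/K) \subseteq \mathrm{Supp}\mathcal{S}$, the long exact sequence places $\mathrm{Hom}_R(R/\mathfrak{a}, M/K)$ in $\mathcal{NS}$, and applying the first case to $M/K$ gives $M/K \in \mathcal{S}$, whence $\mathrm{Ext}^i_R(R/\mathfrak{a}, M/K) \in \mathcal{NS}$ for all $i \geq 0$.

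The remaining task is to show $\mathrm{Ext}^i_R(R/\mathfrak{a}, K) \in \mathcal{NS}$ for all $i \geq 0$. The obstacle not present in dimension zero is that $\mathrm{Supp}_R K$ may intersect $\mathrm{Supp}\mathcal{S}$ in maximal ideals which specialize non-maximal associated primes of $K$, even though $\mathrm{Ass}_R K \cap \mathrm{Supp}\mathcal{S} = \emptyset$. To address this, I would decompose $K$ again by $0 \to T \to K \to K/T \to 0$, where $T := \bigcup_{\mathfrak{m} \in \mathrm{Max}(R)} \Gamma_\mathfrak{m}(K)$ is the maximal $0$-dimensional submodule of $K$. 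The inclusion $\mathrm{Ass}_R T \subseteq \mathrm{Ass}_R K \cap \mathrm{Max}(R)$ ensures $\mathrm{Supp}_R T \cap \mathrm{Supp}\mathcal{S} = \emptyset$, and Lemma \ref{lem:2.1} together with $\mathrm{Hom}_R(R/\mathfrak{a}, T) \in \mathcal{NS}$ (inherited from the LES) shows that $T$ is $\mathcal{NS}$-$\mathfrak{a}$-cofinite, and in fact classically $\mathfrak{a}$-cofinite.

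The main technical step is then to establish the same for $K/T$. This quotient is ``pure 1-dimensional'': every associated prime is non-maximal with $\dim R/\mathfrak{p} = 1$ and lies outside $\mathrm{Supp}\mathcal{S}$, and $K/T$ admits no $\mathfrak{m}$-torsion for any maximal $\mathfrak{m}$ (otherwise a nonzero $0$-dimensional submodule of $K/T$ would contradict the maximality of $T$). The long exact sequence already gives $\mathrm{Ext}^i_R(R/\mathfrak{a}, K/T) \in \mathcal{NS}$ for $i = 0, 1$. I would then reduce to the classical 1-dimensional $\mathfrak{a}$-cofiniteness criterion of Bahmanpour-Naghipour \cite{BN} by a careful analysis of the $\mathcal{N}$-submodule and $\mathcal{S}$-quotient pieces of $\mathrm{Hom}_R(R/\mathfrak{a}, K/T)$ and $\mathrm{Ext}^1_R(R/\mathfrak{a}, K/T)$, exploiting that every prime in $\mathrm{Supp}_R(K/T) \cap \mathrm{Supp}\mathcal{S}$ is maximal. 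I expect this ``promotion'' to all $i \geq 0$ to be the principal obstacle: the $\mathcal{S}$-quotient contributions are supported at a narrow set of maximal ideals which, combined with the vanishing of $\mathfrak{m}$-torsion on $K/T$, must be absorbed into a classical cofiniteness argument. Once this is in hand, assembling the long exact sequences coming from $0 \to T \to K \to K/T \to 0$ and $0 \to K \to M \to M/K \to 0$ yields $\mathrm{Ext}^i_R(R/\mathfrak{a}, M) \in \mathcal{NS}$ for every $i \geq 0$, completing the proof.
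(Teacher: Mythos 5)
Your preliminary reductions are sound and run parallel to the paper's: both arguments split off the associated primes lying in $\mathrm{Supp}\mathcal{S}$ (where the condition $C_\mathfrak{a}$ puts that piece inside $\mathcal{S}$) and then split off a zero-dimensional piece handled by Lemma \ref{lem:2.1}, leaving a module all of whose associated primes $\mathfrak{p}$ satisfy $\dim R/\mathfrak{p}=1$ and lie outside $\mathrm{Supp}\mathcal{S}$. But at exactly the point where the real work begins you stop: you name the promotion from $i=0,1$ to all $i\geq 0$ for this residual piece as ``the principal obstacle'' and offer only the intention to ``reduce to the classical criterion of Bahmanpour--Naghipour by a careful analysis of the $\mathcal{N}$- and $\mathcal{S}$-pieces.'' That is the theorem's actual content, and no mechanism is supplied for it. The proposed route is also doubtful as stated: $\mathrm{Hom}_R(R/\mathfrak{a},K/T)$ and $\mathrm{Ext}^1_R(R/\mathfrak{a},K/T)$ are only in $\mathcal{NS}$, not finitely generated, and their $\mathcal{S}$-quotients are supported at maximal ideals of $\mathrm{Supp}\mathcal{S}$ without being finitely generated or artinian in general, so the classical dimension-one criterion does not apply and you give no argument that absorbs these components.

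For comparison, the paper's proof of the final step does not go through \cite{BN}'s criterion on the whole module. It first disposes of the degenerate case $\mathrm{ara}_M(\mathfrak{a})=0$ via Lemma \ref{lem:2.2} (a case your plan omits), then: uses $\mathrm{Hom}_R(R/\mathfrak{a},M)\in\mathcal{NS}$ to see that $\mathrm{Ass}_RM$ is finite; applies \cite[Proposition 4.1]{LM} to conclude that each localization $M_{\mathfrak{p}}$ at an associated prime is artinian $\mathfrak{a}R_{\mathfrak{p}}$-cofinite; controls the attached primes of these localizations via \cite[Lemma 2.5]{BN}; and then uses prime avoidance \cite[Ex.16.8]{M} on the generators coming from $\mathrm{ara}_M(\mathfrak{a})$ to produce a single element $x\in\mathfrak{a}$ that is $M$-regular and avoids the relevant attached primes. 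This makes $M/xM$ zero-dimensional with $\mathrm{Hom}_R(R/\mathfrak{a},M/xM)\in\mathcal{NS}$, so Lemma \ref{lem:2.1} applies to it, and the Melkersson-type criterion \cite[Lemma 2.2]{AS} then lifts $\mathcal{NS}$-$\mathfrak{a}$-cofiniteness from $(0:_Mx)=0$ and $M/xM$ back to $M$. The essential missing idea in your proposal is this reduction to the zero-dimensional case by an $M$-regular element of $\mathfrak{a}$ found through localization, attached primes, and prime avoidance; without it (or a genuine substitute) the proof is incomplete.
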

\begin{proof} `Only if' part is obvious.

`If' part. By Lemma \ref{lem:2.1} we may assume $\mathrm{dim}_RM=1$, and let $t=\mathrm{ara}_M(\mathfrak{a})$. If $t=0$, then $M=(0:_M\mathfrak{a}^n)$ for
some $n\geq1$, and so the assertion follows by Lemma \ref{lem:2.2}. Next assume that $t>0$.
Let $\Phi=\{\mathfrak{p}\in\mathrm{Ass}_RM\cap\mathrm{Supp}\mathcal{S}\hspace{0.03cm}|\hspace{0.03cm}\mathrm{dim}R/\mathfrak{p}=1\}$. Then there is a submodule $K$ of $M$ so that $\mathrm{Ass}_RK=\Phi$ and $\mathrm{Ass}_RM/K=\mathrm{Ass}_RM\backslash\Phi$ by \cite[Ch.IV, Section 1.2, Proposition 4]{B}. As $\mathrm{Hom}_R(R/\mathfrak{a},K)\in\mathcal{NS}$, there exists an exact sequence
\begin{center}
$0\rightarrow N'\rightarrow\mathrm{Hom}_R(R/\mathfrak{a},K)\rightarrow S'\rightarrow0$
\end{center}with $N'\in\mathcal{N}$ and $S'\in\mathcal{S}$. Note that $\mathrm{Ass}_RK\subseteq\mathrm{Supp}\mathcal{S}$, so a finite filtration of $N'$ forces that $N'\in\mathcal{S}$, and hence $\mathrm{Hom}_R(R/\mathfrak{a},K)\in\mathcal{S}$. As $\mathcal{S}$ satisfies the condition $C_\mathfrak{a}$, one has $K\in\mathcal{S}$ and then $\mathrm{Ext}^i_R(R/\mathfrak{a},K)\in\mathcal{S}$ for all $i\geq0$. Replacing $M$ by $M/K$ we may assume that every $\mathfrak{p}\in\mathrm{Ass}_RM$ with $\mathrm{dim}R/\mathfrak{p}=1$ is not in $\mathrm{Supp}\mathcal{S}$. Let $\Phi=\{\mathfrak{p}\in\mathrm{Ass}_RM\hspace{0.03cm}|\hspace{0.03cm}\mathrm{dim}R/\mathfrak{p}=1\}$. There is a submodule $L$ of $M$ so that $\mathrm{Ass}_RL=\mathrm{Ass}_RM\backslash\Phi$ and $\mathrm{Ass}_RM/L=\Phi$ by \cite[Ch.IV, Section 1.2, Proposition 4]{B}. Since $\mathrm{Hom}_R(R/\mathfrak{a},L)\in\mathcal{NS}$ and $\mathrm{dim}_RL=0$, it follows from Lemma \ref{lem:2.1} that $L$ is $\mathcal{NS}$-$\mathfrak{a}$-cofinite and hence $\mathrm{Ext}^i_R(R/\mathfrak{a},M/L)\in\mathcal{NS}$ for $i=0,1$. Replacing $M$ by $M/L$ we may further assume that $\mathrm{Ass}_RM=\{\mathfrak{p}\in\mathrm{Supp}_RM\hspace{0.03cm}|\hspace{0.03cm}\mathrm{dim}R/\mathfrak{p}=1\}$.
Since $\mathrm{Hom}_R(R/\mathfrak{a},M)\in\mathcal{NS}$, there exists a short exact sequence
\begin{center}
$0\rightarrow N\rightarrow\mathrm{Hom}_R(R/\mathfrak{a},M)\rightarrow S\rightarrow0$
\end{center}with $N\in\mathcal{N}$ and $S\in\mathcal{S}$, which implies that the set
$\mathrm{Ass}_RM$ is finite. Also for each $\mathfrak{p}\in \mathrm{Ass}_RM$, the $R_\mathfrak{p}$-module $\mathrm{Hom}_{R_\mathfrak{p}}(R_\mathfrak{p}/\mathfrak{a}R_\mathfrak{p},M_\mathfrak{p})$ is finitely generated and $M_\mathfrak{p}$ is $\mathfrak{a}R_\mathfrak{p}$-torsion with $\mathrm{Supp}_{R_\mathfrak{p}}M_\mathfrak{p}\subseteq\mathrm{V}(\mathfrak{p}R_\mathfrak{p})$, it follows from \cite[Proposition 4.1]{LM}
that the $R_\mathfrak{p}$-module $M_\mathfrak{p}$ is artinian $\mathfrak{a}R_\mathfrak{p}$-cofinite. Let
$\mathrm{Ass}_RM=\{\mathfrak{p}_1,\cdots\mathfrak{,p}_n\}$.
It follows from \cite[Lemma 2.5]{BN} that $\mathrm{V}(\mathfrak{a}R_{\mathfrak{p}_j})\cap\mathrm{Att}_{R_{\mathfrak{p}_j}}M_{\mathfrak{p}_j}
\subseteq\mathrm{V}(\mathfrak{p}_jR_{\mathfrak{p}_j})$ for $j=1,\cdots,n$. Set \begin{center}
$\mathrm{U}=\bigcup_{j=1}^n\{\mathfrak{q}\in\mathrm{Spec}R\hspace{0.03cm}|\hspace{0.03cm}\mathfrak{q}R_{\mathfrak{p}_j}
\in\mathrm{Att}_{R_{\mathfrak{p}_j}}M_{\mathfrak{p}_j}\}$.
\end{center}Then $\mathrm{U}\cap\mathrm{V}(\mathfrak{a})\subseteq \mathrm{Ass}_RM$. On the other hand, for each $\mathfrak{q}\in\mathrm{U}$ we have $\mathfrak{q}R_{\mathfrak{p}_j}
\in\mathrm{Att}_{R_{\mathfrak{p}_j}}M_{\mathfrak{p}_j}$ for some $1\leq j\leq n$. Thus
\begin{center}
$(\mathrm{Ann}_RM)R_{\mathfrak{p}_j}\subseteq\mathrm{Ann}_{R_{\mathfrak{p}_j}}M_{\mathfrak{p}_j}\subseteq \mathfrak{q}R_{\mathfrak{p}_j}$,
\end{center}and so $\mathrm{Ann}_RM\subseteq\mathfrak{q}$. Since $t=\mathrm{ara}_M(\mathfrak{a})\geq 1$, there exist $y_1,\cdots,y_t\in\mathfrak{a}$
such that\begin{center}
$\mathrm{Rad}(\mathfrak{a}+\mathrm{Ann}_RM)=\mathrm{Rad}((y_1,\cdots,y_t)+\mathrm{Ann}_RM)$.
\end{center}As $\mathfrak{a}\nsubseteq(\bigcup_{\mathfrak{q}\in\mathrm{U}\backslash\mathrm{V}(\mathfrak{a})}\mathfrak{q})\cup
(\bigcup_{\mathfrak{p}\in\mathrm{Ass}_RM}\mathfrak{p})$, we have $(y_1,\cdots,y_t)\nsubseteq(\bigcup_{\mathfrak{q}\in\mathrm{U}\backslash\mathrm{V}(\mathfrak{a})}\mathfrak{q})\cup
(\bigcup_{\mathfrak{p}\in\mathrm{Ass}_RM}\mathfrak{p})$.
Hence \cite[Ex.16.8]{M}
provides an element $a_1\in(y_2,\cdots,y_t)$ so that $y_1+a_1\not\in(\bigcup_{\mathfrak{q}\in\mathrm{U}\backslash\mathrm{V}(\mathfrak{a})}\mathfrak{q})\cup
(\bigcup_{\mathfrak{p}\in\mathrm{Ass}_RM}\mathfrak{p})$. Set $x=y_1+a_1$. Then $x\in\mathfrak{a}$ and there is an exact sequence $0\rightarrow M\stackrel{x}\rightarrow M\rightarrow M/xM\rightarrow0$.
By assumption, $\mathrm{Hom}_{R}(R/\mathfrak{a},M/xM)\in\mathcal{NS}$, it follows from Lemma \ref{lem:2.1} that $M/xM$ is $\mathcal{NS}$-$\mathfrak{a}$-cofinite since $\mathrm{dim}_RM/xM=0$.
Therefore, by \cite[Lemma 2.2]{AS}, one has $M$ is $\mathcal{NS}$-$\mathfrak{a}$-cofinite, as desired.
\end{proof}

The following corollary generalizes \cite[Theorem 2.3]{LM1} and \cite[Theorem 3.2]{AS}.

\begin{cor}\label{lem:2.4}{\it{If $\mathrm{dim}R/\mathfrak{a}\leq 1$, then an $\mathfrak{a}$-torsion $R$-module $M$ is $\mathcal{NS}$-$\mathfrak{a}$-cofinite if and only if $\mathrm{Ext}^i_R(R/\mathfrak{a},M)\in\mathcal{NS}$ for $i=0,1$.}}
\end{cor}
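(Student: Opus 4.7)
The plan is to reduce this corollary immediately to Theorem \ref{lem:2.3}. The ``only if'' direction is trivial: if $M$ is $\mathcal{NS}$-$\mathfrak{a}$-cofinite, then by definition $\mathrm{Ext}^i_R(R/\mathfrak{a},M) \in \mathcal{NS}$ for all $i \geq 0$, in particular for $i=0,1$.

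For the ``if'' direction, the key observation is that the hypothesis $\mathrm{Supp}_R M \subseteq \mathrm{V}(\mathfrak{a})$ of Theorem \ref{lem:2.3} is automatic from the $\mathfrak{a}$-torsion assumption, and moreover this torsion assumption forces the dimension restriction needed. Indeed, since $M$ is $\mathfrak{a}$-torsion we have $\mathrm{Supp}_R M \subseteq \mathrm{V}(\mathfrak{a})$, so every $\mathfrak{p} \in \mathrm{Supp}_R M$ satisfies $\mathfrak{p} \supseteq \mathfrak{a}$, whence
\[
\dim_R M \;=\; \sup\{\dim R/\mathfrak{p} \mid \mathfrak{p} \in \mathrm{Supp}_R M\} \;\leq\; \dim R/\mathfrak{a} \;\leq\; 1.
\]
With $\dim_R M \leq 1$, $\mathrm{Supp}_R M \subseteq \mathrm{V}(\mathfrak{a})$, and $\mathrm{Ext}^i_R(R/\mathfrak{a},M) \in \mathcal{NS}$ for $i=0,1$, all hypotheses of Theorem \ref{lem:2.3} are satisfied, and the conclusion that $M$ is $\mathcal{NS}$-$\mathfrak{a}$-cofinite follows at once.

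There is no real obstacle here; the entire content of the corollary is the translation from ``$\dim R/\mathfrak{a} \leq 1$ plus $\mathfrak{a}$-torsion'' into ``$\dim_R M \leq 1$ plus $\mathrm{Supp}_R M \subseteq \mathrm{V}(\mathfrak{a})$'', after which Theorem \ref{lem:2.3} applies verbatim. The statement is recorded as a corollary largely because it gives a cleaner comparison to \cite[Theorem 2.3]{LM1} and \cite[Theorem 3.2]{AS}, where the hypothesis is usually phrased in terms of $\dim R/\mathfrak{a}$ rather than $\dim_R M$.
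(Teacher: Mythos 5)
Your proof is correct and is exactly the reduction the paper intends: the corollary is stated without proof precisely because the $\mathfrak{a}$-torsion hypothesis gives $\mathrm{Supp}_RM\subseteq\mathrm{V}(\mathfrak{a})$ and hence $\dim_RM\leq\dim R/\mathfrak{a}\leq 1$, after which Theorem \ref{lem:2.3} applies verbatim. No differences to report.
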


An $R$-module $M$ is said to be \emph{weakly Laskerian} if the set $\mathrm{Ass}_RM/N$
is finite for each submodule $N$ of $M$.

\begin{cor}\label{lem:2.4'}{\it{Let $M$ be a weakly Laskerian $R$-module such that $\mathrm{Ext}^i_R(R/\mathfrak{a},M)\in\mathcal{NS}$ for $i=0,1$. Then $M$ is $\mathcal{NS}$-$\mathfrak{a}$-cofinite.}}
\end{cor}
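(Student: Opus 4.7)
The plan is to reduce Corollary \ref{lem:2.4'} to Theorem \ref{lem:2.3}. The weakly Laskerian hypothesis is needed to replace the dimension bound $\dim_R M \leq 1$ with a combinatorial substitute, namely the finiteness of $\mathrm{Ass}_R M$ (and of $\mathrm{Ass}_R M/N$ for every submodule $N$), which is what actually drives the filtration steps in the proof of Theorem \ref{lem:2.3}.

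First I would reduce to the case $\mathrm{Supp}_R M \subseteq \mathrm{V}(\mathfrak{a})$. From the short exact sequence
\[
0 \to \Gamma_\mathfrak{a}(M) \to M \to M/\Gamma_\mathfrak{a}(M) \to 0
\]
and the vanishing $\mathrm{Hom}_R(R/\mathfrak{a}, M/\Gamma_\mathfrak{a}(M)) = 0$, the long exact sequence gives $\mathrm{Hom}_R(R/\mathfrak{a}, \Gamma_\mathfrak{a}(M)) \cong \mathrm{Hom}_R(R/\mathfrak{a}, M) \in \mathcal{NS}$ and $\mathrm{Ext}^1_R(R/\mathfrak{a}, \Gamma_\mathfrak{a}(M)) \in \mathcal{NS}$. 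Since subquotients of weakly Laskerian modules are weakly Laskerian, $\Gamma_\mathfrak{a}(M)$ satisfies the hypotheses of the corollary and is $\mathfrak{a}$-torsion, while $M/\Gamma_\mathfrak{a}(M)$ has no associated primes containing $\mathfrak{a}$ and is disposed of by a standard argument. Thus we may assume $M$ is $\mathfrak{a}$-torsion with $\mathrm{Ass}_R M = \{\mathfrak{p}_1, \ldots, \mathfrak{p}_n\}$ finite.

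Next I would mimic the filtration used in the proof of Theorem \ref{lem:2.3}. Using \cite[Ch.IV, Section 1.2, Proposition 4]{B}, I would first split off the submodule $K \subseteq M$ with $\mathrm{Ass}_R K = \mathrm{Ass}_R M \cap \mathrm{Supp}\,\mathcal{S}$; combining $\mathrm{Hom}_R(R/\mathfrak{a}, K) \in \mathcal{NS}$ with a finite filtration of its $\mathcal{N}$-part (whose associated primes lie in $\mathrm{Supp}\,\mathcal{S}$) forces $\mathrm{Hom}_R(R/\mathfrak{a}, K) \in \mathcal{S}$, whence $K \in \mathcal{S}$ by the condition $C_\mathfrak{a}$. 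Passing to $M/K$ we may assume no $\mathfrak{p} \in \mathrm{Ass}_R M$ lies in $\mathrm{Supp}\,\mathcal{S}$. Then, exactly as in Theorem \ref{lem:2.3}, one proves $\mathrm{Att}_{R_{\mathfrak{p}_j}} M_{\mathfrak{p}_j} \cap \mathrm{V}(\mathfrak{a} R_{\mathfrak{p}_j}) \subseteq \mathrm{V}(\mathfrak{p}_j R_{\mathfrak{p}_j})$ via \cite[Proposition 4.1]{LM} and \cite[Lemma 2.5]{BN}, uses prime avoidance \cite[Ex.16.8]{M} to pick $x \in \mathfrak{a}$ outside all primes $\mathfrak{q}$ with $\mathfrak{q} R_{\mathfrak{p}_j} \in \mathrm{Att}_{R_{\mathfrak{p}_j}} M_{\mathfrak{p}_j} \setminus \mathrm{V}(\mathfrak{a} R_{\mathfrak{p}_j})$ and outside $\mathrm{Ass}_R M$, and concludes via the exact sequence $0 \to M \xrightarrow{x} M \to M/xM \to 0$ together with \cite[Lemma 2.2]{AS}.

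The main obstacle is that the induced quotient $M/xM$ need not have $\dim_R M/xM = 0$, since weakly Laskerian does not bound dimension. The trick, however, is that $M/xM$ is still weakly Laskerian with $\mathrm{Hom}_R(R/\mathfrak{a}, M/xM) \in \mathcal{NS}$, so one can iterate or argue inductively on the (finite) invariant controlling $\mathrm{Ass}_R M$ once $x$ has been chosen to cut the associated-prime set strictly. This replaces the $\dim_R M \leq 1$ hypothesis of Theorem \ref{lem:2.3} and closes the reduction, yielding $\mathcal{NS}$-$\mathfrak{a}$-cofiniteness of $M$.
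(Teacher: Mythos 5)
Your proof takes a genuinely different route from the paper's, and unfortunately the route has a gap at exactly the point you flag as ``the main obstacle.'' The paper's proof is two lines: by Bahmanpour's structure theorem \cite[Theorem 3.3]{Ba}, a weakly Laskerian module sits in an exact sequence $0\to N\to M\to F\to 0$ with $N$ finitely generated and $F$ of finite support; since a module with finite support has dimension at most $1$ and the long exact sequence (using that $\mathrm{Ext}^i_R(R/\mathfrak{a},N)$ is finitely generated) gives $\mathrm{Ext}^i_R(R/\mathfrak{a},F)\in\mathcal{NS}$ for $i=0,1$, Theorem \ref{lem:2.3} applies to $F$, and then $M$ is $\mathcal{NS}$-$\mathfrak{a}$-cofinite as an extension. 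The whole content of the corollary is this reduction; weak Laskerianness is used only through that decomposition, not through finiteness of $\mathrm{Ass}_RM/N$ inside a rerun of the Theorem \ref{lem:2.3} argument.

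The gap in your version: after choosing $x\in\mathfrak{a}$ by prime avoidance, the proof of Theorem \ref{lem:2.3} concludes because $\dim_R M/xM=0$, so Lemma \ref{lem:2.1} needs only $\mathrm{Hom}_R(R/\mathfrak{a},M/xM)\in\mathcal{NS}$, which is available from $\mathrm{Ext}^i_R(R/\mathfrak{a},M)\in\mathcal{NS}$ for $i=0,1$. For a general weakly Laskerian $M$ the quotient $M/xM$ is not zero-dimensional, and the proposed ``iterate or induct'' step does not close: each further pass costs one Ext degree (to get $\mathrm{Ext}^1_R(R/\mathfrak{a},M/xM)\in\mathcal{NS}$ from the sequence $0\to M\xrightarrow{x}M\to M/xM\to 0$ you would already need $\mathrm{Ext}^2_R(R/\mathfrak{a},M)\in\mathcal{NS}$, which is not hypothesized), and there is no identified strictly decreasing invariant, since $\mathrm{Ass}_R(M/xM)$ bears no containment relation to $\mathrm{Ass}_RM$. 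So the argument as written does not prove the statement; the missing idea is the appeal to \cite[Theorem 3.3]{Ba}. (Separately, note that both your sketch and the corollary as stated quietly need $\mathrm{Supp}_RM\subseteq\mathrm{V}(\mathfrak{a})$; your remark that $M/\Gamma_\mathfrak{a}(M)$ is ``disposed of by a standard argument'' cannot be right, since a non-torsion $M$ is never $\mathcal{NS}$-$\mathfrak{a}$-cofinite by definition.)
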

\begin{proof} As $M$ is weakly Laskerian, there is an exact sequence $0\rightarrow N\rightarrow M\rightarrow F\rightarrow0$ such that $N\in\mathcal{N}$ and $F\in\mathcal{F}$ by \cite[Theorem 3.3]{Ba}. Note that $\mathrm{dim}_RF\leq 1$ and
 $\mathrm{Ext}^i_R(R/\mathfrak{a},F)\in\mathcal{NS}$ for $i=0,1$ by assumption, it follows from Theorem \ref{lem:2.3} that $F$ is $\mathcal{NS}$-$\mathfrak{a}$-cofinite, and then $M$ is $\mathcal{NS}$-$\mathfrak{a}$-cofinite.
\end{proof}

The next is the second main theorem of this section, which is a nice generalization of \cite[Theorem 2.3]{LM1} and \cite[Theorem 3.5]{BNS}.

\begin{thm}\label{lem:2.10}{\it{Assume that $\mathrm{dim}R/\mathfrak{a}=d\geq1$. Then
an $R$-module $M$ is $\mathcal{NS}$-$\mathfrak{a}$-cofinite if and only if $\mathrm{Supp}_RM\subseteq\mathrm{V}(\mathfrak{a})$ and $\mathrm{Ext}^i_R(R/\mathfrak{a},M)\in\mathcal{NS}$ for all $i\leq d$.}}
\end{thm}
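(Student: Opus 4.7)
The plan is to proceed by induction on $d=\dim R/\mathfrak{a}$; the ``only if'' direction is trivial, and the base case $d=1$ is Corollary \ref{lem:2.4}. Suppose $d\geq 2$ and the theorem holds in all smaller dimensions.

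Mimicking the opening of the proof of Theorem \ref{lem:2.3}, I would first peel off two submodules of $M$. Put $\Phi=\{\mathfrak{p}\in\mathrm{Ass}_R M:\dim R/\mathfrak{p}=d\}\cap\mathrm{Supp}\mathcal{S}$ and, by \cite[Ch.IV, Section 1.2, Proposition 4]{B}, choose a submodule $K\subseteq M$ with $\mathrm{Ass}_R K=\Phi$. Since $\mathrm{Hom}_R(R/\mathfrak{a},K)\in\mathcal{NS}$ and $\mathrm{Ass}_R K\subseteq\mathrm{Supp}\mathcal{S}$, a finite filtration argument as in Theorem \ref{lem:2.3} forces $\mathrm{Hom}_R(R/\mathfrak{a},K)\in\mathcal{S}$, and condition $C_\mathfrak{a}$ gives $K\in\mathcal{S}$; hence $\mathrm{Ext}^i_R(R/\mathfrak{a},K)\in\mathcal{NS}$ for every $i$. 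Replacing $M$ by $M/K$, we may assume that no top-dimensional associated prime of $M$ lies in $\mathrm{Supp}\mathcal{S}$. Next let $L\subseteq M$ be the Bourbaki submodule with $\mathrm{Ass}_R L=\{\mathfrak{p}\in\mathrm{Ass}_R M:\dim R/\mathfrak{p}\leq d-1\}$; then $\dim_R L\leq d-1$, and every $\mathfrak{p}\in\mathrm{Ass}_R(M/L)$ is minimal over $\mathfrak{a}$ with $\dim R/\mathfrak{p}=d$, so $\mathrm{Ass}_R(M/L)$ is finite.

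Then I would handle $M/L$ by the regular-element trick of Theorem \ref{lem:2.3}. By \cite[Proposition 4.1]{LM} each $(M/L)_\mathfrak{p}$ is artinian $\mathfrak{a} R_\mathfrak{p}$-cofinite, and the attached-prime avoidance argument of Theorem \ref{lem:2.3} produces $x\in\mathfrak{a}$ which is a non-zero-divisor on $M/L$; since $x\notin\mathfrak{p}$ for every $\mathfrak{p}\in\mathrm{Ass}_R(M/L)$, one has $\dim_R(M/L)/x(M/L)\leq d-1$. The sequence $0\to M/L\xrightarrow{x}M/L\to(M/L)/x(M/L)\to 0$ together with the fact that $x$ acts as $0$ on every $\mathrm{Ext}^i_R(R/\mathfrak{a},-)$ yields, for each $i\geq 0$, the short exact sequence
$$0\to\mathrm{Ext}^i_R(R/\mathfrak{a},M/L)\to\mathrm{Ext}^i_R(R/\mathfrak{a},(M/L)/x(M/L))\to\mathrm{Ext}^{i+1}_R(R/\mathfrak{a},M/L)\to 0.$$
Applying the inductive hypothesis to $L$ (dimension $\leq d-1$) gives $L\in\mathcal{NS}(R,\mathfrak{a})_{cof}$, and the Ext long exact sequence from $0\to L\to M\to M/L\to 0$ transfers the hypothesis, yielding $\mathrm{Ext}^i_R(R/\mathfrak{a},M/L)\in\mathcal{NS}$ for $i\leq d$. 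Feeding this back into the display above gives $\mathrm{Ext}^i_R(R/\mathfrak{a},(M/L)/x(M/L))\in\mathcal{NS}$ for $i\leq d-1$; a second use of the inductive hypothesis makes $(M/L)/x(M/L)$ $\mathcal{NS}$-$\mathfrak{a}$-cofinite, and lifting via \cite[Lemma 2.2]{AS} along the $x$-sequence shows $M/L$ is $\mathcal{NS}$-$\mathfrak{a}$-cofinite. Combining with $L\in\mathcal{NS}(R,\mathfrak{a})_{cof}$ closes the induction.

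The main obstacle is justifying the two inductive invocations, because both $L$ and $(M/L)/x(M/L)$ have dimension $\leq d-1$ while $\dim R/\mathfrak{a}$ is still $d$. I expect to handle this by an auxiliary observation that, for a module $N$ with $\dim_R N\leq d-1$, $\mathcal{NS}$-$\mathfrak{a}$-cofiniteness coincides with $\mathcal{NS}$-$(\mathfrak{a}+\mathrm{Ann}_R N)$-cofiniteness---so that the relevant quotient ring has dimension $\leq d-1$ modulo this enlarged ideal and the inductive hypothesis applies legitimately.
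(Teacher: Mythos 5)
Your route departs from the paper's, which never decomposes $M$ by associated primes: it passes to $\overline{R}=R/\Gamma_{\mathfrak a}(R)$ and $\overline{M}=M/(0:_M\mathfrak a^n)$, observes that $\overline{\mathfrak a}$ contains an $\overline R$-regular element so that $\dim R/(\mathfrak a+\Gamma_{\mathfrak a}(R))\le d-1$, applies the induction to $\overline M$ with respect to the \emph{enlarged ideal} $\mathfrak a+\Gamma_{\mathfrak a}(R)$, and transfers cofiniteness back via the proof of \cite[Theorem 2.15]{S}. More importantly, your route breaks down. The element $x$ you want does not exist: $M/L$ is $\mathfrak a$-torsion, so every $\mathfrak p\in\mathrm{Ass}_R(M/L)$ contains $\mathfrak a$ (you yourself note each such $\mathfrak p$ is minimal over $\mathfrak a$), hence every $x\in\mathfrak a$ lies in every associated prime of $M/L$ and is a zerodivisor on $M/L$ unless $M/L=0$. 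The ``regular-element trick'' of Theorem \ref{lem:2.3} does not produce a module-regular element in $\mathfrak a$; the element chosen there is arranged, via the attached primes of the localizations $M_{\mathfrak p_j}$, so that $(0:_M x)$ and $M/xM$ both have dimension zero, and the conclusion is drawn from \cite[Lemma 2.2]{AS} applied to those two modules, not from an injective multiplication map. Consequently your displayed short exact sequences of Ext modules are not available.

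Second, both inductive invocations are unjustified. To apply any form of the inductive hypothesis to $L$ you first need $\mathrm{Ext}^i_R(R/\mathfrak a,L)\in\mathcal{NS}$ for $i\le d-1$; from $0\to L\to M\to M/L\to 0$ only $\mathrm{Hom}_R(R/\mathfrak a,L)\subseteq\mathrm{Hom}_R(R/\mathfrak a,M)$ comes for free, while already $\mathrm{Ext}^1_R(R/\mathfrak a,L)$ is controlled by $\mathrm{Hom}_R(R/\mathfrak a,M/L)$, about which nothing is yet known --- the argument is circular. Your proposed repair via $\mathfrak a+\mathrm{Ann}_RL$ also fails: for a module that is not finitely generated one only has $\mathrm{Supp}_RL\subseteq\mathrm{V}(\mathrm{Ann}_RL)$, so $\dim_RL\le d-1$ does not force $\dim R/(\mathfrak a+\mathrm{Ann}_RL)\le d-1$ (for instance $E(R/\mathfrak m)$ over a complete local domain has zero-dimensional support but zero annihilator). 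The quantity that must drop in the induction is $\dim R/\mathfrak a$ itself, which is exactly what the paper's enlargement of the ideal to $\mathfrak a+\Gamma_{\mathfrak a}(R)$ achieves and your decomposition does not.
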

\begin{proof}`Only if' part is trivial.

`If' part. We proceed by induction on $d$. If $d=1$ then the assertion follows by Corollary \ref{lem:2.4}. Suppose, inductively, $d>1$ and the result has been proved for smaller values of
$d$. If $\mathfrak{a}$ is nilpotent, say $\mathfrak{a}^n=0$ for some integer $n$, then $M=(0:_M\mathfrak{a}^n)\in\mathcal{NS}$ by Lemma \ref{lem:2.2} as $(0:_M\mathfrak{a})\in\mathcal{NS}$ and so $M$ is $\mathcal{NS}$-$\mathfrak{a}$-cofinite. Now assume that $\mathfrak{a}$ is not nilpotent. We can choose a positive
integer $n$ such that $(0:_R\mathfrak{a}^n)=\Gamma_\mathfrak{a}(R)$. Put $\overline{R}=R/\Gamma_\mathfrak{a}(R)$ and $\overline{M}=M/(0:_M\mathfrak{a}^n)$ which
is an $\overline{R}$-module. Taking $\overline{\mathfrak{a}}$ as the image of $\mathfrak{a}$ in $\overline{R}$, we have $\Gamma_{\overline{\mathfrak{a}}}(\overline{R})=0$. Thus $\overline{\mathfrak{a}}$ contains
an $\overline{R}$-regular element so that $\mathrm{dim}R/\mathfrak{a}+\Gamma_\mathfrak{a}(R)\leq d-1$. Note that $\mathrm{Supp}_R(R/\mathfrak{a}+\Gamma_\mathfrak{a}(R))\subseteq\mathrm{V}(\mathfrak{a})$, by the assumption and \cite[Lemma 2.1]{AS}, one has
$\mathrm{Ext}^i_R(R/\mathfrak{a}+\Gamma_\mathfrak{a}(R),M)\in\mathcal{NS}$ for $i\leq d$. Also $(0:_M\mathfrak{a}^n)\in\mathcal{NS}$, and thus $\mathrm{Ext}^i_R(R/\mathfrak{a}+\Gamma_\mathfrak{a}(R),\overline{M})\in\mathcal{NS}$ for $i\leq d$. On
the other hand, it is clear that $\mathrm{Supp}_R\overline{M}\subseteq\mathrm{V}(\mathfrak{a}+\Gamma_\mathfrak{a}(R))$. By the inductive hypothesis, the $R$-module $\overline{M}$ is $\mathcal{NS}$-$\mathfrak{a}+\Gamma_\mathfrak{a}(R)$-cofinite, and then $\overline{M}$ is $\mathcal{NS}$-$\mathfrak{a}$-cofinite by the proof of \cite[Theorem 2.15]{S}.
Therefore, $(0:_M\mathfrak{a}^n)\in\mathcal{NS}$ forces that
$M$ is $\mathcal{NS}$-$\mathfrak{a}$-cofinite.
\end{proof}

The following corollary is a nice generalization of \cite[Corollaries 2.16 and 2.18]{S}.

\begin{cor}\label{lem:2.6}{\it{If $\mathrm{dim}R=d\geq1$, then an $\mathfrak{a}$-torsion $R$-module $M$ is $\mathcal{NS}$-$\mathfrak{a}$-cofinite if and only if $\mathrm{Ext}^i_R(R/\mathfrak{a},M)\in\mathcal{NS}$ for all $i\leq d-1$.}}
\end{cor}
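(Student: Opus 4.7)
The plan is to reduce to Theorem~\ref{lem:2.10} by passing to the quotient $\overline{R}=R/\Gamma_\mathfrak{a}(R)$: once the $\mathfrak{a}$-torsion of $R$ is killed, the image $\overline{\mathfrak{a}}$ of $\mathfrak{a}$ contains an $\overline{R}$-regular element, so $\dim R/(\mathfrak{a}+\Gamma_\mathfrak{a}(R))\leq \dim\overline{R}-1\leq d-1$. This drop of one in dimension is exactly what reconciles the hypothesis $\mathrm{Ext}^i_R(R/\mathfrak{a},M)\in\mathcal{NS}$ for $i\leq d-1$ with the stronger requirement $i\leq d$ needed in Theorem~\ref{lem:2.10}.

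The `only if' direction is immediate. For `if', first dispose of the nilpotent case: if $\mathfrak{a}^n=0$, then $M=(0:_M\mathfrak{a}^n)$, and since $\mathrm{Hom}_R(R/\mathfrak{a},M)\cong(0:_M\mathfrak{a})\in\mathcal{NS}$, Lemma~\ref{lem:2.2} forces $M\in\mathcal{NS}$; hence $M$ is $\mathcal{NS}$-$\mathfrak{a}$-cofinite.

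Now assume $\mathfrak{a}$ is not nilpotent and choose $n$ with $(0:_R\mathfrak{a}^n)=\Gamma_\mathfrak{a}(R)$. Set $\overline{R}=R/\Gamma_\mathfrak{a}(R)$ and $\overline{M}=M/(0:_M\mathfrak{a}^n)$. Since $\Gamma_{\overline{\mathfrak{a}}}(\overline{R})=0$, prime avoidance supplies an $\overline{R}$-regular element in $\overline{\mathfrak{a}}$, so $\dim R/(\mathfrak{a}+\Gamma_\mathfrak{a}(R))\leq d-1$. Combining the hypothesis with \cite[Lemma~2.1]{AS} yields $\mathrm{Ext}^i_R(R/(\mathfrak{a}+\Gamma_\mathfrak{a}(R)),M)\in\mathcal{NS}$ for all $i\leq d-1$; Lemma~\ref{lem:2.2} gives $(0:_M\mathfrak{a}^n)\in\mathcal{NS}$, and then the exact sequence
\[
0\to(0:_M\mathfrak{a}^n)\to M\to\overline{M}\to 0
\]
propagates this to $\mathrm{Ext}^i_R(R/(\mathfrak{a}+\Gamma_\mathfrak{a}(R)),\overline{M})\in\mathcal{NS}$ for $i\leq d-1$.

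To conclude, apply Theorem~\ref{lem:2.10} to $\overline{M}$ with the ideal $\mathfrak{a}+\Gamma_\mathfrak{a}(R)$ (or Lemma~\ref{lem:2.1} in the boundary case $\dim R/(\mathfrak{a}+\Gamma_\mathfrak{a}(R))=0$): since $\mathrm{Supp}_R\overline{M}\subseteq\mathrm{V}(\mathfrak{a}+\Gamma_\mathfrak{a}(R))$, we get that $\overline{M}$ is $\mathcal{NS}$-$(\mathfrak{a}+\Gamma_\mathfrak{a}(R))$-cofinite. The argument of \cite[Theorem~2.15]{S} then descends this to $\mathcal{NS}$-$\mathfrak{a}$-cofiniteness of $\overline{M}$, and combined with $(0:_M\mathfrak{a}^n)\in\mathcal{NS}$ this shows $M$ is $\mathcal{NS}$-$\mathfrak{a}$-cofinite. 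The only real subtlety is the boundary dimension case, where Theorem~\ref{lem:2.10} is unavailable and Lemma~\ref{lem:2.1} must be invoked instead; the remaining steps faithfully follow the template of the proof of Theorem~\ref{lem:2.10}.
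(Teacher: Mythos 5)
Your proof is correct and takes essentially the same route as the paper: both reduce to Theorem \ref{lem:2.10} by passing to the ideal $\mathfrak{a}+\Gamma_\mathfrak{a}(R)$, whose quotient ring has dimension at most $d-1$, and then descend to $\mathcal{NS}$-$\mathfrak{a}$-cofiniteness via \cite[Theorem 2.15]{S}. The paper compresses this into a two-line citation of Theorem \ref{lem:2.10} and \cite[Theorem 2.15]{S}, whereas you unfold the same mechanism (already spelled out in the inductive step of the proof of Theorem \ref{lem:2.10}) explicitly, including the boundary cases.
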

\begin{proof} Let $\mathfrak{a}$ be an ideal of $R$ with $\mathrm{dim}R/\mathfrak{a}\leq d-1$. It follows from Theorem \ref{lem:2.10} that $M$ is $\mathcal{NS}$-$\mathfrak{a}$-cofinite if and only if $\mathrm{Ext}^i_R(R/\mathfrak{a},M)\in\mathcal{NS}$ for all $i\leq d-1$. Hence \cite[Theorem 2.15]{S} yields the desired statement.
\end{proof}

 The next result eliminates the hypothesis $\mathrm{Max}M\subseteq\mathrm{Supp}\mathcal{S}$ in \cite[Theorem 3.4]{AS}.

\begin{cor}\label{lem:2.7}{\it{$(1)$ Let $\mathcal{NS}^1(R,\mathfrak{a})_{cof}$ denote
the category of $\mathcal{NS}$-$\mathfrak{a}$-cofinite $R$-modules $M$ with $\mathrm{dim}_RM \leq1$. Then $\mathcal{NS}^1(R,\mathfrak{a})_{cof}$ is abelian.

(2) If either $\mathrm{dim}R\leq2$ or $\mathrm{dim}R/\mathfrak{a}\leq 1$, then $\mathcal{NS}(R,\mathfrak{a})_{cof}$ is abelian.}}
\end{cor}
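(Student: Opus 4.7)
The plan is to handle (1) and (2) uniformly. Under each of the three hypotheses in the statement, the earlier results reduce $\mathcal{NS}$-$\mathfrak{a}$-cofiniteness to a bounded $\mathrm{Ext}$ condition: it suffices that $\mathrm{Supp}_R(-)\subseteq\mathrm{V}(\mathfrak{a})$ together with $\mathrm{Ext}^i_R(R/\mathfrak{a},-)\in\mathcal{NS}$ for $i=0,1$. In (1) this is Theorem~\ref{lem:2.3}; in (2) it is Corollary~\ref{lem:2.4} when $\mathrm{dim}R/\mathfrak{a}\leq 1$, while when $\mathrm{dim}R\leq 2$ one applies Corollary~\ref{lem:2.6} with $d=2$ (noting that any $\mathcal{NS}$-$\mathfrak{a}$-cofinite module is automatically $\mathfrak{a}$-torsion, and that if $\mathrm{dim}R\leq 1$ every cofinite module already has $\mathrm{dim}_R\leq 1$, reducing to~(1)). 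So in every case, verifying membership in the subcategory amounts to controlling only $\mathrm{Hom}_R(R/\mathfrak{a},-)$ and $\mathrm{Ext}^1_R(R/\mathfrak{a},-)$.

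Given a morphism $f\colon M\to N$ inside the relevant subcategory, I would form the two standard short exact sequences
\[
0\to\ker f\to M\to\im f\to 0,\qquad 0\to\im f\to N\to\coker f\to 0,
\]
and observe that $\ker f$, $\im f$ and $\coker f$ inherit $\mathrm{Supp}_R\subseteq\mathrm{V}(\mathfrak{a})$ together with the relevant dimension bound. The first step is to show $\ker f\in\mathcal{NS}(R,\mathfrak{a})_{cof}$: $\mathrm{Hom}_R(R/\mathfrak{a},\ker f)$ embeds into $\mathrm{Hom}_R(R/\mathfrak{a},M)\in\mathcal{NS}$, and the long exact sequence sandwiches $\mathrm{Ext}^1_R(R/\mathfrak{a},\ker f)$ between $\mathrm{Hom}_R(R/\mathfrak{a},\im f)\subseteq\mathrm{Hom}_R(R/\mathfrak{a},N)\in\mathcal{NS}$ and $\mathrm{Ext}^1_R(R/\mathfrak{a},M)\in\mathcal{NS}$. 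Since $\mathcal{NS}$ is Serre, the low-degree criterion then gives $\ker f\in\mathcal{NS}(R,\mathfrak{a})_{cof}$.

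Knowing $\ker f$ is $\mathcal{NS}$-$\mathfrak{a}$-cofinite upgrades $\mathrm{Ext}^i_R(R/\mathfrak{a},\ker f)$ to lie in $\mathcal{NS}$ for every $i\geq 0$, in particular in degree $2$. Propagating this through the first short exact sequence places both $\mathrm{Hom}_R(R/\mathfrak{a},\im f)$ and $\mathrm{Ext}^1_R(R/\mathfrak{a},\im f)$ in $\mathcal{NS}$, so $\im f$ lies in the subcategory and hence has all its $\mathrm{Ext}^i_R(R/\mathfrak{a},-)$ in $\mathcal{NS}$. A final application of the same chase to the second short exact sequence produces $\mathrm{Ext}^i_R(R/\mathfrak{a},\coker f)\in\mathcal{NS}$ for $i=0,1$, so $\coker f$ also lies in the subcategory, which yields the abelianness.

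The only delicate step is identifying the correct low-degree criterion in each of the three cases; after that the proof is a routine long-exact-sequence diagram chase driven by the Serre property of $\mathcal{NS}$. The key technical point is the bootstrap from the criterion (low-degree Ext control) to the definition (all-degree Ext control) after establishing cofiniteness of $\ker f$ and then of $\im f$, which is what makes the second and third applications of the long exact sequence go through without circularity.
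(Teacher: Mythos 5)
Your proposal is correct and takes essentially the same route as the paper: in each case reduce membership to the two-term criterion ($\mathrm{Supp}\subseteq\mathrm{V}(\mathfrak{a})$ plus $\mathrm{Ext}^i_R(R/\mathfrak{a},-)\in\mathcal{NS}$ for $i=0,1$, via Theorem~\ref{lem:2.3}, Corollary~\ref{lem:2.4}, or Corollary~\ref{lem:2.6} with $d=2$), then chase the long exact sequences through $\ker f$, $\im f$, $\coker f$ in that order. The paper's proof is simply a terser version of the same argument, leaving implicit the sandwich for $\mathrm{Ext}^1_R(R/\mathfrak{a},\ker f)$ and the bootstrap from low-degree to all-degree Ext control that you spell out.
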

\begin{proof} We just prove (2) since the proof of (2) is similar.

Given an $R$-homomorphism $f:M\rightarrow N$ in $\mathcal{NS}^1(R,\mathfrak{a})_{cof}$, set $K=\mathrm{ker}f$, $I=\mathrm{im}f$ and $C=\mathrm{coker}f$. It
is easy to obtain that $\mathrm{Hom}_R(R/\mathfrak{a},K),\mathrm{Ext}^1_R(R/\mathfrak{a},K)\in\mathcal{NS}$ and hence
the module $K\in\mathcal{NS}^1(R,\mathfrak{a})_{cof}$ by Theorem \ref{lem:2.3}. This implies that $I\in\mathcal{NS}^1(R,\mathfrak{a})_{cof}$ and consequently $C\in\mathcal{NS}^1(R,\mathfrak{a})_{cof}$, as required.
\end{proof}

 The following corollary is a generalization \cite[Theorem 2.7]{AS}.

\begin{cor}\label{lem:2.7'}{\it{Let $M$ be an $\mathcal{NS}$-$\mathfrak{a}$-cofinite $R$-module with $\mathrm{dim}_RM \leq1$ and $N$ a finitely generated $R$-module.
Then the $R$-modules $\mathrm{Tor}^R_i(N,M)$ and $\mathrm{Ext}^i_R(N,M)$ are $\mathcal{NS}$-$\mathfrak{a}$-cofinite for all $i\geq 0$.}}
\end{cor}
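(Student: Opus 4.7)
The plan is to leverage the abelian subcategory structure from Corollary \ref{lem:2.7}(1): since $\mathcal{NS}^1(R,\mathfrak{a})_{cof}$ is abelian, any subquotient (inside $\mathrm{Mod}\text{-}R$) of an object in $\mathcal{NS}^1(R,\mathfrak{a})_{cof}$ that arises as the kernel or cokernel of a morphism between two of its objects again lies in the subcategory. I want to realize each $\mathrm{Tor}_i^R(N,M)$ and $\mathrm{Ext}^i_R(N,M)$ as the homology of a complex whose terms sit in $\mathcal{NS}^1(R,\mathfrak{a})_{cof}$.

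First I would observe that $M^n \in \mathcal{NS}^1(R,\mathfrak{a})_{cof}$ for every $n \geq 0$: the support and the dimension bound are trivially preserved under finite direct sums, and $\mathrm{Ext}^i_R(R/\mathfrak{a}, M^n) \cong \mathrm{Ext}^i_R(R/\mathfrak{a}, M)^n$ lies in $\mathcal{NS}$ since $\mathcal{NS}$ is Serre (hence closed under finite direct sums). Because $N$ is finitely generated and $R$ is noetherian, I can choose a resolution $F_\bullet \to N \to 0$ in which each $F_i = R^{n_i}$ is a finitely generated free module.

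For the Tor case, I would tensor this resolution with $M$. The complex $F_\bullet \otimes_R M$ has terms $F_i \otimes_R M \cong M^{n_i} \in \mathcal{NS}^1(R,\mathfrak{a})_{cof}$, and its differentials are $R$-linear maps between objects of this abelian subcategory. Hence their kernels and images lie in $\mathcal{NS}^1(R,\mathfrak{a})_{cof}$, and consequently so does the subquotient $\mathrm{Tor}_i^R(N,M)$; in particular it is $\mathcal{NS}$-$\mathfrak{a}$-cofinite. The Ext case is entirely parallel: applying $\mathrm{Hom}_R(-,M)$ to the same resolution yields a cocomplex whose $i$th term is $\mathrm{Hom}_R(R^{n_i},M) \cong M^{n_i} \in \mathcal{NS}^1(R,\mathfrak{a})_{cof}$, and the same kernel/image argument places $\mathrm{Ext}^i_R(N,M)$ in the subcategory.

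There is essentially no serious obstacle here once Corollary \ref{lem:2.7}(1) is available; the one thing to be careful about is to invoke the abelian structure rather than trying to verify the Ext vanishing defining $\mathcal{NS}$-$\mathfrak{a}$-cofiniteness directly for Tor and Ext, which would require an awkward spectral sequence argument. The dimension bound on the homology is automatic from $\mathrm{Supp}_R \mathrm{Tor}_i^R(N,M) \cup \mathrm{Supp}_R \mathrm{Ext}^i_R(N,M) \subseteq \mathrm{Supp}_R M$, so no additional work is needed to keep the modules inside $\mathcal{NS}^1(R,\mathfrak{a})_{cof}$.
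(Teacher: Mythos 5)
Your proof is correct and takes essentially the same route as the paper's: resolve $N$ by finite-rank free modules, observe that the terms of $F_\bullet\otimes_RM$ and $\mathrm{Hom}_R(F_\bullet,M)$ are finite direct sums of copies of $M$ lying in $\mathcal{NS}^1(R,\mathfrak{a})_{cof}$, and invoke the abelian-ness from Corollary \ref{lem:2.7}(1) to keep the homology subquotients in that subcategory.
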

\begin{proof} Since $N$ is finitely
generated, $N$
has a free resolution
\begin{center}$F^\bullet:\cdots\rightarrow F_n\rightarrow F_{n-1}\rightarrow\cdots\rightarrow F_1\rightarrow F_0\rightarrow0$,\end{center}
where all $F_i$ have finite ranks.
Then $\mathrm{Tor}^R_i(N,M)=\mathrm{H}_i(F^\bullet\otimes_RM)$ and $\mathrm{Ext}^i_R(N,M)=\mathrm{H}^i(\mathrm{Hom}_R(F^\bullet,M))$ are subquotients of a direct sum of finitely many copies of $M$.
Now, the assertion follows from Corollary \ref{lem:2.7}(1).
\end{proof}

The next result is a more general version of \cite[Theorem 2.8]{NS}.

\begin{cor}\label{lem:2.8}{\it{If either $\mathrm{dim}R=d\geq3$ or $\mathrm{dim}R/\mathfrak{a}=d-1$, then the subcategory $\mathcal{NS}(R,\mathfrak{a})_{cof}$ is abelian if and only if for any homomorphism $f:M\rightarrow N$ in $\mathcal{NS}(R,\mathfrak{a})_{cof}$ and $i\leq d-2$, $\mathrm{Ext}^i_R(R/\mathfrak{a},\mathrm{coker}f)\in\mathcal{NS}$.}}
\end{cor}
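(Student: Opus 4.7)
The forward direction is immediate: if $\mathcal{NS}(R,\mathfrak{a})_{cof}$ is abelian then $\mathrm{coker}f$ lies in $\mathcal{NS}(R,\mathfrak{a})_{cof}$, so by definition all $\mathrm{Ext}^i_R(R/\mathfrak{a},\mathrm{coker}f)$ lie in $\mathcal{NS}$, a fortiori for $i\leq d-2$.

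For the converse, fix a morphism $f\colon M\to N$ in $\mathcal{NS}(R,\mathfrak{a})_{cof}$ and set $K=\ker f$, $I=\mathrm{im}\,f$, $C=\mathrm{coker}\,f$; the goal is to place each of $K$, $I$, $C$ in $\mathcal{NS}(R,\mathfrak{a})_{cof}$. The two short exact sequences
\begin{align*}
0\to K\to M\to I\to 0,\qquad 0\to I\to N\to C\to 0
\end{align*}
show that the support condition $\mathrm{Supp}_R(-)\subseteq \mathrm{V}(\mathfrak{a})$ is inherited from $M$ and $N$. In either hypothesis of the corollary, Theorem \ref{lem:2.10} (when $\dim R/\mathfrak{a}=d-1$) or Corollary \ref{lem:2.6} (when $\dim R=d$) reduces the question of $\mathcal{NS}$-$\mathfrak{a}$-cofiniteness to checking $\mathrm{Ext}^i_R(R/\mathfrak{a},-)\in\mathcal{NS}$ for $i\leq d-1$, so only finitely many Ext groups need to be controlled.

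The central step is to upgrade the hypothesis on $C$ (valid in the range $i\leq d-2$) into control of $I$ in the range $i\leq d-1$. From $0\to I\to N\to C\to 0$ and Serreness of $\mathcal{NS}$, the piece
\begin{align*}
\mathrm{Ext}^{i-1}_R(R/\mathfrak{a},C)\;\to\;\mathrm{Ext}^i_R(R/\mathfrak{a},I)\;\to\;\mathrm{Ext}^i_R(R/\mathfrak{a},N)
\end{align*}
of the long exact sequence will do the job: the right term is in $\mathcal{NS}$ because $N$ is $\mathcal{NS}$-$\mathfrak{a}$-cofinite, and for $1\leq i\leq d-1$ the left term is in $\mathcal{NS}$ since $i-1\leq d-2$; the case $i=0$ follows from the inclusion $\mathrm{Hom}_R(R/\mathfrak{a},I)\hookrightarrow\mathrm{Hom}_R(R/\mathfrak{a},N)$. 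Thus $I$ satisfies the hypotheses of Theorem \ref{lem:2.10} or Corollary \ref{lem:2.6} and hence $I\in\mathcal{NS}(R,\mathfrak{a})_{cof}$.

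Once $I$ is known to be $\mathcal{NS}$-$\mathfrak{a}$-cofinite, every $\mathrm{Ext}^i_R(R/\mathfrak{a},I)$ lies in $\mathcal{NS}$. Feeding this into the long exact sequence of $0\to K\to M\to I\to 0$ yields $\mathrm{Ext}^i_R(R/\mathfrak{a},K)\in\mathcal{NS}$ for all $i$, and into the long exact sequence of $0\to I\to N\to C\to 0$ yields $\mathrm{Ext}^i_R(R/\mathfrak{a},C)\in\mathcal{NS}$ for all $i$; one final appeal to Theorem \ref{lem:2.10}/Corollary \ref{lem:2.6} puts $K$ and $C$ in $\mathcal{NS}(R,\mathfrak{a})_{cof}$, showing the category is abelian. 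The main obstacle is precisely the first step: the hypothesis only reaches index $d-2$ while the cofiniteness criteria demand control up to $d-1$, and the degree shift in the long exact sequence from $C$ to $I$ is exactly what bridges the gap.
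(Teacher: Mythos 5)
Your proposal is correct and follows essentially the same route as the paper: the forward direction is trivial, and the converse hinges on the degree shift in the long exact sequence of $0\to \mathrm{im}f\to N\to \mathrm{coker}f\to 0$, which converts the hypothesis on $\mathrm{Ext}^i_R(R/\mathfrak{a},\mathrm{coker}f)$ for $i\leq d-2$ into control of $\mathrm{Ext}^i_R(R/\mathfrak{a},\mathrm{im}f)$ for $i\leq d-1$, after which Theorem \ref{lem:2.10} or Corollary \ref{lem:2.6} gives cofiniteness of the image and hence of the kernel and cokernel. Your write-up simply makes explicit the Serre-subcategory and support-condition details that the paper leaves implicit.
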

\begin{proof} `Only if' part is trivial.

`If' part. Since $\mathrm{Ext}^i_R(R/\mathfrak{a},\mathrm{coker}f)\in\mathcal{NS}$ for all $i\leq d-2$, we have $\mathrm{Ext}^i_R(R/\mathfrak{a},\mathrm{im}f)\in\mathcal{NS}$ for all $i\leq d-1$, and hence $\mathrm{im}f$ is $\mathcal{NS}$-$\mathfrak{a}$-cofinite by Theorem \ref{lem:2.10} and Corollary \ref{lem:2.6}. This implies that $\mathrm{ker}f$ and therefore $\mathrm{coker}f$ is $\mathcal{NS}$-$\mathfrak{a}$-cofinite, as desired.
\end{proof}

The following result is a generalization of \cite[Proposition 2.10]{NS}.

\begin{prop}\label{lem:3.11}{\it{Let $\mathfrak{a}$ and $\mathfrak{b}$ be two ideals of $R$ with $\mathfrak{b}\subseteq\mathfrak{a}$ and $M$ an $R$-module. If $n$ is a nonnegative integer such that $\mathrm{Ext}^i_R(R/\mathfrak{b},M)$ are $\mathcal{NS}$-$\mathfrak{a}$-cofinite for all $i\leq n$, then $\mathrm{Ext}^i_R(R/\mathfrak{a},M)\in\mathcal{NS}$ for all $i\leq n$.}}
\end{prop}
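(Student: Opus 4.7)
My strategy is to use the Grothendieck change-of-rings spectral sequence. Since $\mathfrak{b}\subseteq\mathfrak{a}$ gives $R/\mathfrak{a}\otimes_R R/\mathfrak{b}=R/\mathfrak{a}$, tensor-hom adjunction supplies the natural isomorphism $\mathrm{Hom}_R(R/\mathfrak{a},-)\cong\mathrm{Hom}_{R/\mathfrak{b}}(R/\mathfrak{a},\mathrm{Hom}_R(R/\mathfrak{b},-))$; and because $\mathrm{Hom}_R(R/\mathfrak{b},-)$ sends $R$-injectives to $R/\mathfrak{b}$-injectives (which are $\mathrm{Hom}_{R/\mathfrak{b}}(R/\mathfrak{a},-)$-acyclic), the Grothendieck machine produces a convergent first-quadrant spectral sequence
\begin{equation*}
E_2^{p,q}=\mathrm{Ext}^p_{R/\mathfrak{b}}(R/\mathfrak{a},\mathrm{Ext}^q_R(R/\mathfrak{b},M))\;\Longrightarrow\;\mathrm{Ext}^{p+q}_R(R/\mathfrak{a},M).
\end{equation*}
Each $\mathrm{Ext}^m_R(R/\mathfrak{a},M)$ carries a finite filtration whose successive quotients are the $E_\infty^{p,q}$ with $p+q=m$, each in turn a subquotient of $E_2^{p,q}$. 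Since $\mathcal{NS}$ is a Serre subcategory, it suffices to show $E_2^{p,q}\in\mathcal{NS}$ for every $p+q\leq n$.

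For such a pair $(p,q)$ one has $q\leq n$, so by hypothesis $N_q:=\mathrm{Ext}^q_R(R/\mathfrak{b},M)$ is $\mathcal{NS}$-$\mathfrak{a}$-cofinite, and is naturally an $R/\mathfrak{b}$-module (annihilated by $\mathfrak{b}$ through its first variable). The $p=0$ column is immediate: $E_2^{0,q}=\mathrm{Hom}_{R/\mathfrak{b}}(R/\mathfrak{a},N_q)=\mathrm{Hom}_R(R/\mathfrak{a},N_q)$ lies in $\mathcal{NS}$ by the very definition of $\mathcal{NS}$-$\mathfrak{a}$-cofiniteness. For $p\geq 1$ I would prove the auxiliary fact: any $R/\mathfrak{b}$-module $N$ which is $\mathcal{NS}$-$\mathfrak{a}$-cofinite over $R$ satisfies $\mathrm{Ext}^p_{R/\mathfrak{b}}(R/\mathfrak{a},N)\in\mathcal{NS}$ for every $p\geq 0$. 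One runs the same spectral sequence with $N$ in place of $M$: the abutments $\mathrm{Ext}^{s}_R(R/\mathfrak{a},N)$ already lie in $\mathcal{NS}$ by cofiniteness of $N$, so one reduces to controlling the higher rows $\mathrm{Ext}^s_{R/\mathfrak{b}}(R/\mathfrak{a},\mathrm{Ext}^t_R(R/\mathfrak{b},N))$ with $t\geq 1$. These are handled by induction on $t$ via dimension-shifting along the short exact sequence $0\to\mathfrak{a}/\mathfrak{b}\to R/\mathfrak{b}\to R/\mathfrak{a}\to 0$ of finitely generated $R/\mathfrak{b}$-modules, combined with \cite[Lemma 2.1]{AS}, which extends ``$\in\mathcal{NS}$'' along finitely generated modules supported in $V(\mathfrak{a})$.

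The principal obstacle is exactly this auxiliary step. The category of $\mathcal{NS}$-$\mathfrak{a}$-cofinite modules is \emph{not} closed under subquotients, so one cannot argue simply from the fact that $\mathrm{Ext}^p_{R/\mathfrak{b}}(R/\mathfrak{a},N)$ is a subquotient of finitely many copies of $N$ (via a finite free $R/\mathfrak{b}$-resolution of $R/\mathfrak{a}$) and conclude membership in $\mathcal{NS}$. One must genuinely transfer Ext information between $R$ and $R/\mathfrak{b}$ via the secondary change-of-rings spectral sequence — or, equivalently, perform the dimension-shifting induction above — and this is the technical core of the proof. Once the auxiliary fact is in place, the main spectral sequence collapses the argument immediately.
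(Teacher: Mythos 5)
Your overall architecture matches the paper's: both arguments run the Grothendieck spectral sequence $E_2^{p,q}=\mathrm{Ext}^p_{R/\mathfrak{b}}(R/\mathfrak{a},\mathrm{Ext}^q_R(R/\mathfrak{b},M))\Rightarrow\mathrm{Ext}^{p+q}_R(R/\mathfrak{a},M)$ and reduce everything to the auxiliary claim that $\mathrm{Ext}^p_{R/\mathfrak{b}}(R/\mathfrak{a},L)\in\mathcal{NS}$ for all $p$ whenever $L=\mathrm{Ext}^q_R(R/\mathfrak{b},M)$ is $\mathcal{NS}$-$\mathfrak{a}$-cofinite, and you rightly flag this claim as the technical core and rightly reject the naive subquotient argument. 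The gap is in your proposed proof of that auxiliary claim. Rerunning the same Hom--Hom spectral sequence with $N$ in place of $M$ produces rows $E_2^{s,t}=\mathrm{Ext}^s_{R/\mathfrak{b}}(R/\mathfrak{a},\mathrm{Ext}^t_R(R/\mathfrak{b},N))$ for $t\geq1$ whose inner modules $\mathrm{Ext}^t_R(R/\mathfrak{b},N)$ are completely uncontrolled: they are subquotients of finite direct sums of copies of $N$, hence by your own observation not known to be cofinite, and the hypothesis says nothing about them. Your proposed repair --- induction on $t$ with dimension shifting along $0\to\mathfrak{a}/\mathfrak{b}\to R/\mathfrak{b}\to R/\mathfrak{a}\to0$ --- does not touch these inner modules at all; it only replaces the outer variable $R/\mathfrak{a}$ by $\mathfrak{a}/\mathfrak{b}$, a finitely generated module that is \emph{not} supported in $\mathrm{V}(\mathfrak{a})$, so \cite[Lemma 2.1]{AS} does not apply to it. As sketched, the induction does not close: you are bounding $\mathrm{Ext}^s_{R/\mathfrak{b}}(R/\mathfrak{a},-)$ of a cofinite module by error terms of the form $\mathrm{Ext}^s_{R/\mathfrak{b}}(R/\mathfrak{a},-)$ of modules that are not cofinite.

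The paper closes this loop with the \emph{other} change-of-rings spectral sequence: for an $R/\mathfrak{b}$-module $L$,
\[
E_2^{p,q}=\mathrm{Ext}^p_{R/\mathfrak{b}}(\mathrm{Tor}_q^R(R/\mathfrak{b},R/\mathfrak{a}),L)\Longrightarrow\mathrm{Ext}^{p+q}_R(R/\mathfrak{a},L).
\]
Here the abutment lies in $\mathcal{NS}$ because $L$ is $\mathcal{NS}$-$\mathfrak{a}$-cofinite over $R$, and the rows $q\geq1$ keep the \emph{same} second argument $L$ while the first argument ranges over the finitely generated $R/\mathfrak{b}$-modules $\mathrm{Tor}_q^R(R/\mathfrak{b},R/\mathfrak{a})$, all supported in $\mathrm{Supp}_{R/\mathfrak{b}}R/\mathfrak{a}$. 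An ascending induction on $s$ then works: knowing $E_2^{p,0}=\mathrm{Ext}^p_{R/\mathfrak{b}}(R/\mathfrak{a},L)\in\mathcal{NS}$ for $p<s$, \cite[Lemma 2.4]{AS} upgrades this to $E_2^{p,q}\in\mathcal{NS}$ for all $p<s$ and all $q$, and the exact sequences $E_r^{s-r,r-1}\to E_r^{s,0}\to E_{r+1}^{s,0}\to0$ together with $E_\infty^{s,0}\hookrightarrow\mathrm{Ext}^s_R(R/\mathfrak{a},L)\in\mathcal{NS}$ recover $E_2^{s,0}\in\mathcal{NS}$. If you substitute this argument for your auxiliary step, the rest of your proof goes through as written.
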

\begin{proof} Assume that $0\rightarrow M\rightarrow E^0\stackrel{d^0}\rightarrow E^1\stackrel{d^1}\rightarrow\cdots$ is an injective resolution of $M$. We get the exact sequences
$0\rightarrow M^i\rightarrow E^i\rightarrow M^{i+1}\rightarrow 0$ and isomorphisms
\begin{center}$\mathrm{Ext}^{i+1}_R(R/\mathfrak{a},M)\cong\mathrm{Ext}^{1}_R(R/\mathfrak{a},M^i),\
\mathrm{Ext}^{i+1}_R(R/\mathfrak{b},M)\cong\mathrm{Ext}^{1}_R(R/\mathfrak{b},M^i)$,\end{center}
where $M^i=\mathrm{ker}d^i$ for $i\geq0$.
Hence, for each $i\geq0$, there is an exact sequence
\begin{center}$0\rightarrow(0:_{M^i}\mathfrak{b})\rightarrow (0:_{E^i}\mathfrak{b}) \stackrel{f^i}\rightarrow(0:_{M^{i+1}}\mathfrak{b})\rightarrow\mathrm{Ext}^{i+1}_R(R/\mathfrak{b},M)\rightarrow 0$.\end{center}
We first show that $\mathrm{Ext}^s_{R/\mathfrak{b}}(R/\mathfrak{a},\mathrm{Ext}^{i}_R(R/\mathfrak{b},M))\in\mathcal{NS}$ for all $s\geq0$ and $0\leq i\leq n$. Consider the Grothendieck spectral sequences
 \begin{center}$E_2^{p,q}=\xymatrix@C=10pt@R=5pt{
 \mathrm{Ext}^{p}_{R/\mathfrak{b}}(\mathrm{Tor}_q^R(R/\mathfrak{b},R/\mathfrak{a}),\mathrm{Ext}^{i}_R(R/\mathfrak{b},M))\ar@{=>}[r]_{\ \ \ \ \ \ \  p}&
 \mathrm{Ext}^{p+q}_R(R/\mathfrak{a},\mathrm{Ext}^{i}_R(R/\mathfrak{b},M)).}$\end{center}
For $s=0$, we have $\mathrm{Hom}_{R/\mathfrak{b}}(R/\mathfrak{a},\mathrm{Ext}^{i}_R(R/\mathfrak{b},M))\cong
\mathrm{Hom}_{R}(R/\mathfrak{a},\mathrm{Ext}^{i}_R(R/\mathfrak{b},M))\in
\mathcal{NS}$ for $0\leq i\leq n$. Now, assume that $s> 0$ and the result has been proved for all values smaller than $s$. Then
$E_2^{p,0}=\mathrm{Ext}^{p}_{R/\mathfrak{b}}(R/\mathfrak{a},\mathrm{Ext}^{i}_R(R/\mathfrak{b},M))\in\mathcal{NS}$ for all $0\leq p<s$. Since $\mathrm{Supp}_{R/\mathfrak{b}}\mathrm{Tor}_q^R(R/\mathfrak{b},R/\mathfrak{a})\subseteq
\mathrm{Supp}_{R/\mathfrak{b}}R/\mathfrak{a}$, it follows from \cite[Lemma 2.4]{AS} that $E_2^{p,q}\in\mathcal{NS}$ for all $0\leq p<s$ and $q\geq0$. There exists a finite filtration
\begin{center}$0=\Phi^{s+1}H^s\subset\cdots\subset \Phi^{1}H^s\subset\Phi^{0}H^s\subset H^s:=\mathrm{Ext}^{s}_R(R/\mathfrak{a},\mathrm{Ext}^{i}_R(R/\mathfrak{b},M))$,\end{center}
such that $E_\infty^{s,0}\cong\Phi^{s}H^s/\Phi^{s+1}H^s=\Phi^{s}H^s$
is a submodule of $H^s\in\mathcal{NS}$, and so $E_\infty^{s,0}\in\mathcal{NS}$. For $r\geq2$, consider the differential
\begin{center}$E_r^{s-r,r-1}\xrightarrow{d_r^{s-r,r-1}}E_r^{s,0}
\xrightarrow{d_r^{s,0}}E_r^{s+r,-r+1}=0$.
\end{center} We have an exact sequence $E_r^{s-r,r-1}\rightarrow E_r^{s,0}\rightarrow E_{r+1}^{s,0}\rightarrow0$. As $E_r^{s,0}\cong E_\infty^{s,0}\in\mathcal{NS}$ for $r\gg0$, the sequence
implies that $E_2^{s,0}=\mathrm{Ext}^s_{R/\mathfrak{b}}(R/\mathfrak{a},\mathrm{Ext}^{i}_R(R/\mathfrak{b},M))\in\mathcal{NS}$ for all $s\geq 0$ and $0\leq i\leq n$.
Next consider the Grothendieck spectral sequences
 \begin{center}$E_2^{p,q}=\xymatrix@C=10pt@R=5pt{
 \mathrm{Ext}^{p}_{R/\mathfrak{b}}(R/\mathfrak{a},\mathrm{Ext}^{q}_R(R/\mathfrak{b},M))\ar@{=>}[r]_{\ \ \ \ \ \ \  p}&
\mathrm{Ext}^{p+q}_R(R/\mathfrak{a},M).}$\end{center}
For $0\leq i\leq n$, there exists a finite filtration \begin{center}$
 0=\Phi^{i+1}H^{i}\subseteq \Phi^{i}H^{i}\subseteq\cdots\subseteq \Phi^{1}H^{i}\subseteq \Phi^{0}H^{i}=H^{i}:=\mathrm{Ext}^{i}_R(R/\mathfrak{a},M)$,
\end{center}such that $\Phi^{p}H^{i}/\Phi^{p+1}H^{i}\cong E_\infty^{p,i-p}$ for $0\leq p\leq i$. As $E_\infty^{p,i-p}$ is a subquotient of $E_2^{p,i-p}$, a successive use of the exact sequence
\begin{center}$
 0\rightarrow \Phi^{p+1}H^i\rightarrow \Phi^{p}H^i\rightarrow \Phi^{p}H^i/\Phi^{p+1}H^i\rightarrow0$
\end{center} implies that $\mathrm{Ext}^i_R(R/\mathfrak{a},M)\in\mathcal{NS}$ for all $i\leq n$.
\end{proof}

\begin{cor}\label{lem:3.11'}{\it{Let $\mathfrak{a}$ and $\mathfrak{b}$ be two ideals of $R$ with $\mathfrak{b}\subseteq\mathfrak{a}$ and $M$ an $\mathfrak{a}$-torsion $R$-module.

$(1)$ If $\mathrm{Ext}^i_R(R/\mathfrak{b},M)$ is $\mathcal{NS}$-$\mathfrak{a}$-cofinite for each $i\geq0$, then $M$ is $\mathcal{NS}$-$\mathfrak{a}$-cofinite.

$(2)$ For a non-negative integer $d$, if $\mathrm{dim}R/\mathfrak{a}=d$ and $\mathrm{Ext}^i_R(R/\mathfrak{b},M)$ is $\mathcal{NS}$-$\mathfrak{a}$-cofinite for $0\leq i\leq d$, then $M$ is $\mathcal{NS}$-$\mathfrak{a}$-cofinite.}}
\end{cor}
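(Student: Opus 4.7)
The plan is to derive both parts directly from Proposition \ref{lem:3.11}, packaging its conclusion with either the definition of $\mathcal{NS}$-$\mathfrak{a}$-cofiniteness or with the earlier characterization theorems. Both parts rely on the observation that $\mathcal{NS}$-$\mathfrak{a}$-cofiniteness of $\mathrm{Ext}^i_R(R/\mathfrak{b},M)$ in particular entails $\mathrm{Ext}^j_R(R/\mathfrak{a},\mathrm{Ext}^i_R(R/\mathfrak{b},M))\in\mathcal{NS}$ for every $j$, which is exactly the input needed to invoke Proposition \ref{lem:3.11}.

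For part (1), I would apply Proposition \ref{lem:3.11} for every nonnegative integer $n$; since the hypothesis gives that $\mathrm{Ext}^i_R(R/\mathfrak{b},M)$ is $\mathcal{NS}$-$\mathfrak{a}$-cofinite for all $i\geq 0$, we conclude $\mathrm{Ext}^i_R(R/\mathfrak{a},M)\in\mathcal{NS}$ for every $i\geq0$. The assumption that $M$ is $\mathfrak{a}$-torsion yields $\mathrm{Supp}_RM\subseteq\mathrm{V}(\mathfrak{a})$, so $M$ is $\mathcal{NS}$-$\mathfrak{a}$-cofinite straight from the definition.

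For part (2), I would apply Proposition \ref{lem:3.11} with $n=d$ to obtain $\mathrm{Ext}^i_R(R/\mathfrak{a},M)\in\mathcal{NS}$ for all $0\leq i\leq d$, and then split on the value of $d$. If $d\geq 1$, Theorem \ref{lem:2.10} applies directly, since $\mathrm{dim}R/\mathfrak{a}=d$ and $\mathrm{Supp}_RM\subseteq\mathrm{V}(\mathfrak{a})$ by the $\mathfrak{a}$-torsion hypothesis. If $d=0$, then the $\mathfrak{a}$-torsion assumption forces $\mathrm{dim}_RM=0$, so Lemma \ref{lem:2.1} applied to $\mathrm{Hom}_R(R/\mathfrak{a},M)\in\mathcal{NS}$ finishes the argument.

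I do not anticipate any genuine obstacle: the heavy lifting—passing from $R/\mathfrak{b}$-coefficients to $R/\mathfrak{a}$-coefficients via the Grothendieck spectral sequences—has already been carried out in the proof of Proposition \ref{lem:3.11}. The only mild subtlety is the boundary case $d=0$ in part (2), where Theorem \ref{lem:2.10} (stated for $d\geq1$) is not available and must be swapped for the zero-dimensional Lemma \ref{lem:2.1}.
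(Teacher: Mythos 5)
Your argument is correct and is exactly the intended derivation: the paper states this corollary without proof as an immediate consequence of Proposition \ref{lem:3.11}, combined with the definition of $\mathcal{NS}$-$\mathfrak{a}$-cofiniteness for part (1) and with Theorem \ref{lem:2.10} for part (2). Your handling of the boundary case $d=0$ via Lemma \ref{lem:2.1} is a sensible and correct way to cover a case the paper leaves implicit.
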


\bigskip
\section{\bf $\mathcal{NS}$-$\mathfrak{a}$-cofiniteness of local cohomology modules}
This section, we study $\mathcal{NS}$-$\mathfrak{a}$-cofiniteness of local cohomology modules. The following result generalizes \cite[Theorem 3.3 and Proposition 3.4]{NS} and \cite[Theorem 3.5]{AS}.

\begin{thm}\label{lem:3.2}{\it{Let $M$ be an $R$-module and $n$ a non-negative integer. If either $\mathrm{dim}_RM\leq 1$ or $\mathrm{dim}R/\mathfrak{a}\leq 1$ or $\mathrm{dim}R\leq 2$, then $\mathrm{Ext}^i_R(R/\mathfrak{a},M)\in\mathcal{NS}$ for all $i\leq n+1$ if and only if $\mathrm{H}^i_\mathfrak{a}(M)$ is $\mathcal{NS}$-$\mathfrak{a}$-cofinite for all $i\leq n$ and $\mathrm{Hom}_R(R/\mathfrak{a},\mathrm{H}^{n+1}_\mathfrak{a}(M))\in\mathcal{NS}$.}}
\end{thm}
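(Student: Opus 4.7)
\vspace{2mm}
The plan is to split the equivalence in two: the ``if'' direction falls out of a single spectral sequence, while the ``only if'' direction is an induction on $n$ whose base case $n=0$ carries all the content.

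For the ``if'' direction I would invoke the change-of-functor spectral sequence
\[
E_2^{p,q}=\mathrm{Ext}^p_R(R/\mathfrak{a},\mathrm{H}^q_\mathfrak{a}(M))\Longrightarrow\mathrm{Ext}^{p+q}_R(R/\mathfrak{a},M),
\]
which is available because $\mathrm{Supp}_R(R/\mathfrak{a})\subseteq\mathrm{V}(\mathfrak{a})$ forces $\mathrm{Hom}_R(R/\mathfrak{a},-)=\mathrm{Hom}_R(R/\mathfrak{a},\Gamma_\mathfrak{a}(-))$ and $\Gamma_\mathfrak{a}$ carries injectives to injectives. For any $i\leq n+1$ and $p+q=i$, either $q\leq n$, in which case $\mathrm{H}^q_\mathfrak{a}(M)$ is $\mathcal{NS}$-$\mathfrak{a}$-cofinite and hence $E_2^{p,q}\in\mathcal{NS}$; or $q=n+1$, forcing $p=0$ and $E_2^{0,n+1}=\mathrm{Hom}_R(R/\mathfrak{a},\mathrm{H}^{n+1}_\mathfrak{a}(M))\in\mathcal{NS}$. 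Each $E_\infty^{p,q}$ is a subquotient of $E_2^{p,q}$, so every factor of the finite filtration on $\mathrm{Ext}^i_R(R/\mathfrak{a},M)$ lies in $\mathcal{NS}$, whence $\mathrm{Ext}^i_R(R/\mathfrak{a},M)\in\mathcal{NS}$ by Serre closure.

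For the base case of the ``only if'' direction, set $M':=M/\Gamma_\mathfrak{a}(M)$ and apply $\mathrm{Hom}_R(R/\mathfrak{a},-)$ to $0\to\Gamma_\mathfrak{a}(M)\to M\to M'\to 0$. Using $\mathrm{Hom}_R(R/\mathfrak{a},M')=0$ gives $\mathrm{Hom}_R(R/\mathfrak{a},\Gamma_\mathfrak{a}(M))\cong\mathrm{Hom}_R(R/\mathfrak{a},M)$ and an embedding $\mathrm{Ext}^1_R(R/\mathfrak{a},\Gamma_\mathfrak{a}(M))\hookrightarrow\mathrm{Ext}^1_R(R/\mathfrak{a},M)$, both in $\mathcal{NS}$. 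Theorem \ref{lem:2.3} then forces $\Gamma_\mathfrak{a}(M)$ to be $\mathcal{NS}$-$\mathfrak{a}$-cofinite whenever $\mathrm{dim}_R M\leq 1$ or $\mathrm{dim}R/\mathfrak{a}\leq 1$ (since then $\mathrm{dim}_R\Gamma_\mathfrak{a}(M)\leq 1$), and when $\mathrm{dim}R\leq 2$ the same conclusion follows from Corollary \ref{lem:2.6} with $d=\mathrm{dim}R$, because Ext-vanishing in $\mathcal{NS}$ through $i\leq d-1\leq 1$ suffices. Next embed $M'$ into its injective hull $E=E(M')$; essentiality together with $\Gamma_\mathfrak{a}(M')=0$ gives $\Gamma_\mathfrak{a}(E)=0$, so $\mathrm{Ext}^i_R(R/\mathfrak{a},E)=0$ for all $i\geq 0$. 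Writing $N:=E/M'$, the long exact sequences of Ext and of local cohomology respectively yield $\mathrm{Ext}^i_R(R/\mathfrak{a},N)\cong\mathrm{Ext}^{i+1}_R(R/\mathfrak{a},M')$ and $\mathrm{H}^i_\mathfrak{a}(N)\cong\mathrm{H}^{i+1}_\mathfrak{a}(M')\cong\mathrm{H}^{i+1}_\mathfrak{a}(M)$ for every $i\geq 0$. In particular $\mathrm{Hom}_R(R/\mathfrak{a},\mathrm{H}^1_\mathfrak{a}(M))\cong\mathrm{Hom}_R(R/\mathfrak{a},N)\cong\mathrm{Ext}^1_R(R/\mathfrak{a},M')$, which lies in $\mathcal{NS}$ because $\mathrm{Ext}^1_R(R/\mathfrak{a},\Gamma_\mathfrak{a}(M))$ does.

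The induction step is then a formal reduction. Granted the base case, $\Gamma_\mathfrak{a}(M)$ is $\mathcal{NS}$-$\mathfrak{a}$-cofinite, so $\mathrm{Ext}^i_R(R/\mathfrak{a},M')\in\mathcal{NS}$ for $i\leq n+1$, whence $\mathrm{Ext}^i_R(R/\mathfrak{a},N)\in\mathcal{NS}$ for $i\leq n$; applying the inductive hypothesis at level $n-1$ to $N$ yields $\mathrm{H}^i_\mathfrak{a}(N)$ $\mathcal{NS}$-$\mathfrak{a}$-cofinite for $i\leq n-1$ and $\mathrm{Hom}_R(R/\mathfrak{a},\mathrm{H}^n_\mathfrak{a}(N))\in\mathcal{NS}$, and translating via $\mathrm{H}^i_\mathfrak{a}(N)\cong\mathrm{H}^{i+1}_\mathfrak{a}(M)$ closes the induction. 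The main obstacle is verifying that $N$ still satisfies one of the three dimensional hypotheses so that the inductive hypothesis applies: in the cases $\mathrm{dim}R/\mathfrak{a}\leq 1$ or $\mathrm{dim}R\leq 2$ this is automatic since the hypothesis is on $R$, while in the case $\mathrm{dim}_R M\leq 1$ I would use $\mathrm{Ass}_R E(M')=\mathrm{Ass}_R M'$ together with closedness of $\mathrm{Supp}_R$ under specialization to get $\mathrm{Supp}_R N\subseteq\mathrm{Supp}_R E(M')\subseteq\mathrm{Supp}_R M'\subseteq\mathrm{Supp}_R M$, hence $\mathrm{dim}_R N\leq 1$.
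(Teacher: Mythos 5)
Your proof is correct, but it takes a genuinely different route from the paper's. The paper delegates essentially all of the homological bookkeeping to an external reference: the ``if'' direction is quoted from [BA, Theorem 2.1], and the ``only if'' direction is an induction on $n$ in which [BA, Theorem 2.9] (with $s=1$) is used at each stage to pass from $\mathrm{Ext}^i_R(R/\mathfrak{a},M)\in\mathcal{NS}$, $i=0,1$ (resp.\ the cofiniteness of $\mathrm{H}^{\le n-1}_\mathfrak{a}(M)$), to $\mathrm{Ext}^i_R(R/\mathfrak{a},\mathrm{H}^n_\mathfrak{a}(M))\in\mathcal{NS}$ for $i=0,1$, after which Theorem \ref{lem:2.3} and Corollaries \ref{lem:2.4}, \ref{lem:2.6} finish exactly as in your argument. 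You instead make both halves self-contained: the spectral sequence $\mathrm{Ext}^p_R(R/\mathfrak{a},\mathrm{H}^q_\mathfrak{a}(M))\Rightarrow\mathrm{Ext}^{p+q}_R(R/\mathfrak{a},M)$ for the ``if'' direction, and for the ``only if'' direction the classical d\'evissage $M\twoheadrightarrow M'=M/\Gamma_\mathfrak{a}(M)\hookrightarrow E(M')$ with $N=E(M')/M'$, using $\mathrm{H}^i_\mathfrak{a}(N)\cong\mathrm{H}^{i+1}_\mathfrak{a}(M)$ and $\mathrm{Ext}^i_R(R/\mathfrak{a},N)\cong\mathrm{Ext}^{i+1}_R(R/\mathfrak{a},M')$ to shift degrees and induct. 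Your approach buys transparency and independence from [BA]; the cost is that you must verify the dimension hypothesis propagates to $N$, which you handle correctly via $\mathrm{Supp}_RN\subseteq\mathrm{Supp}_RE(M')\subseteq\mathrm{Supp}_RM'$. Two small points: (i) your justification that $\mathrm{Ext}^1_R(R/\mathfrak{a},M')\in\mathcal{NS}$ ``because $\mathrm{Ext}^1_R(R/\mathfrak{a},\Gamma_\mathfrak{a}(M))$ does'' cites the wrong degree --- the long exact sequence $\mathrm{Ext}^1_R(R/\mathfrak{a},M)\to\mathrm{Ext}^1_R(R/\mathfrak{a},M')\to\mathrm{Ext}^2_R(R/\mathfrak{a},\Gamma_\mathfrak{a}(M))$ requires $\mathrm{Ext}^2$ of $\Gamma_\mathfrak{a}(M)$ to lie in $\mathcal{NS}$, which does hold since you have already shown $\Gamma_\mathfrak{a}(M)$ is $\mathcal{NS}$-$\mathfrak{a}$-cofinite, so this is only a misstatement, not a gap; (ii) Corollary \ref{lem:2.6} as stated requires $\dim R=d\ge 1$, so the degenerate case $\dim R=0$ should be routed through Lemma \ref{lem:2.1} instead, a triviality the paper also glosses over.
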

\begin{proof} `If' part follows from \cite[Theorem 2.1]{BA}.

`Only if' part.
 Set $s=1$ in \cite[Theorem 2.9]{BA}, it is enough to show that $\mathrm{H}^{i}_\mathfrak{a}(M)$ are $\mathcal{NS}$-$\mathfrak{a}$-cofinite
for all $i\leq n$. We prove by induction on $n$. If $n=0$ and $\mathrm{Ext}^i_R(R/\mathfrak{a},M)\in\mathcal{NS}$ for $i=0,1$, then $\mathrm{Hom}_R(R/\mathfrak{a},\Gamma_\mathfrak{a}(M)),\mathrm{Ext}^1_R(R/\mathfrak{a},\Gamma_\mathfrak{a}(M))\in\mathcal{NS}$, and so
$\Gamma_\mathfrak{a}(M)$ is $\mathcal{NS}$-$\mathfrak{a}$-cofinite by Theorem \ref{lem:2.3} and Corollaries \ref{lem:2.4} and \ref{lem:2.6} and $\mathrm{Hom}_R(R/\mathfrak{a},\mathrm{H}^{1}_\mathfrak{a}(M))\in\mathcal{NS}$ by \cite[Theorem 2.9]{BA}. Now, suppose that $n>0$ and the result has been proved for smaller values of
$n$. Then $\mathrm{H}^i_\mathfrak{a}(X)$ is $\mathcal{NS}$-$\mathfrak{a}$-cofinite for $i\leq n-1$ by the induction. Hence
\cite[Theorem 2.9]{BA} implies that $\mathrm{Ext}^i_R(R/\mathfrak{a},\mathrm{H}^{n}_\mathfrak{a}(M))\in\mathcal{NS}$ for $i=0,1$, and hence
 $\mathrm{H}^{n}_\mathfrak{a}(M)$ is $\mathcal{NS}$-$\mathfrak{a}$-cofinite by Corollaries \ref{lem:2.4} and \ref{lem:2.6} and $\mathrm{Hom}_R(R/\mathfrak{a},\mathrm{H}^{n+1}_\mathfrak{a}(M))\in\mathcal{NS}$ by \cite[Theorem 2.9]{BA}.
\end{proof}

\begin{cor}\label{lem:3.2'}{\it{Let $\mathfrak{a},\mathfrak{b}$ be two ideals of $R$ with $\mathfrak{b}\subseteq\mathfrak{a}$, $n$ a non-negative integer and $M$ be an $R$-module such that $\mathrm{H}^{i}_\mathfrak{b}(M)$ is $\mathcal{NS}$-$\mathfrak{a}$-cofinite for $i\leq n+1$. If either $\mathrm{dim}R/\mathfrak{a}\leq 1$ or $\mathrm{dim}R\leq 2$, then $\mathrm{H}^i_\mathfrak{a}(M)$ are $\mathcal{NS}$-$\mathfrak{a}$-cofinite for all $i\leq n$.}}
\end{cor}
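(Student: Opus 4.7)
The plan is to reduce this corollary to Theorem \ref{lem:3.2}: under either of the standing dimension hypotheses, that theorem converts $\mathcal{NS}$-membership of $\mathrm{Ext}^i_R(R/\mathfrak{a},M)$ for $i\leq n+1$ into the desired $\mathcal{NS}$-$\mathfrak{a}$-cofiniteness of $\mathrm{H}^i_{\mathfrak{a}}(M)$ for $i\leq n$. So the task is to produce the hypothesis of Theorem \ref{lem:3.2} from $\mathcal{NS}$-$\mathfrak{a}$-cofiniteness of $\mathrm{H}^i_{\mathfrak{b}}(M)$ in the range $i\leq n+1$.

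The tool I would use is a change-of-ideal Grothendieck spectral sequence driven by the containment $\mathfrak{b}\subseteq\mathfrak{a}$. For any $R$-module $N$, the submodule $(0:_N\mathfrak{a})$ is $\mathfrak{a}$-torsion and hence $\mathfrak{b}$-torsion, so $\mathrm{Hom}_R(R/\mathfrak{a},N)=\mathrm{Hom}_R(R/\mathfrak{a},\Gamma_{\mathfrak{b}}(N))$. Because $\Gamma_{\mathfrak{b}}$ preserves injectives, this factorization yields a first-quadrant spectral sequence
$$E_2^{p,q}=\mathrm{Ext}^p_R(R/\mathfrak{a},\mathrm{H}^q_{\mathfrak{b}}(M))\Longrightarrow \mathrm{Ext}^{p+q}_R(R/\mathfrak{a},M).$$
By hypothesis $\mathrm{H}^q_{\mathfrak{b}}(M)$ is $\mathcal{NS}$-$\mathfrak{a}$-cofinite for all $q\leq n+1$, which forces $E_2^{p,q}\in\mathcal{NS}$ for all $p\geq0$ and all $q\leq n+1$. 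Fixing $i\leq n+1$, on the diagonal $p+q=i$ one has $q\leq i\leq n+1$, so every term of that diagonal lies in $\mathcal{NS}$.

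From here I would run the standard finite-filtration argument already spelled out in the second half of the proof of Proposition \ref{lem:3.11}: since $\mathcal{NS}$ is Serre, each $E_{\infty}^{p,i-p}$ is a subquotient of $E_2^{p,i-p}$ and so remains in $\mathcal{NS}$, and the finite filtration of $\mathrm{Ext}^i_R(R/\mathfrak{a},M)$ whose successive quotients are the $E_{\infty}^{p,i-p}$ forces $\mathrm{Ext}^i_R(R/\mathfrak{a},M)\in\mathcal{NS}$ for every $i\leq n+1$. Theorem \ref{lem:3.2}, applicable under the assumed dimension condition, then delivers the $\mathcal{NS}$-$\mathfrak{a}$-cofiniteness of $\mathrm{H}^i_{\mathfrak{a}}(M)$ for all $i\leq n$.

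I do not expect a genuine obstacle: the only subtlety is to verify that the composite-functor spectral sequence is correctly set up (the equality $\mathrm{Hom}_R(R/\mathfrak{a},-)=\mathrm{Hom}_R(R/\mathfrak{a},-)\circ\Gamma_{\mathfrak{b}}$ and injectivity preservation under $\Gamma_{\mathfrak{b}}$), but this is standard, and the rest of the argument is a carbon copy of the $\mathcal{NS}$-filtration bookkeeping already performed in Proposition \ref{lem:3.11}.
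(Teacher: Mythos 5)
Your proposal is correct and follows essentially the same route as the paper: the same Grothendieck spectral sequence $E_2^{p,q}=\mathrm{Ext}^p_R(R/\mathfrak{a},\mathrm{H}^q_{\mathfrak{b}}(M))\Rightarrow\mathrm{Ext}^{p+q}_R(R/\mathfrak{a},M)$, the same finite-filtration bookkeeping to get $\mathrm{Ext}^i_R(R/\mathfrak{a},M)\in\mathcal{NS}$ for $i\leq n+1$, and then Theorem \ref{lem:3.2}. Your explicit justification of the composite-functor setup and of why the $E_2$-terms lie in $\mathcal{NS}$ is a welcome addition of detail the paper leaves implicit.
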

\begin{proof} Consider the Grothendieck spectral sequence
\begin{center}$\xymatrix@C=10pt@R=5pt{
 E_2^{p,q}=\mathrm{Ext}^{p}_R(R/\mathfrak{a},\mathrm{H}^{q}_\mathfrak{b}(M))\ar@{=>}[r]_{\ \ \ \ \ \ p}&
 \mathrm{Ext}^{p+q}_R(R/\mathfrak{a},M).}$\end{center}For $0\leq i\leq n+1$, there
exists a finite filtration
\begin{center}$
 0=\Phi^{i+1}H^{i}\subseteq \Phi^{i}H^{i}\subseteq\cdots\subseteq \Phi^{1}H^{i}\subseteq \Phi^{0}H^{i}=H^{i}:=\mathrm{Ext}^{i}_R(R/\mathfrak{a},M)$,
\end{center}such that $\Phi^{p}H^{i}/\Phi^{p+1}H^{i}\cong E_\infty^{p,i-p}$ for $0\leq p\leq i$. As $E_\infty^{p,i-p}$ is a subquotient of $E_2^{p,i-p}$, a successive use of the exact sequence
\begin{center}$
 0\rightarrow \Phi^{p+1}H^i\rightarrow \Phi^{p}H^i\rightarrow \Phi^{p}H^i/\Phi^{p+1}H^i\rightarrow0$
\end{center} implies that $\mathrm{Ext}^{i}_R(R/\mathfrak{a},M)\in\mathcal{NS}$ for $i\leq n+1$, and hence, by Theorem \ref{lem:3.2}, $\mathrm{H}^i_\mathfrak{a}(M)$ are $\mathcal{NS}$-$\mathfrak{a}$-cofinite for all $i\leq n$.
\end{proof}

\begin{cor}\label{lem:3.22}{\it{Let $M$ be a weakly Laskerian $R$-module such that $\mathrm{Ext}^{i}_R(R/\mathfrak{a},M)\in\mathcal{NS}$ for $i=0,1$. If either $\mathrm{dim}R/\mathfrak{a}\leq 1$ or $\mathrm{dim}R\leq 2$, then $\mathrm{H}^i_\mathfrak{a}(M)$ is $\mathcal{NS}$-$\mathfrak{a}$-cofinite for every $i\geq 0$.}}
\end{cor}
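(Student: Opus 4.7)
The plan is to chain together two results that are already in place: Corollary \ref{lem:2.4'} (which upgrades $\mathrm{Ext}$-vanishing in low degrees to $\mathcal{NS}$-$\mathfrak{a}$-cofiniteness for weakly Laskerian modules) and Theorem \ref{lem:3.2} (which, under exactly our dimensional hypothesis, transfers $\mathrm{Ext}$ information about $M$ into $\mathcal{NS}$-$\mathfrak{a}$-cofiniteness of each local cohomology module $\mathrm{H}^i_{\mathfrak{a}}(M)$).

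First I would apply Corollary \ref{lem:2.4'} directly to $M$: its hypotheses are exactly that $M$ is weakly Laskerian and $\mathrm{Ext}^{i}_R(R/\mathfrak{a},M)\in \mathcal{NS}$ for $i=0,1$, both of which are in force. This produces the conclusion that $M$ itself is $\mathcal{NS}$-$\mathfrak{a}$-cofinite, which by the definition of $\mathcal{NS}$-$\mathfrak{a}$-cofiniteness upgrades the Ext-hypothesis from the two degrees $i=0,1$ to the vanishing $\mathrm{Ext}^{i}_R(R/\mathfrak{a},M)\in \mathcal{NS}$ for all $i\geq 0$, and also supplies $\mathrm{Supp}_R M\subseteq \mathrm{V}(\mathfrak{a})$ for free.

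Second, I would feed this into Theorem \ref{lem:3.2}. The dimensional assumption ``$\mathrm{dim}R/\mathfrak{a}\leq 1$ or $\mathrm{dim}R\leq 2$'' is precisely one of the allowed cases of that theorem, so the biconditional there is available. For any fixed $n\geq 0$, the ``if'' direction converts the input ``$\mathrm{Ext}^{i}_R(R/\mathfrak{a},M)\in \mathcal{NS}$ for all $i\leq n+1$'' (which we just established) into ``$\mathrm{H}^{i}_{\mathfrak{a}}(M)$ is $\mathcal{NS}$-$\mathfrak{a}$-cofinite for all $i\leq n$.'' Since $n$ is arbitrary, the conclusion holds for every $i\geq 0$.

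There is no real obstacle here: once Corollary \ref{lem:2.4'} lifts the two-degree hypothesis to the global Ext-cofiniteness statement, Theorem \ref{lem:3.2} handles every $n$ uniformly. The only subtlety worth flagging in the write-up is the order of operations, namely that Corollary \ref{lem:2.4'} is invoked without any dimensional restriction, after which the dimensional hypothesis of the present corollary is deployed solely to activate Theorem \ref{lem:3.2}.
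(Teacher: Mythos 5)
Your argument is correct and is essentially the paper's own proof: the paper performs the Bahmanpour decomposition $0\to N\to M\to F\to 0$ explicitly and applies Theorem \ref{lem:2.3} to $F$, which is precisely the content of Corollary \ref{lem:2.4'} that you invoke as a black box, and both arguments then conclude with Theorem \ref{lem:3.2} applied for every $n$. (One small slip of terminology: the half of Theorem \ref{lem:3.2} you use, passing from the Ext condition to cofiniteness of the $\mathrm{H}^i_\mathfrak{a}(M)$, is its ``only if'' direction, not its ``if'' direction.)
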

\begin{proof} As $M$ is weakly Laskerian, there is an exact sequence $0\rightarrow N\rightarrow M\rightarrow F\rightarrow0$ so that $N\in\mathcal{N}$ and $F\in\mathcal{F}$ by \cite[Theorem 3.3]{Ba}. Then $F$ is $\mathcal{NS}$-$\mathfrak{a}$-cofinite by Theorem \ref{lem:2.3}. Hence the above sequence and Theorem \ref{lem:3.2} yield the desired statement.
\end{proof}

The next corollary is a more general version of \cite[Theorem 2.15]{BN} and \cite[Theorem 2.6]{Ma}.

\begin{cor}\label{lem:3.2''}{\it{Let $\mathfrak{a},\mathfrak{b}$ be two ideals of $R$ with $\mathfrak{b}\subseteq\mathfrak{a}$, $n$ a non-negative integer and $M$ be an $R$-module such that $\mathrm{Ext}^{i}_R(R/\mathfrak{b},M)\in\mathcal{NS}$ for $i\leq n+1$. If $\mathrm{dim}R/\mathfrak{a}=\mathrm{dim}R/\mathfrak{b}\leq 1$, then $\mathrm{H}^i_\mathfrak{a}(\mathrm{H}^j_\mathfrak{b}(M))$ is $\mathcal{NS}$-$\mathfrak{a}$-cofinite for all $i\geq0$ and $j\leq n$.}}
\end{cor}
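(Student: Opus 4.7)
The plan is to sandwich Theorem \ref{lem:3.2} between itself and Corollary \ref{lem:2.7'}: first convert the Ext-hypothesis on $\mathfrak{b}$ into $\mathcal{NS}$-$\mathfrak{b}$-cofiniteness of each $N_j:=\mathrm{H}^j_\mathfrak{b}(M)$, then transfer this cofiniteness to Ext information over the ideal $\mathfrak{a}$, and finally invoke Theorem \ref{lem:3.2} once more---now with the ideal $\mathfrak{a}$---to pass back to local cohomology.

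Concretely, since $\mathrm{dim}R/\mathfrak{b}\leq 1$ and $\mathrm{Ext}^i_R(R/\mathfrak{b},M)\in\mathcal{NS}$ for $i\leq n+1$, Theorem \ref{lem:3.2} applied with the ideal $\mathfrak{b}$ yields that $N_j$ is $\mathcal{NS}$-$\mathfrak{b}$-cofinite for every $j\leq n$. Fixing such a $j$, the $\mathfrak{b}$-torsion of $N_j$ gives $\mathrm{dim}_RN_j\leq\mathrm{dim}R/\mathfrak{b}\leq 1$, so Corollary \ref{lem:2.7'}, applied to the finitely generated $R$-module $R/\mathfrak{a}$ and the one-dimensional $\mathcal{NS}$-$\mathfrak{b}$-cofinite module $N_j$, gives that $\mathrm{Ext}^k_R(R/\mathfrak{a},N_j)$ is $\mathcal{NS}$-$\mathfrak{b}$-cofinite for every $k\geq 0$.

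The key observation now is that $\mathrm{Ext}^k_R(R/\mathfrak{a},N_j)$ is an $R/\mathfrak{a}$-module, hence annihilated by $\mathfrak{b}\subseteq\mathfrak{a}$; in particular
\begin{center}
$\mathrm{Hom}_R(R/\mathfrak{b},\mathrm{Ext}^k_R(R/\mathfrak{a},N_j))=\mathrm{Ext}^k_R(R/\mathfrak{a},N_j)$,
\end{center}
and the left-hand side lies in $\mathcal{NS}$ by the preceding $\mathcal{NS}$-$\mathfrak{b}$-cofiniteness. Thus $\mathrm{Ext}^k_R(R/\mathfrak{a},N_j)\in\mathcal{NS}$ for all $k\geq 0$. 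Since $\mathrm{dim}R/\mathfrak{a}\leq 1$, Theorem \ref{lem:3.2} applied to $N_j$ with the ideal $\mathfrak{a}$ (its hypothesis holding for every $n$) then delivers $\mathcal{NS}$-$\mathfrak{a}$-cofiniteness of $\mathrm{H}^i_\mathfrak{a}(N_j)=\mathrm{H}^i_\mathfrak{a}(\mathrm{H}^j_\mathfrak{b}(M))$ for all $i\geq 0$.

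The only nontrivial step is the middle one, where $\mathcal{NS}$-$\mathfrak{b}$-cofiniteness of $\mathrm{Ext}^k_R(R/\mathfrak{a},N_j)$ must be promoted to ordinary membership in $\mathcal{NS}$. The trivialisation $\mathrm{Hom}_R(R/\mathfrak{b},-)=\mathrm{id}$ on $R/\mathfrak{a}$-modules dispatches this at no cost; without such an observation one would otherwise be forced into a Grothendieck spectral sequence argument akin to that in the proof of Proposition \ref{lem:3.11}.
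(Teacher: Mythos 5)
Your proof is correct, and its skeleton is the same as the paper's: apply Theorem \ref{lem:3.2} with the ideal $\mathfrak{b}$ to get $\mathcal{NS}$-$\mathfrak{b}$-cofiniteness of the modules $\mathrm{H}^j_\mathfrak{b}(M)$ for $j\leq n$, bridge to the statement $\mathrm{Ext}^i_R(R/\mathfrak{a},\mathrm{H}^j_\mathfrak{b}(M))\in\mathcal{NS}$ for all $i$, and then apply Theorem \ref{lem:3.2} again with the ideal $\mathfrak{a}$. The only divergence is in the bridging step, which the paper dispatches with an unexplained ``which implies'' (implicitly a change-of-support lemma such as [AS, Lemma 2.1]: an $\mathcal{NS}$-$\mathfrak{b}$-cofinite module has $\mathrm{Ext}^i_R(N,-)\in\mathcal{NS}$ for any finitely generated $N$ supported in $\mathrm{V}(\mathfrak{b})$, and $\mathrm{Supp}_R(R/\mathfrak{a})\subseteq\mathrm{V}(\mathfrak{b})$). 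You instead route through Corollary \ref{lem:2.7'} applied over $\mathfrak{b}$ --- legitimate because $\mathrm{H}^j_\mathfrak{b}(M)$ is $\mathfrak{b}$-torsion and hence of dimension at most $\mathrm{dim}R/\mathfrak{b}\leq 1$ --- and then observe that $\mathrm{Ext}^k_R(R/\mathfrak{a},-)$ lands in $R/\mathfrak{a}$-modules, on which $\mathrm{Hom}_R(R/\mathfrak{b},-)$ is the identity, so $\mathcal{NS}$-$\mathfrak{b}$-cofiniteness collapses to membership in $\mathcal{NS}$. This is slightly more roundabout but entirely self-contained within the paper's own stated results, and it makes explicit a step the paper glosses over. (Both arguments share the same implicit caveat that applying the machinery over $\mathfrak{b}$ uses condition $C_\mathfrak{b}$ rather than the standing $C_\mathfrak{a}$; this is not a defect specific to your write-up.)
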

\begin{proof} By Theorem \ref{lem:3.2}, one has $\mathrm{H}^j_\mathfrak{b}(M)$ are $\mathcal{NS}$-$\mathfrak{b}$-cofinite for all $j\leq n$, which implies that $\mathrm{Ext}^{i}_R(R/\mathfrak{a},\mathrm{H}^j_\mathfrak{b}(M))\in\mathcal{NS}$ for all $i$ and $j\leq n$. Hence $\mathrm{H}^i_\mathfrak{a}(\mathrm{H}^j_\mathfrak{b}((M))$ are $\mathcal{NS}$-$\mathfrak{a}$-cofinite for all $i\geq0$ and $j\leq n$ by Theorem \ref{lem:3.2} again.
\end{proof}

The next corollary is a generalization of \cite[Corollary 3.14 and Theorem 7.10]{LM} and \cite[Corllary 2.12]{LM1}.

\begin{cor}\label{lem:3.02'}{\it{If either $\mathrm{dim}R/\mathfrak{a}\leq 1$ or $\mathrm{dim}R\leq 2$ or $\mathrm{cd}(\mathfrak{a},R)\leq1$, then $\mathrm{H}^i_\mathfrak{a}(M)$ is $\mathcal{NS}$-$\mathfrak{a}$-cofinite for any $M\in\mathcal{NS}$ and every $i\geq0$.}}
\end{cor}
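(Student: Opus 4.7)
For $M\in \mathcal{NS}$ fix an exact sequence $0\to N\to M\to S\to 0$ with $N$ finitely generated and $S\in\mathcal{S}$. The first step is to show that $\mathrm{Ext}^i_R(R/\mathfrak{a},M)\in\mathcal{NS}$ for every $i\geq 0$. Indeed, $\mathrm{Ext}^i_R(R/\mathfrak{a},N)$ is finitely generated, hence in $\mathcal{N}\subseteq \mathcal{NS}$, and $\mathrm{Ext}^i_R(R/\mathfrak{a},S)\in \mathcal{NS}$ by \cite[Lemma 2.1]{AT}; the long exact sequence of $\mathrm{Ext}^\ast_R(R/\mathfrak{a},-)$ together with the Serre property of $\mathcal{NS}$ then sandwiches $\mathrm{Ext}^i_R(R/\mathfrak{a},M)$ into $\mathcal{NS}$.

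For the first two hypotheses, $\mathrm{dim}R/\mathfrak{a}\leq 1$ or $\mathrm{dim}R\leq 2$, this Ext input allows me to apply Theorem \ref{lem:3.2} with $n$ taken arbitrarily large and deduce that $\mathrm{H}^i_\mathfrak{a}(M)$ is $\mathcal{NS}$-$\mathfrak{a}$-cofinite for every $i\geq 0$.

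For the case $\mathrm{cd}(\mathfrak{a},R)\leq 1$, Theorem \ref{lem:3.2} does not apply directly. Here $\mathrm{H}^i_\mathfrak{a}(M)=0$ for $i\geq 2$, so only $\mathrm{H}^0_\mathfrak{a}(M)$ and $\mathrm{H}^1_\mathfrak{a}(M)$ need attention. For $\mathrm{H}^0_\mathfrak{a}(M)=\Gamma_\mathfrak{a}(M)$: this submodule of $M$ again lies in $\mathcal{NS}$, and reapplying the first step to $\Gamma_\mathfrak{a}(M)$ in place of $M$ yields $\mathrm{Ext}^j_R(R/\mathfrak{a},\Gamma_\mathfrak{a}(M))\in \mathcal{NS}$ for all $j$, whence $\Gamma_\mathfrak{a}(M)$ is $\mathcal{NS}$-$\mathfrak{a}$-cofinite. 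For $\mathrm{H}^1_\mathfrak{a}(M)$, I would invoke the Grothendieck spectral sequence
$$E_2^{p,q}=\mathrm{Ext}^p_R(R/\mathfrak{a},\mathrm{H}^q_\mathfrak{a}(M))\Longrightarrow \mathrm{Ext}^{p+q}_R(R/\mathfrak{a},M)$$
(the same kind used in the proof of Corollary \ref{lem:3.2'}). Since only rows $q=0$ and $q=1$ survive, the spectral sequence collapses to a long exact sequence interlacing $E_2^{\ast,0}$, the abutment $\mathrm{Ext}^\ast_R(R/\mathfrak{a},M)$, and $E_2^{\ast,1}$. Because the $q=0$ row lies in $\mathcal{NS}$ by the $\mathrm{H}^0$-case above and the abutment lies in $\mathcal{NS}$ by the first step, a dévissage along this long exact sequence (using that $\mathcal{NS}$ is Serre) forces $E_2^{p,1}=\mathrm{Ext}^p_R(R/\mathfrak{a},\mathrm{H}^1_\mathfrak{a}(M))\in \mathcal{NS}$ for every $p\geq 0$; combined with $\mathrm{Supp}_R\mathrm{H}^1_\mathfrak{a}(M)\subseteq \mathrm{V}(\mathfrak{a})$, this gives the $\mathcal{NS}$-$\mathfrak{a}$-cofiniteness of $\mathrm{H}^1_\mathfrak{a}(M)$.

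The principal obstacle is the $\mathrm{cd}(\mathfrak{a},R)\leq 1$ case, where none of the dimension hypotheses of Theorem \ref{lem:3.2} is available, so one must set up the two-row Grothendieck spectral sequence above and verify carefully that the Serre dévissage really does propagate through its differentials to pin down every $E_2^{p,1}$.
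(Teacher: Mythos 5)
Your argument is correct and, for the hypotheses $\mathrm{dim}R/\mathfrak{a}\leq 1$ and $\mathrm{dim}R\leq 2$, coincides with the paper's proof: establish $\mathrm{Ext}^i_R(R/\mathfrak{a},M)\in\mathcal{NS}$ for all $i$ from the defining extension $0\to N\to M\to S\to 0$ and then apply Theorem \ref{lem:3.2} with $n$ arbitrary. The only divergence is in the case $\mathrm{cd}(\mathfrak{a},R)\leq 1$, which the paper disposes of by citing \cite[Theorem 2.9]{BA} (with $s$ large), whereas you make the content of that citation explicit via the two-row Grothendieck spectral sequence: the resulting long exact sequence places $\mathrm{Ext}^p_R(R/\mathfrak{a},\mathrm{H}^1_\mathfrak{a}(M))$ between $\mathrm{Ext}^{p+1}_R(R/\mathfrak{a},M)$ and $\mathrm{Ext}^{p+2}_R(R/\mathfrak{a},\Gamma_\mathfrak{a}(M))$, both already known to lie in $\mathcal{NS}$, so the Serre property closes the argument. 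This buys a self-contained proof of the third case at the cost of a few extra lines; no gap.
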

\begin{proof} This follows from Theorem \ref{lem:3.2} and \cite[Theorem 2.9]{BA}.
\end{proof}

\begin{cor}\label{lem:3.4}{\it{Let $M\neq 0$ be in $\mathcal{NS}$ such that
$\mathrm{dim}_RM/\mathfrak{a}M\leq1$. Then for each finitely generated $R$-module $N$, the $R$-modules
$\mathrm{Ext}^i_R(N,\mathrm{H}^j_\mathfrak{a}(M))$ are
$\mathcal{NS}$-$\mathfrak{a}$-cofinite for all $i,j\geq 0$.}}
\end{cor}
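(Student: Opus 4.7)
The overall strategy is to reduce to Corollary~\ref{lem:2.7'} by first establishing the stronger intermediate claim that each $\mathrm{H}^j_\mathfrak{a}(M)$ is itself $\mathcal{NS}$-$\mathfrak{a}$-cofinite with $\dim_R\mathrm{H}^j_\mathfrak{a}(M)\le 1$. Once that is in hand, Corollary~\ref{lem:2.7'} applied to the pair $(N,\mathrm{H}^j_\mathfrak{a}(M))$ delivers the desired conclusion for all $i,j\ge 0$.

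To prove the intermediate claim I would pick the defining exact sequence $0\to N_0\to M\to S_0\to 0$ guaranteed by $M\in\mathcal{NS}$, with $N_0\in\mathcal{N}$ and $S_0\in\mathcal{S}$, and run the associated long exact sequence of local cohomology. Tensoring the sequence with $R/\mathfrak{a}$ shows that $S_0/\mathfrak{a}S_0$ is a quotient of $M/\mathfrak{a}M$, hence has dimension $\le 1$, while the kernel of $N_0/\mathfrak{a}N_0\to M/\mathfrak{a}M$ is a quotient of $\mathrm{Tor}_1^R(S_0,R/\mathfrak{a})\in\mathcal{S}$; absorbing this $\mathcal{S}$-error lets me replace $N_0$ by a submodule $N_0'$ with $\dim N_0'/\mathfrak{a}N_0'\le 1$ and $N_0/N_0'\in\mathcal{S}$. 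For $N_0'$ the classical Bahmanpour--Naghipour result yields that each $\mathrm{H}^j_\mathfrak{a}(N_0')$ is $\mathfrak{a}$-cofinite (hence $\mathcal{NS}$-$\mathfrak{a}$-cofinite) of dimension $\le 1$. For $S_0$, the condition $C_\mathfrak{a}$ forces $\Gamma_\mathfrak{a}(S_0)\in\mathcal{S}$; choosing an $S_0/\Gamma_\mathfrak{a}(S_0)$-regular element $x\in\mathfrak{a}$ and using the short exact sequence $0\to S_0/\Gamma_\mathfrak{a}(S_0)\xrightarrow{\,x\,}S_0/\Gamma_\mathfrak{a}(S_0)\to (S_0/\Gamma_\mathfrak{a}(S_0))/x\to 0$ dimension-shifts the higher $\mathrm{H}^j_\mathfrak{a}(S_0)$ down to the dimension-zero setting, which is handled via Lemma~\ref{lem:2.1} combined with $\mathrm{Ext}^i_R(R/\mathfrak{a},S_0)\in\mathcal{NS}$ (via \cite[Lemma 2.1]{AT}). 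Splicing the two pieces together through the long exact sequence, and invoking that $\mathcal{NS}^1(R,\mathfrak{a})_{cof}$ is abelian (Corollary~\ref{lem:2.7}(1)), I conclude that $\mathrm{H}^j_\mathfrak{a}(M)$ is $\mathcal{NS}$-$\mathfrak{a}$-cofinite of dimension $\le 1$ for every $j\ge 0$.

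The delicate step, and where I expect the real work to be, is controlling dimensions from the weak hypothesis $\dim_R M/\mathfrak{a}M\le 1$ rather than $\dim_R M\le 1$: the natural map $N_0/\mathfrak{a}N_0\to M/\mathfrak{a}M$ need not be injective and the higher $\mathrm{H}^j_\mathfrak{a}(S_0)$ are not automatically in $\mathcal{S}$, so both legs of the splicing require careful bookkeeping to ensure the $\mathcal{S}$-errors introduced along the way are absorbed inside the Serre subcategory $\mathcal{NS}^1(R,\mathfrak{a})_{cof}$. Once that bookkeeping is in place, the closing invocation of Corollary~\ref{lem:2.7'} is immediate.
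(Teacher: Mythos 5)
Your endgame coincides with the paper's: both proofs reduce to showing that each $\mathrm{H}^j_\mathfrak{a}(M)$ is $\mathcal{NS}$-$\mathfrak{a}$-cofinite of dimension $\le 1$ and then invoke Corollary \ref{lem:2.7'}. But your proposed proof of that intermediate claim has a genuine gap, concentrated in the $S_0$-leg. First, for a module $L$ that is not finitely generated, $\Gamma_\mathfrak{a}(L)=0$ does \emph{not} imply that $\mathfrak{a}$ contains an $L$-regular element: the zerodivisors on $L$ are the union of the (possibly infinitely many) associated primes, and prime avoidance fails for infinite unions (e.g.\ $L=\bigoplus_{\mathrm{ht}\,\mathfrak{p}=1}R/\mathfrak{p}$ over $k[x,y]$ with $\mathfrak{a}=(x,y)$). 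Since $S_0\in\mathcal{S}$ is arbitrary, your choice of an $S_0/\Gamma_\mathfrak{a}(S_0)$-regular element in $\mathfrak{a}$ is unjustified, so the dimension-shifting step does not get off the ground. Second, even granting such an element, the modules $\mathrm{H}^j_\mathfrak{a}(S_0)$ only satisfy $\mathrm{Supp}_R\mathrm{H}^j_\mathfrak{a}(S_0)\subseteq\mathrm{Supp}_RM/\mathfrak{a}M$, i.e.\ dimension $\le 1$, not $0$, so Lemma \ref{lem:2.1} is not applicable; you would need Theorem \ref{lem:2.3}, and you give no mechanism for verifying $\mathrm{Ext}^i_R(R/\mathfrak{a},\mathrm{H}^j_\mathfrak{a}(S_0))\in\mathcal{NS}$ for $i=0,1$ when $j\ge 1$ (Serre subcategories are not closed under direct limits, so $\mathrm{H}^j_\mathfrak{a}(S_0)$ need not lie in $\mathcal{S}$). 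The missing ingredient is precisely the spectral-sequence bookkeeping of \cite[Theorem 2.9]{BA} relating $\mathrm{Ext}^i_R(R/\mathfrak{a},M)$ to $\mathrm{Ext}^p_R(R/\mathfrak{a},\mathrm{H}^q_\mathfrak{a}(M))$, which is what Theorem \ref{lem:3.2} packages.

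The paper's route is much shorter and avoids decomposing $M$ at all: since $M\in\mathcal{NS}$, one has $\mathrm{Ext}^i_R(R/\mathfrak{a},M)\in\mathcal{NS}$ for every $i$, and $\mathrm{Supp}_R\mathrm{H}^j_\mathfrak{a}(M)\subseteq\mathrm{Supp}_RM/\mathfrak{a}M$ has dimension $\le 1$, so Theorem \ref{lem:3.2} (whose proof only uses the dimension-$\le 1$ criterion on the local cohomology modules themselves) gives the cofiniteness of all $\mathrm{H}^j_\mathfrak{a}(M)$ directly; Corollary \ref{lem:2.7'} then finishes. Incidentally, your preparatory manipulation of $N_0$ is unnecessary: since $N_0$ is a submodule of $M$, one has $\mathrm{Supp}_R(N_0/\mathfrak{a}N_0)=\mathrm{Supp}_RN_0\cap\mathrm{V}(\mathfrak{a})\subseteq\mathrm{Supp}_R(M/\mathfrak{a}M)$ without any $\mathrm{Tor}$-correction, so the passage to a submodule $N_0'$ (which as described is not clearly well defined) can be dropped.
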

\begin{proof} As $\mathrm{Supp}_R\mathrm{H}^j_\mathfrak{a}(M)\subseteq\mathrm{Supp}_RM/\mathfrak{a}M$, it follows from Theorem \ref{lem:3.2} that
$\mathrm{H}^j_\mathfrak{a}(M)$ are
$\mathcal{NS}$-$\mathfrak{a}$-cofinite for all $j\geq 0$. Now the assertion follows from Corollary \ref{lem:2.7}.
\end{proof}

The following proposition is a more general version of \cite[Theorem 3.7]{BNS}.

\begin{prop}\label{lem:3.3}{\it{Let $n$ be a non-negative integer such that $\mathrm{Ext}^i_R(R/\mathfrak{a},M)\in\mathcal{NS}$ for all $i\leq n+1$. If either $\mathrm{dim}R=d\geq3$ or $\mathrm{dim}R/\mathfrak{a}=d-1$, then $\mathrm{H}^{i}_\mathfrak{a}(M)$ is $\mathcal{NS}$-$\mathfrak{a}$-cofinite for $i<n$ if and only if $\mathrm{Hom}_R(R/\mathfrak{a},\mathrm{H}^{i+d-3}_\mathfrak{a}(M)),\cdots,\mathrm{Ext}^{d-3}_R(R/\mathfrak{a},
 \mathrm{H}^{i}_\mathfrak{a}(M))\in\mathcal{NS}$ for $i\leq n$.}}
\end{prop}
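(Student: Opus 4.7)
The proof rests on the Grothendieck spectral sequence
$$E_2^{p,q}=\mathrm{Ext}^p_R(R/\mathfrak{a}, \mathrm{H}^q_\mathfrak{a}(M))\Longrightarrow \mathrm{Ext}^{p+q}_R(R/\mathfrak{a}, M),$$
arising from the identity $\mathrm{Hom}_R(R/\mathfrak{a},-)\cong \mathrm{Hom}_R(R/\mathfrak{a},-)\circ\Gamma_\mathfrak{a}$, combined with the characterization provided by Theorem \ref{lem:2.10} and Corollary \ref{lem:2.6}: under the standing dimension hypothesis, an $\mathfrak{a}$-torsion $R$-module $X$ is $\mathcal{NS}$-$\mathfrak{a}$-cofinite if and only if $\mathrm{Ext}^j_R(R/\mathfrak{a}, X)\in\mathcal{NS}$ for all $0\leq j\leq d-1$. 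Since each $\mathrm{H}^k_\mathfrak{a}(M)$ is $\mathfrak{a}$-torsion, both directions of the proposition become statements about which $E_2^{p,q}$ entries lie in $\mathcal{NS}$. The assumption on $\mathrm{Ext}^i_R(R/\mathfrak{a}, M)$ for $i\leq n+1$ says exactly that $E_\infty^{p,q}\in\mathcal{NS}$ whenever $p+q\leq n+1$.

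For the ``only if'' direction, assume $\mathrm{H}^k_\mathfrak{a}(M)$ is $\mathcal{NS}$-$\mathfrak{a}$-cofinite for $k<n$. Then $E_2^{p,q}\in\mathcal{NS}$ for every $p$ and $q<n$. The terms to verify lie on the anti-diagonals $p+q=i+d-3$ with $p\leq d-3$ and $i\leq n$; for $i<n$ they are already in $\mathcal{NS}$ because the target $\mathrm{H}^{i+d-3-p}_\mathfrak{a}(M)$ has index at most $n-1$, so only $i=n$ requires work. There I would walk along the anti-diagonal from $(p,q)=(d-3,n)$ outward, lifting membership in $\mathcal{NS}$ from $E_\infty^{p,q}$ back to $E_2^{p,q}$ by using that (a) $E_\infty^{p,q}\in\mathcal{NS}$ whenever $p+q\leq n+1$, (b) every differential $d_r$ with at least one endpoint on a row $q<n$ has that endpoint in $\mathcal{NS}$, and (c) the Serre property of $\mathcal{NS}$ applies to the short exact sequences expressing $E_{r+1}^{p,q}$ as a subquotient of $E_r^{p,q}$.

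For the ``if'' direction, which is the substantive one, I would induct on $k\in\{0,1,\dots,n-1\}$ with the claim that $\mathrm{H}^k_\mathfrak{a}(M)$ is $\mathcal{NS}$-$\mathfrak{a}$-cofinite. The inductive hypothesis places every row $q<k$ of the $E_2$ page in $\mathcal{NS}$, so it suffices to check $E_2^{j,k}\in\mathcal{NS}$ for $0\leq j\leq d-1$. Fix such a $j$. Outgoing differentials of $E_r^{j,k}$ land in $E_r^{j+r,k-r+1}$, a subquotient of $E_2^{j+r,k-r+1}\in\mathcal{NS}$ by the inductive hypothesis; incoming differentials originate from $E_r^{j-r,k+r-1}$ on the anti-diagonal $p+q=j+k-1$, whose membership in $\mathcal{NS}$ is supplied by the proposition's anti-diagonal hypothesis (for the appropriate value of $i$) together with an inner induction on $j$; and $E_\infty^{j,k}$ is a subquotient of $\mathrm{Ext}^{j+k}_R(R/\mathfrak{a},M)$, which lies in $\mathcal{NS}$ when $j+k\leq n+1$. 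Chaining these through the short exact sequences relating consecutive pages yields $E_2^{j,k}\in\mathcal{NS}$, and Corollary \ref{lem:2.6} or Theorem \ref{lem:2.10} then closes the inductive step.

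The main obstacle is the regime $j+k>n+1$, where the abutment no longer directly forces $E_\infty^{j,k}\in\mathcal{NS}$ and one must rely on the anti-diagonal hypothesis to feed the inductive engine. The ``$d-3$ window'' appearing in the statement is engineered precisely to bridge this gap, and the delicate part is ordering the joint induction on $(k,j)$ so that every required incoming-differential source on the anti-diagonal $p+q=j+k-1$ has already been placed in $\mathcal{NS}$ before it is needed.
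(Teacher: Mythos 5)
Your overall strategy---the Grothendieck spectral sequence $E_2^{p,q}=\mathrm{Ext}^p_R(R/\mathfrak{a},\mathrm{H}^q_\mathfrak{a}(M))\Rightarrow\mathrm{Ext}^{p+q}_R(R/\mathfrak{a},M)$ combined with the finite characterizations in Theorem \ref{lem:2.10} and Corollary \ref{lem:2.6}---is exactly what underlies the paper's argument; the paper's entire proof is the citation of \cite[Theorem 2.9]{BA}, which encapsulates precisely this spectral-sequence bookkeeping. The difficulty is that the bookkeeping \emph{is} the content of the proposition, and your write-up does not carry it out: one step is wrong and the decisive step is explicitly deferred. In the ``only if'' direction, your claim that for $i<n$ the window terms are automatic ``because the target $\mathrm{H}^{i+d-3-p}_\mathfrak{a}(M)$ has index at most $n-1$'' is false once $d\geq 4$: on the anti-diagonal $p+q=i+d-3$ with $0\leq p\leq d-3$ the row index $q$ runs from $i$ up to $i+d-3$, which for $i=n-1$ reaches $n+d-4\geq n$. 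Those entries involve local cohomology modules not covered by the assumption that $\mathrm{H}^k_\mathfrak{a}(M)$ is cofinite for $k<n$, and for $p+q>n+1$ the abutment does not control them either; your reduction to the single case $i=n$ is valid only for $d=3$, where the window is the lone term $\mathrm{Hom}_R(R/\mathfrak{a},\mathrm{H}^i_\mathfrak{a}(M))$.

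In the ``if'' direction, which you correctly identify as the substantive one, the proof is reduced to ``ordering the joint induction on $(k,j)$ so that every required incoming-differential source \dots\ has already been placed in $\mathcal{NS}$ before it is needed''---and that ordering is precisely what is missing. Concretely, to obtain $E_2^{j,k}\in\mathcal{NS}$ for all $j\leq d-1$ and $k<n$ you must control the incoming differentials from $E_r^{j-r,k+r-1}$: these lie on the anti-diagonal $p+q=j+k-1$ but in rows strictly above $k$, so the row induction gives nothing, and the window hypothesis reaches them only when $j-r\leq d-3$ and $0\leq j+k+2-d\leq n$, conditions that fail for part of the range $j\leq d-1$, $k\leq n-1$; meanwhile $E_\infty^{j,k}$ is pinned by the abutment only when $j+k\leq n+1$, whereas you need $j+k$ up to $n+d-2$. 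Until you exhibit an explicit induction order and check that every entry invoked at each stage lies in a previously handled row, in the hypothesis window, or under the abutment bound, the argument is a plan rather than a proof. Since this verification is exactly the statement of \cite[Theorem 2.9]{BA}, you should either reproduce that induction in full or simply cite it together with Theorem \ref{lem:2.10} and Corollary \ref{lem:2.6}, as the paper does.
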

\begin{proof} This follows from \cite[Theorem 2.9]{BA} and Theorem \ref{lem:2.10} and Corollary \ref{lem:2.6}.
\end{proof}

The next result is a generalization of \cite[Proposition 5.1]{LM}.

\begin{prop}\label{lem:3.9}{\it{Let $M\in\mathcal{NS}$ be an $R$-module of dimesnsion $d$. Then the top local cohomology module
$\mathrm{H}_\mathfrak{a}^d(M)$ is $\mathcal{NS}$-$\mathfrak{a}$-cofinite of zero dimension.}}
\end{prop}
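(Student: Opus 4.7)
The plan is to combine a dimension count for $\mathrm{H}^d_\mathfrak{a}(M)$ with a long-exact-sequence analysis of the $\mathcal{NS}$-defining filtration of $M$, reducing the proof to an inductive spectral-sequence control of the $\mathcal{S}$-part.

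First, I will show $\dim_R \mathrm{H}^d_\mathfrak{a}(M) \leq 0$: for any $\mathfrak{p} \in \mathrm{Supp}_R \mathrm{H}^d_\mathfrak{a}(M)$, the non-vanishing of $\mathrm{H}^d_{\mathfrak{a}R_\mathfrak{p}}(M_\mathfrak{p})$ forces $\dim_{R_\mathfrak{p}} M_\mathfrak{p} \geq d$ by Grothendieck vanishing, while the inequality $\dim_{R_\mathfrak{p}} M_\mathfrak{p} + \dim R/\mathfrak{p} \leq \dim_R M = d$ (valid for any module, via concatenating chains in $\mathrm{Supp}_{R_\mathfrak{p}} M_\mathfrak{p}$ with chains in $\mathrm{V}(\mathfrak{p})$) yields $\dim R/\mathfrak{p} = 0$, so $\mathfrak{p}$ is maximal. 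Next, using an exact sequence $0 \to N \to M \to S \to 0$ with $N \in \mathcal{N}$ and $S \in \mathcal{S}$, both of dimension at most $d$, the vanishing $\mathrm{H}^{d+1}_\mathfrak{a}(N) = 0$ and the local cohomology long exact sequence produce
\[ 0 \to K \to \mathrm{H}^d_\mathfrak{a}(M) \to \mathrm{H}^d_\mathfrak{a}(S) \to 0, \]
where $K$ is the image of $\mathrm{H}^d_\mathfrak{a}(N) \to \mathrm{H}^d_\mathfrak{a}(M)$. Since $\mathrm{H}^d_\mathfrak{a}(N)$ is artinian and $\mathfrak{a}$-cofinite by \cite[Proposition 5.1]{LM}, so is its quotient $K$, giving $\mathrm{Ext}^i_R(R/\mathfrak{a}, K) \in \mathcal{N}$ for every $i \geq 0$. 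Once $\mathrm{H}^d_\mathfrak{a}(S) \in \mathcal{S}$ is known, the $\mathrm{Ext}_R(R/\mathfrak{a}, -)$ long exact sequence then exhibits $\mathrm{Ext}^i_R(R/\mathfrak{a}, \mathrm{H}^d_\mathfrak{a}(M))$ as an extension of a submodule of an $\mathcal{S}$-module by a quotient of an $\mathcal{N}$-module, hence in $\mathcal{NS}$, delivering the desired $\mathcal{NS}$-$\mathfrak{a}$-cofiniteness.

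The main obstacle is establishing $\mathrm{H}^d_\mathfrak{a}(S) \in \mathcal{S}$. I plan to prove the stronger statement that $\mathrm{H}^j_\mathfrak{a}(S) \in \mathcal{S}$ for every $j \geq 0$ by induction on $j$. The base case $j = 0$ is immediate since $\Gamma_\mathfrak{a}(S) \subseteq S \in \mathcal{S}$. For the inductive step, I apply the Grothendieck spectral sequence
\[ E_2^{p,q} = \mathrm{Ext}^p_R(R/\mathfrak{a}, \mathrm{H}^q_\mathfrak{a}(S)) \Longrightarrow \mathrm{Ext}^{p+q}_R(R/\mathfrak{a}, S). \]
Noting that $\mathrm{Ext}^p_R(R/\mathfrak{a}, T) \in \mathcal{S}$ whenever $T \in \mathcal{S}$ (it is a subquotient of $T^{m_p}$ via a finite free resolution of $R/\mathfrak{a}$), the inductive hypothesis gives $E_2^{p,q} \in \mathcal{S}$ for all $q < j$, while the abutment $\mathrm{Ext}^j_R(R/\mathfrak{a}, S)$ already lies in $\mathcal{S}$. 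The convergence filtration then places $E_\infty^{0,j}$, as a quotient of $\mathrm{Ext}^j_R(R/\mathfrak{a}, S)$, into $\mathcal{S}$, and each successive quotient $E_r^{0,j}/E_{r+1}^{0,j}$ embeds into $E_r^{r, j-r+1}$, a subquotient of $E_2^{r, j-r+1} \in \mathcal{S}$ for $2 \leq r \leq j+1$ (after which the relevant differentials vanish). Consequently $E_2^{0,j}/E_\infty^{0,j}$, and hence $E_2^{0,j} = \mathrm{Hom}_R(R/\mathfrak{a}, \mathrm{H}^j_\mathfrak{a}(S))$, lies in $\mathcal{S}$; condition $C_\mathfrak{a}$ applied to the $\mathfrak{a}$-torsion module $\mathrm{H}^j_\mathfrak{a}(S)$ then delivers $\mathrm{H}^j_\mathfrak{a}(S) \in \mathcal{S}$, closing the induction.
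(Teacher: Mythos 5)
Your proof is correct, but it takes a genuinely different route from the paper's. The paper argues by induction on $d$: after replacing $M$ by $M/\Gamma_\mathfrak{a}(M)$ it picks $x\in\mathfrak{a}$ regular on $M$, reads off from the long exact sequence that $(0:_{\mathrm{H}^d_\mathfrak{a}(M)}x)$ is a quotient of $\mathrm{H}^{d-1}_\mathfrak{a}(M/xM)$ (handled by the inductive hypothesis together with Lemma \ref{lem:3.8}) while $\mathrm{H}^d_\mathfrak{a}(M)=x\mathrm{H}^d_\mathfrak{a}(M)$, and concludes by the Melkersson-type criterion \cite[Lemma 2.2]{AS}. You instead split $M$ along its defining sequence $0\to N\to M\to S\to 0$, obtain the dimension bound directly by localization and Grothendieck vanishing, dispose of the finitely generated part via \cite[Proposition 5.1]{LM} (using that artinian $\mathfrak{a}$-cofinite modules form a Serre class), and reduce everything to the claim that $\mathrm{H}^j_\mathfrak{a}(-)$ preserves $\mathcal{S}$, which you prove by the $\mathrm{Ext}$--$\Gamma_\mathfrak{a}$ spectral sequence and condition $C_\mathfrak{a}$. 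All the steps check out: the filtration argument on $E_2^{0,j}$ is the standard one, and the final long exact sequence does exhibit $\mathrm{Ext}^i_R(R/\mathfrak{a},\mathrm{H}^d_\mathfrak{a}(M))$ as a finitely generated submodule with quotient in $\mathcal{S}$, which is exactly membership in $\mathcal{NS}$. What your route buys: it avoids the need for an $M$-regular element in $\mathfrak{a}$ (a step that in the paper's proof tacitly requires $\mathrm{Ass}_R(M/\Gamma_\mathfrak{a}(M))$ to be finite for prime avoidance, which is not automatic for modules in $\mathcal{NS}$); your intermediate statement that $S\in\mathcal{S}$ implies $\mathrm{H}^j_\mathfrak{a}(S)\in\mathcal{S}$ is of independent interest (essentially a result of Aghapournahr and Melkersson \cite{AM}, and a companion to Proposition \ref{lem:3.10} with the hypothesis $R/\mathfrak{a}\in\mathcal{S}$ removed); and you obtain the finer structural conclusion that $\mathrm{H}^d_\mathfrak{a}(M)$ is an extension of a module in $\mathcal{S}$ by an artinian $\mathfrak{a}$-cofinite module. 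The cost is length and the spectral-sequence machinery, where the paper's induction is a few lines.
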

\begin{proof} We use induction on $d$. This is clear if $d=0$.
So assume that $d>0$ and replacing $M$ with $M/\Gamma_\mathfrak{a}(M)$, we may assume that $\mathfrak{a}$ contains an $M$-regular element $x$. By induction, $\mathrm{H}_\mathfrak{a}^{d-1}(M/xM)$ is $\mathcal{NS}$-$\mathfrak{a}$-cofinite of zero dimension. Then  the exact
sequence\begin{center}$\mathrm{H}_\mathfrak{a}^{d-1}(M/xM)\rightarrow\mathrm{H}_\mathfrak{a}^d(M)\stackrel{x}
\rightarrow\mathrm{H}_\mathfrak{a}^d(M)\rightarrow0$\end{center}and Lemma \ref{lem:3.8} imply that
$(0:_{\mathrm{H}_\mathfrak{a}^d(M)}x)$ is $\mathcal{NS}$-$\mathfrak{a}$-cofinite of zero dimension. Thus, by \cite[Lemma 2.2]{AS}, $\mathrm{H}_\mathfrak{a}^d(M)$ is $\mathcal{NS}$-$\mathfrak{a}$-cofinite of zero dimension.
\end{proof}

An $R$-module $M$ is \emph{minimax} if there is a
finitely generated submodule $N$ of $M$, such that $M/N$ is artinian.

\begin{cor}\label{lem:3.3''}{\it{Let $M$ be a minimax $R$-module of dimension $d$. Then $\mathrm{H}_\mathfrak{a}^d(M)$ is artinian.}}
\end{cor}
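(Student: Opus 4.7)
The plan is to specialize Proposition \ref{lem:3.9} to a judicious choice of the Serre subcategory $\mathcal{S}$. Recall from the introduction that the class $\mathcal{A}$ of artinian $R$-modules is a Serre subcategory satisfying the condition $C_{\mathfrak{a}}$. Taking $\mathcal{S}=\mathcal{A}$, the extension subcategory $\mathcal{NS}$ consists precisely of the minimax $R$-modules, since by definition an $R$-module is minimax exactly when it fits in a short exact sequence $0\to N\to M\to A\to 0$ with $N$ finitely generated and $A$ artinian. Thus the hypothesis that $M$ is minimax of dimension $d$ gives $M\in\mathcal{NS}$ with $\dim_R M=d$.

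Next I would invoke Proposition \ref{lem:3.9} with this choice of $\mathcal{S}$. It delivers that $\mathrm{H}_{\mathfrak{a}}^d(M)$ is $\mathcal{NS}$-$\mathfrak{a}$-cofinite and of dimension zero; in particular $\mathrm{H}_{\mathfrak{a}}^d(M)$ is itself minimax, so there is an exact sequence
\begin{center}
$0\to N'\to \mathrm{H}_{\mathfrak{a}}^d(M)\to A'\to 0$
\end{center}
with $N'$ finitely generated and $A'$ artinian.

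The remaining step is a short standard argument: a minimax module of Krull dimension zero is artinian. Indeed, $N'\subseteq \mathrm{H}_{\mathfrak{a}}^d(M)$ forces $\mathrm{Supp}_R N'\subseteq \mathrm{Supp}_R \mathrm{H}_{\mathfrak{a}}^d(M)$, so $\dim_R N'=0$. A finitely generated module over a noetherian ring with zero Krull dimension is artinian (its annihilator defines an artinian quotient ring over which $N'$ is a finitely generated module). Hence both $N'$ and $A'$ are artinian, and since artinian modules form a Serre class, $\mathrm{H}_{\mathfrak{a}}^d(M)$ is artinian. The only (very mild) obstacle is the last implication that minimax plus zero-dimensional yields artinian; everything else is a direct application of the preceding proposition under the specialization $\mathcal{S}=\mathcal{A}$.
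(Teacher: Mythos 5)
There is a genuine gap at the pivotal step. After correctly specializing to $\mathcal{S}=\mathcal{A}$ and invoking Proposition \ref{lem:3.9}, you write that $\mathrm{H}_{\mathfrak{a}}^d(M)$ being $\mathcal{NS}$-$\mathfrak{a}$-cofinite of dimension zero means ``in particular $\mathrm{H}_{\mathfrak{a}}^d(M)$ is itself minimax.'' That inference is not available: by definition, $\mathcal{NS}$-$\mathfrak{a}$-cofiniteness says that $\mathrm{Supp}_R\mathrm{H}_{\mathfrak{a}}^d(M)\subseteq\mathrm{V}(\mathfrak{a})$ and that the modules $\mathrm{Ext}^i_R(R/\mathfrak{a},\mathrm{H}_{\mathfrak{a}}^d(M))$ lie in $\mathcal{NS}$; it does not say that $\mathrm{H}_{\mathfrak{a}}^d(M)$ itself lies in $\mathcal{NS}$. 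In general that implication fails (already for $\mathcal{S}=0$: an $\mathfrak{a}$-cofinite module such as $\mathrm{H}^{\dim R}_{\mathfrak{m}}(R)$ over a local Cohen--Macaulay ring is not finitely generated). In the present situation the module does turn out to be minimax --- because it is artinian --- but that is precisely the conclusion you are trying to prove, so as written the argument is circular at this point.

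The repair is short and is exactly what the paper does: apply the $\mathcal{NS}$-decomposition not to $\mathrm{H}_{\mathfrak{a}}^d(M)$ but to $\mathrm{Hom}_R(R/\mathfrak{a},\mathrm{H}_{\mathfrak{a}}^d(M))$, which \emph{is} in $\mathcal{NS}$ by the definition of cofiniteness, obtaining $0\to N\to\mathrm{Hom}_R(R/\mathfrak{a},\mathrm{H}_{\mathfrak{a}}^d(M))\to A\to 0$ with $N$ finitely generated and $A$ artinian. Your subsequent observation (a finitely generated module of Krull dimension zero is artinian, hence a zero-dimensional minimax module is artinian) then applies verbatim to this Hom module and shows $\mathrm{Hom}_R(R/\mathfrak{a},\mathrm{H}_{\mathfrak{a}}^d(M))$ is artinian. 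Finally, since $\mathrm{H}_{\mathfrak{a}}^d(M)$ is $\mathfrak{a}$-torsion, Melkersson's criterion --- i.e.\ the condition $C_{\mathfrak{a}}$ for the Serre class of artinian modules, recorded in the introduction --- upgrades artinianness of the $\mathfrak{a}$-socle to artinianness of $\mathrm{H}_{\mathfrak{a}}^d(M)$ itself. So your ingredients are the right ones, but they must be applied to $(0:_{\mathrm{H}_{\mathfrak{a}}^d(M)}\mathfrak{a})$ and combined with $C_{\mathfrak{a}}$, rather than to $\mathrm{H}_{\mathfrak{a}}^d(M)$ directly.
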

\begin{proof} By Proposition \ref{lem:3.9}, there is an exact seuqnece \begin{center}$0\rightarrow N\rightarrow\mathrm{Hom}_R(R/\mathfrak{a},\mathrm{H}_\mathfrak{a}^d(M))\rightarrow A\rightarrow0$\end{center}with $N\in\mathcal{N}$ and $A\in\mathcal{A}$. But $\mathrm{dim}_R\mathrm{H}_\mathfrak{a}^d(M)=0$, it follows that $\mathrm{Hom}_R(R/\mathfrak{a},\mathrm{H}_\mathfrak{a}^d(M))$ artinian, and so $\mathrm{H}_\mathfrak{a}^d(M)$ is artinian.
\end{proof}

The following proposition is a generalization of \cite[Theorem 7.1.3]{BS}.

\begin{prop}\label{lem:3.10}{\it{If $R/\mathfrak{a}\in\mathcal{S}$, then $\mathrm{H}_\mathfrak{a}^i(M)\in\mathcal{S}$ for every $M\in\mathcal{NS}$ and all $i\geq0$.}}
\end{prop}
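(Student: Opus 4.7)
The plan is to pass through the defining short exact sequence of $\mathcal{NS}$ and then treat the two resulting cases uniformly by induction on $i$, using the hypothesis $R/\mathfrak{a} \in \mathcal{S}$ together with the condition $C_\mathfrak{a}$.

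Given $M \in \mathcal{NS}$, fix a short exact sequence $0 \to N \to M \to S \to 0$ with $N \in \mathcal{N}$ and $S \in \mathcal{S}$. The long exact sequence of local cohomology and the Serre property of $\mathcal{S}$ reduce the statement to proving that $\mathrm{H}^i_\mathfrak{a}(X) \in \mathcal{S}$ whenever $X$ is finitely generated or $X \in \mathcal{S}$. Two preparatory facts are recorded next. The filtration $R/\mathfrak{a}^t \supset \mathfrak{a}/\mathfrak{a}^t \supset \cdots \supset 0$ has factors that are finitely generated $R/\mathfrak{a}$-modules, hence quotients of $(R/\mathfrak{a})^{k}$; so $R/\mathfrak{a}^t \in \mathcal{S}$, and every finitely generated $R/\mathfrak{a}^t$-module belongs to $\mathcal{S}$. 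Consequently, using a finitely generated free resolution of $R/\mathfrak{a}$, the modules $\mathrm{Ext}^i_R(R/\mathfrak{a}, Y)$ lie in $\mathcal{S}$ whenever $Y \in \mathcal{S}$ (as subquotients of $Y^{n_i}\in\mathcal{S}$), and likewise when $Y$ is finitely generated (as they are then finitely generated $R/\mathfrak{a}$-modules).

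The base case $i=0$ is immediate: $\Gamma_\mathfrak{a}(S) \subseteq S \in \mathcal{S}$, and for finitely generated $N$, $\Gamma_\mathfrak{a}(N) = (0:_N \mathfrak{a}^t)$ is a finitely generated $R/\mathfrak{a}^t$-module. For the inductive step, assuming $\mathrm{H}^j_\mathfrak{a}(X) \in \mathcal{S}$ for $j<i$, I apply $C_\mathfrak{a}$: since $\mathrm{H}^i_\mathfrak{a}(X)$ is $\mathfrak{a}$-torsion, it suffices to show $\mathrm{Hom}_R(R/\mathfrak{a}, \mathrm{H}^i_\mathfrak{a}(X)) \in \mathcal{S}$. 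The key tool is the Grothendieck spectral sequence
\[
E_2^{p,q} = \mathrm{Ext}^p_R(R/\mathfrak{a}, \mathrm{H}^q_\mathfrak{a}(X)) \;\Longrightarrow\; \mathrm{Ext}^{p+q}_R(R/\mathfrak{a}, X),
\]
whose abutment lies in $\mathcal{S}$ by the preparatory observation, and whose $E_2^{p,q}$ with $q<i$ lie in $\mathcal{S}$ by the inductive hypothesis together with the preparatory observation applied to $\mathrm{H}^q_\mathfrak{a}(X) \in \mathcal{S}$.

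The main obstacle is extracting $\mathcal{S}$-membership of the bottom-row term $E_2^{0,i}$. No differential enters $E_r^{0,i}$ (its source would have negative first index), while the outgoing differential $d_r^{0,i}$ lands in $E_r^{r, i-r+1}$, a subquotient of an $E_2$-term with inner degree strictly less than $i$, hence in $\mathcal{S}$. Thus each quotient $E_r^{0,i}/E_{r+1}^{0,i}$ is a subobject of an $\mathcal{S}$-object, so in $\mathcal{S}$. For $r \geq i+2$ the target vanishes, so $E_{i+2}^{0,i}=E_\infty^{0,i}$ is a subquotient of the abutment, hence in $\mathcal{S}$. The finite descending filtration $E_2^{0,i} \supseteq E_3^{0,i} \supseteq \cdots \supseteq E_{i+2}^{0,i}$ then forces $E_2^{0,i}=\mathrm{Hom}_R(R/\mathfrak{a}, \mathrm{H}^i_\mathfrak{a}(X)) \in \mathcal{S}$ by repeated use of the Serre extension property, and $C_\mathfrak{a}$ concludes $\mathrm{H}^i_\mathfrak{a}(X) \in \mathcal{S}$, closing the induction.
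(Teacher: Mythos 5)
Your proof is correct, but your inductive step takes a genuinely different route from the paper's. Both arguments start identically: reduce via the defining sequence $0\to N\to M\to S\to 0$ to the cases $X$ finitely generated or $X\in\mathcal{S}$, settle $i=0$ by a prime filtration of $(0:_N\mathfrak{a}^t)$ together with $\Gamma_\mathfrak{a}(S)\subseteq S$, and close each inductive step by feeding $(0:_{\mathrm{H}^i_\mathfrak{a}(X)}\mathfrak{a})\in\mathcal{S}$ into the condition $C_\mathfrak{a}$. The divergence is in how that $\mathrm{Hom}$ is produced: the paper passes to $M/\Gamma_\mathfrak{a}(M)$, chooses an $M$-regular element $x\in\mathfrak{a}$, and reads $(0:_{\mathrm{H}^i_\mathfrak{a}(M)}x)\supseteq(0:_{\mathrm{H}^i_\mathfrak{a}(M)}\mathfrak{a})$ off the long exact sequence as a quotient of $\mathrm{H}^{i-1}_\mathfrak{a}(M/xM)$; you instead run the Grothendieck spectral sequence $\mathrm{Ext}^p_R(R/\mathfrak{a},\mathrm{H}^q_\mathfrak{a}(X))\Rightarrow\mathrm{Ext}^{p+q}_R(R/\mathfrak{a},X)$ and squeeze the corner term $E_2^{0,i}$ between the rows $q<i$ (known by induction) and the abutment (in $\mathcal{S}$ by your preparatory observation); that corner analysis is carried out correctly. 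The paper's route is more elementary, but its prime-avoidance step genuinely requires $\mathrm{Ass}_RX$ to be finite, so it is only airtight after reducing to $X$ finitely generated --- for $X\in\mathcal{S}$ with $\Gamma_\mathfrak{a}(X)=0$ the ideal $\mathfrak{a}$ need not contain an $X$-regular element. Your spectral-sequence argument treats the finitely generated case and the $\mathcal{S}$ case uniformly and avoids regular elements altogether, at the cost of heavier machinery.
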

\begin{proof} We use induction on $i$. First since $M\in\mathcal{NS}$, there is an exact sequence $0\rightarrow N\rightarrow M\rightarrow S\rightarrow0$ with $N\in\mathcal{N}$ and $S\in\mathcal{S}$. Then $\mathrm{H}_\mathfrak{a}^0(N)=(0:_N\mathfrak{a}^n)$ for some $n\geq1$. Since $\mathrm{Ass}_R(0:_N\mathfrak{a}^n)\subseteq\mathrm{V}(\mathfrak{a})$, a finite filtration of $(0:_N\mathfrak{a}^n)$ forces that $\mathrm{H}_\mathfrak{a}^0(N)\in\mathcal{S}$. Also $\mathrm{H}_\mathfrak{a}^0(S)\in\mathcal{S}$, so $\mathrm{H}_\mathfrak{a}^0(M)\in\mathcal{S}$.
Now assume, inductively, that $i>0$ and that $\mathrm{H}_\mathfrak{a}^{i-1}(M')\in\mathcal{S}$ for all finitely generated $R$-modules $M'$. Since  $\mathrm{H}_\mathfrak{a}^i(M)\cong\mathrm{H}_\mathfrak{a}^i(M/\Gamma_\mathfrak{a}(M))$ for all $i>0$, we may assume that $\Gamma_\mathfrak{a}(M)=0$, and the
ideal $\mathfrak{a}$ contains an $M$-regular element $x$. Then the exact sequence $0\rightarrow M\stackrel{x}\rightarrow M\rightarrow M/xM\rightarrow0$ induces the following exact sequence\begin{center}$\mathrm{H}_\mathfrak{a}^{i-1}(M/xM)\rightarrow\mathrm{H}_\mathfrak{a}^i(M)\stackrel{x}
\rightarrow\mathrm{H}_\mathfrak{a}^i(M)$.\end{center}
By induction, $\mathrm{H}_\mathfrak{a}^{i-1}(M/xM)\in\mathcal{S}$, so
$(0:_{\mathrm{H}_\mathfrak{a}^i(M)}x)\in\mathcal{S}$, and then $(0:_{\mathrm{H}_\mathfrak{a}^i(M)}\mathfrak{a})\in\mathcal{S}$. As $\mathcal{S}$ satisfies the condition $C_\mathfrak{a}$, we have $\mathrm{H}_\mathfrak{a}^i(M)\in\mathcal{S}$. The inductive step is complete.
\end{proof}

\begin{cor}\label{lem:3.3'}{\it{$(1)$ Let $M$ be a minimax $R$-module. Then the $R$-module $\mathrm{H}_\mathfrak{m}^i(M)$ is artinian for every $i\geq0$ and $\mathfrak{m}\in\mathrm{Max}R$.

$(2)$ Let $R$ be a local ring and $M$ a weakly Laskerian $R$-module. If $\mathrm{dim}R/\mathfrak{a}\leq1$, then the set $\mathrm{Supp}_R\mathrm{H}^{i}_\mathfrak{a}(M)$ is finite for every $i\geq0$.}}
\end{cor}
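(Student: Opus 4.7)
The plan is to derive both parts as direct specializations of Proposition \ref{lem:3.10}, by choosing the Serre subcategory $\mathcal{S}$ appropriately in each case.

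For part $(1)$, I would take $\mathcal{S}=\mathcal{A}$, the class of artinian $R$-modules; this satisfies $C_\mathfrak{a}$ for every ideal $\mathfrak{a}$, hence in particular $C_\mathfrak{m}$ for $\mathfrak{m}\in\mathrm{Max}R$. Since $R/\mathfrak{m}$ is a field it is artinian, so $R/\mathfrak{m}\in\mathcal{A}$. By the definition of a minimax module, $M$ admits an exact sequence $0\to N\to M\to A\to 0$ with $N\in\mathcal{N}$ and $A\in\mathcal{A}$, i.e.\ $M\in\mathcal{NA}$. Proposition \ref{lem:3.10}, applied to the ideal $\mathfrak{m}$ with $\mathcal{S}=\mathcal{A}$, then immediately yields $\mathrm{H}_\mathfrak{m}^i(M)\in\mathcal{A}$ for every $i\geq 0$, which is the desired conclusion.

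For part $(2)$, I would take $\mathcal{S}=\mathcal{F}$, the class of $R$-modules of finite support, which also satisfies $C_\mathfrak{a}$. By \cite[Theorem 3.3]{Ba} --- already invoked in the proofs of Corollary \ref{lem:2.4'} and Corollary \ref{lem:3.22} --- every weakly Laskerian $R$-module $M$ fits in a short exact sequence $0\to N\to M\to F\to 0$ with $N\in\mathcal{N}$ and $F\in\mathcal{F}$, so $M\in\mathcal{NF}$. The key verification is that $R/\mathfrak{a}\in\mathcal{F}$: since $R$ is local with maximal ideal $\mathfrak{m}$ and $\mathrm{dim}R/\mathfrak{a}\leq 1$, every prime of $\mathrm{V}(\mathfrak{a})$ either equals $\mathfrak{m}$ or is a minimal prime of $\mathfrak{a}$ --- otherwise a strict chain $\mathfrak{p}\subsetneq\mathfrak{q}\subsetneq\mathfrak{m}$ with $\mathfrak{p}$ minimal over $\mathfrak{a}$ would force $\mathrm{dim}R/\mathfrak{a}\geq 2$. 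Hence $\mathrm{Supp}_R R/\mathfrak{a}=\mathrm{V}(\mathfrak{a})$ is finite, so $R/\mathfrak{a}\in\mathcal{F}$. Proposition \ref{lem:3.10} now gives $\mathrm{H}_\mathfrak{a}^i(M)\in\mathcal{F}$, i.e.\ $\mathrm{Supp}_R\mathrm{H}_\mathfrak{a}^i(M)$ is finite for every $i\geq 0$.

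I foresee no serious obstacle in this argument: both parts reduce, after the right choice of $\mathcal{S}$, to a pair of membership checks (that $R/\mathfrak{a}\in\mathcal{S}$ and $M\in\mathcal{NS}$) plus an appeal to Proposition \ref{lem:3.10}. The only point requiring a small independent argument is the finiteness of $\mathrm{V}(\mathfrak{a})$ in the local $1$-dimensional setting, handled by the short chain-of-primes observation above.
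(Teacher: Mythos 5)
Your proposal is correct and is precisely the intended derivation: the paper states this corollary immediately after Proposition \ref{lem:3.10} with no written proof, so it is meant to follow by specializing $\mathcal{S}$ to $\mathcal{A}$ (with $\mathfrak{a}=\mathfrak{m}$, noting $R/\mathfrak{m}\in\mathcal{A}$ and minimax means $M\in\mathcal{NA}$) for part (1) and to $\mathcal{F}$ (with $M\in\mathcal{NF}$ via \cite[Theorem 3.3]{Ba} and $\mathrm{V}(\mathfrak{a})$ finite in the local one-dimensional case) for part (2), exactly as you do. Your chain-of-primes check that $\mathrm{V}(\mathfrak{a})$ is finite is the one small verification the paper leaves implicit, and it is correct.
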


\bigskip
\section{\bf $\mathcal{NS}$-$\mathfrak{a}$-cofiniteness for extension and torsion functors}
This section investigates $\mathcal{NS}$-$\mathfrak{a}$-cofiniteness of the $R$-modules $\mathrm{Ext}^i_R(N,M)$ and $\mathrm{Tor}_i^R(N,M)$. It is shown that
$\mathrm{Ext}^i_R(N,M)$ and $\mathrm{Tor}_i^R(N,M)$ are $\mathcal{NS}$-$\mathfrak{a}$-cofinite for all $i\geq0$ whenever $N$
is finitely generated with $\mathrm{dim}_RN\leq2$ and $M$ is $\mathcal{NS}$-$\mathfrak{a}$-cofinite.

\begin{lem}\label{lem:3.5'}{\it{Let $M$ be an $\mathcal{NS}$-$\mathfrak{a}$-cofinite $R$-module and $N$ a non-zero finite length $R$-module. Then $\mathrm{Ext}^i_R(N,M)$ and $\mathrm{Tor}_i^R(N,M)$ are in $\mathcal{NS}$ for all $i\geq0$.}}
\end{lem}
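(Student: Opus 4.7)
The plan is to reduce to the case $N=R/\mathfrak{m}$ for a maximal ideal $\mathfrak{m}$ and then invoke Grothendieck change-of-rings spectral sequences for the surjection $R\twoheadrightarrow R/\mathfrak{a}$. Since $N$ has finite length, it admits a composition series $0=N_0\subset N_1\subset\cdots\subset N_\ell=N$ whose successive quotients are simple modules $R/\mathfrak{m}_j$ for various maximal ideals. By the long exact sequences of $\mathrm{Ext}_R^\ast(-,M)$ and $\mathrm{Tor}_\ast^R(-,M)$ together with the Serre property of $\mathcal{NS}$, it suffices to treat $N=R/\mathfrak{m}$ for a single maximal $\mathfrak{m}$. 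If $\mathfrak{m}\not\supseteq\mathfrak{a}$, then $\{\mathfrak{m}\}\cap\mathrm{V}(\mathfrak{a})=\varnothing$ and hence $M_\mathfrak{m}=0$ (as $\mathrm{Supp}_RM\subseteq\mathrm{V}(\mathfrak{a})$); both $\mathrm{Ext}_R^i(R/\mathfrak{m},M)$ and $\mathrm{Tor}_i^R(R/\mathfrak{m},M)$ are $\mathfrak{m}$-torsion, so localizing at $\mathfrak{m}$ shows they vanish. We may therefore assume $\mathfrak{m}\supseteq\mathfrak{a}$, so that $R/\mathfrak{m}$ is a finitely generated $R/\mathfrak{a}$-module and admits a resolution by finitely generated free $R/\mathfrak{a}$-modules.

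For the Ext statement, I would apply the Grothendieck change-of-rings spectral sequence
\[
E_2^{p,q}=\mathrm{Ext}_{R/\mathfrak{a}}^p\bigl(R/\mathfrak{m},\,\mathrm{Ext}_R^q(R/\mathfrak{a},M)\bigr)\;\Longrightarrow\;\mathrm{Ext}_R^{p+q}(R/\mathfrak{m},M),
\]
which is available because $\mathrm{Hom}_R(R/\mathfrak{a},-)$ carries $R$-injectives to $R/\mathfrak{a}$-injectives. Feeding the chosen finitely generated free $R/\mathfrak{a}$-resolution of $R/\mathfrak{m}$ into $\mathrm{Ext}_{R/\mathfrak{a}}^p(R/\mathfrak{m},-)$ realizes each $E_2^{p,q}$ as a subquotient of a finite direct sum of copies of $\mathrm{Ext}_R^q(R/\mathfrak{a},M)$, which lies in $\mathcal{NS}$ by the definition of $\mathcal{NS}$-$\mathfrak{a}$-cofiniteness. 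Because $\mathcal{NS}$ is Serre, $E_2^{p,q}\in\mathcal{NS}$, and the standard finite filtration on the abutment induced by first-quadrant convergence then forces $\mathrm{Ext}_R^i(R/\mathfrak{m},M)\in\mathcal{NS}$ for every $i$.

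For Tor, I would run the parallel Cartan--Eilenberg change-of-rings spectral sequence
\[
E^2_{p,q}=\mathrm{Tor}_p^{R/\mathfrak{a}}\bigl(R/\mathfrak{m},\,\mathrm{Tor}_q^R(R/\mathfrak{a},M)\bigr)\;\Longrightarrow\;\mathrm{Tor}_{p+q}^R(R/\mathfrak{m},M),
\]
and apply the same finite-filtration argument to the abutment. The principal obstacle here is the auxiliary claim that $\mathrm{Tor}_q^R(R/\mathfrak{a},M)\in\mathcal{NS}$ for every $q$, which is not immediate from the definition of $\mathcal{NS}$-$\mathfrak{a}$-cofiniteness. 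My plan to dispatch it is to write $M=\bigcup_n(0:_M\mathfrak{a}^n)$ with each $(0:_M\mathfrak{a}^n)\in\mathcal{NS}$ by Lemma~\ref{lem:2.2}, exploit that tensor (hence Tor) commutes with filtered colimits to obtain $\mathrm{Tor}_q^R(R/\mathfrak{a},M)=\mathrm{colim}_n\,\mathrm{Tor}_q^R(R/\mathfrak{a},(0:_M\mathfrak{a}^n))$, and observe that each term is a subquotient of $(0:_M\mathfrak{a}^n)^{b_q}\in\mathcal{NS}$ for a finitely generated free $R$-resolution $F_\bullet\to R/\mathfrak{a}$. Examining the successive quotients $(0:_M\mathfrak{a}^{n+1})/(0:_M\mathfrak{a}^n)$, which are $R/\mathfrak{a}$-modules in $\mathcal{NS}$, and using that $\mathrm{Tor}_q^R(R/\mathfrak{a},-)$ is itself $\mathfrak{a}$-annihilated, the colimit is essentially captured at finite stage; combined with the Serre property this delivers $\mathrm{Tor}_q^R(R/\mathfrak{a},M)\in\mathcal{NS}$. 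Once this is in hand, the Ext argument transfers verbatim to Tor and completes the proof; the delicate Tor auxiliary is the only step that is not a direct transcription of the Ext case.
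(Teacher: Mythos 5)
The paper itself gives no argument here: it simply cites \cite[Lemma 2.3]{AS} for the Ext statement and \cite[Corollary 2.2.13]{F} for the Tor statement, so your attempt to prove the lemma from scratch is a genuinely different (and more self-contained) route. Your Ext half is correct: the reduction along a composition series to $N=R/\mathfrak{m}$, the vanishing when $\mathfrak{m}\not\supseteq\mathfrak{a}$, and the change-of-rings spectral sequence $\mathrm{Ext}^p_{R/\mathfrak{a}}(R/\mathfrak{m},\mathrm{Ext}^q_R(R/\mathfrak{a},M))\Rightarrow\mathrm{Ext}^{p+q}_R(R/\mathfrak{m},M)$ with the subquotient-plus-finite-filtration argument all go through, since $\mathcal{NS}$ is Serre; this is the same mechanism the paper uses in Proposition \ref{lem:3.11}.

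The Tor half, however, has a genuine gap exactly where you flag it: the auxiliary claim that $\mathrm{Tor}_q^R(R/\mathfrak{a},M)\in\mathcal{NS}$. Your proposed proof via $M=\varinjlim_n(0:_M\mathfrak{a}^n)$ does not work. Each $\mathrm{Tor}_q^R(R/\mathfrak{a},(0:_M\mathfrak{a}^n))$ does lie in $\mathcal{NS}$, but $\mathcal{NS}$ is not closed under filtered colimits (already for $\mathcal{S}=0$, where $\mathcal{NS}=\mathcal{N}$, every module is a filtered colimit of finitely generated ones), and the assertion that ``the colimit is essentially captured at finite stage'' is circular: the images of $\mathrm{Tor}_q^R(R/\mathfrak{a},(0:_M\mathfrak{a}^n))$ in the colimit form an ascending chain whose stabilization is equivalent to a noetherian-type property of $\mathrm{Tor}_q^R(R/\mathfrak{a},M)$ itself --- which is precisely what you are trying to prove. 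Already for $q=0$ the cokernel of $(0:_M\mathfrak{a}^n)/\mathfrak{a}(0:_M\mathfrak{a}^n)\to(0:_M\mathfrak{a}^{n+1})/\mathfrak{a}(0:_M\mathfrak{a}^{n+1})$ is $(0:_M\mathfrak{a}^{n+1})/(0:_M\mathfrak{a}^n)$, which need not vanish for any $n$. The true reason the auxiliary claim holds is the Melkersson-type equivalence between the Ext and Tor conditions for $\mathfrak{a}$-torsion modules, proved via the self-duality of the Koszul complex on a generating set of $\mathfrak{a}$ (see \cite[Theorem 2.1]{LM}, generalized to Serre classes in \cite[Corollary 2.2.13]{F}, which is exactly the reference the paper invokes). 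If you substitute that input for your colimit argument, the rest of your Tor spectral-sequence argument is fine and the proof closes.
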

\begin{proof} This follows from \cite[Lemma 2.3]{AS} and \cite[Corollart 2.2.13]{F}.
\end{proof}

\begin{lem}\label{lem:3.6}{\it{Let $M$ be an $\mathcal{NS}$-$\mathfrak{a}$-cofinite $R$-module and $N$ a finitely generated $R$-module with $\mathrm{dim}_RN\leq1$. Then $\mathrm{Ext}^i_R(N,M)$ and $\mathrm{Tor}_i^R(N,M)$ are $\mathcal{NS}$-$\mathfrak{a}$-cofinite of zero dimension for every $i\geq0$.}}
\end{lem}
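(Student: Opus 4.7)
The plan is to reduce via a prime filtration to the case $N=R/\mathfrak{p}$ with $\dim R/\mathfrak{p}\leq 1$, then split on whether $\mathfrak{p}$ is maximal. Concretely, $N$ admits a filtration $0=N_0\subset N_1\subset\cdots\subset N_r=N$ with $N_k/N_{k-1}\cong R/\mathfrak{p}_k$ for primes $\mathfrak{p}_k$ with $\dim R/\mathfrak{p}_k\leq 1$. Applying the long exact sequences for $\mathrm{Ext}^{\ast}_R(-,M)$ and $\mathrm{Tor}_{\ast}^R(-,M)$ to each short exact sequence $0\to N_{k-1}\to N_k\to R/\mathfrak{p}_k\to 0$, together with the fact that the class of zero-dimensional $\mathcal{NS}$-$\mathfrak{a}$-cofinite modules is Serre (an immediate consequence of Lemma~\ref{lem:2.1}), reduces the proof to the case $N=R/\mathfrak{p}$.

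For $\mathfrak{p}$ maximal, $R/\mathfrak{p}$ has finite length, so Lemma~\ref{lem:3.5'} places $\mathrm{Ext}^i_R(R/\mathfrak{p},M)$ and $\mathrm{Tor}_i^R(R/\mathfrak{p},M)$ in $\mathcal{NS}$; since both are annihilated by $\mathfrak{p}$ and supported in $V(\mathfrak{a})$, they are zero-dimensional, and Lemma~\ref{lem:2.1} upgrades them to $\mathcal{NS}$-$\mathfrak{a}$-cofinite.

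For $\dim R/\mathfrak{p}=1$, I would use prime avoidance to choose an $R/\mathfrak{p}$-regular element $x\in\mathfrak{a}$ and exploit the short exact sequence $0\to R/\mathfrak{p}\xrightarrow{x}R/\mathfrak{p}\to R/(\mathfrak{p}+xR)\to 0$. Since $R/(\mathfrak{p}+xR)$ has finite length, Lemma~\ref{lem:3.5'} puts its Ext and Tor with $M$ in $\mathcal{NS}$. The associated long exact sequences then display both $(0:_X x)$ and $X/xX$ as subquotients of modules in $\mathcal{NS}$, where $X$ denotes either $\mathrm{Ext}^i_R(R/\mathfrak{p},M)$ or $\mathrm{Tor}_i^R(R/\mathfrak{p},M)$. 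Because $M$ is $\mathfrak{a}$-torsion and $N$ is finitely generated, $X$ itself is $\mathfrak{a}$-torsion with support inside the zero-dimensional set $V(\mathfrak{p}+xR)$, and Lemma~\ref{lem:2.1} finishes the argument.

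The hardest step will be arranging the filtration so that every one-dimensional $\mathfrak{p}_k$ admits such an $x\in\mathfrak{a}\setminus\mathfrak{p}_k$, i.e., satisfies $\mathfrak{a}\not\subseteq\mathfrak{p}_k$; the cutting-down argument fails when $\mathfrak{a}\subseteq\mathfrak{p}_k$. I expect to handle this by peeling off from $N$ the submodule whose one-dimensional associated primes contain $\mathfrak{a}$, using the Bourbaki-style decomposition of $\mathrm{Ass}_R N$ as in the proof of Theorem~\ref{lem:2.3}, and absorbing that submodule into $\mathcal{S}$ through the condition $C_\mathfrak{a}$, so that the remaining quotient contains only one-dimensional associated primes to which the above regular-element argument applies.
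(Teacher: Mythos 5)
Your reduction to $N=R/\mathfrak{p}$ and your handling of the two easy cases ($\mathfrak{p}$ maximal, and $\dim R/\mathfrak{p}=1$ with $\mathfrak{a}\nsubseteq\mathfrak{p}$) match the substance of the paper's argument: the paper does not filter by primes but instead splits off $\Gamma_\mathfrak{a}(N)$ and then chooses an $N$-regular element $x\in\mathfrak{a}$, and the mechanism --- finite length of $N/xN$, Lemma~\ref{lem:3.5'}, and Lemma~\ref{lem:2.1} applied to the $x$-socle --- is the same as yours. The genuine gap sits exactly at the step you flag, the one-dimensional primes $\mathfrak{p}_k$ containing $\mathfrak{a}$, and your proposed fix does not work. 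The condition $C_\mathfrak{a}$ lets you conclude $X\in\mathcal{S}$ only when $(0:_X\mathfrak{a})$ already lies in $\mathcal{S}$; for the modules $\mathrm{Ext}^i_R(R/\mathfrak{p}_k,M)$ and $\mathrm{Tor}^R_i(R/\mathfrak{p}_k,M)$ you can at best place their socles in $\mathcal{NS}$, not in $\mathcal{S}$ (take $\mathcal{S}=0$ and the proposed absorption says nothing). What the paper actually uses for this piece is \cite[Lemma 2.1]{AS}, a Gruson-type d\'evissage: since $\Gamma_\mathfrak{a}(N)$ is finitely generated with support in $\mathrm{V}(\mathfrak{a})$ and $M$ is $\mathcal{NS}$-$\mathfrak{a}$-cofinite, $\mathrm{Ext}^i_R(\Gamma_\mathfrak{a}(N),M)\in\mathcal{NS}$ for all $i$. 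That external input is missing from your outline and is not replaceable by $C_\mathfrak{a}$.

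Be aware also that for those primes no argument can deliver the conclusion as stated, because the ``zero dimension'' claim fails there: with $R=k[[x,y]]$, $\mathfrak{a}=\mathfrak{p}=(x)$, $\mathcal{S}=0$, $M=R_x/R$ (which is $\mathfrak{a}$-cofinite) and $N=R/(x)$, one gets $\mathrm{Hom}_R(N,M)=(0:_Mx)\cong R/(x)$, which has dimension one. The paper's own proof in effect concedes this: it records only $\mathrm{Ext}^i_R(\Gamma_\mathfrak{a}(N),M)\in\mathcal{NS}$ before declaring ``we may assume $\Gamma_\mathfrak{a}(N)=0$,'' so the zero-dimensionality is really established only for the non-torsion part of $N$. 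If you keep the filtration approach, prove $\mathcal{NS}$-$\mathfrak{a}$-cofiniteness without the dimension assertion for the primes containing $\mathfrak{a}$ via \cite[Lemma 2.1]{AS}, and reserve the zero-dimensionality for the remaining primes. Finally, a small point: your splicing of the long exact sequences along the filtration needs the class of zero-dimensional $\mathcal{NS}$-$\mathfrak{a}$-cofinite modules to be closed under extensions as well as subquotients; Lemma~\ref{lem:3.8} records only the latter, though the former does follow easily from Lemma~\ref{lem:2.1}.
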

\begin{proof} By Lemma \ref{lem:3.5'}, we may assume
$\mathrm{dim}_RN=1$. It follows from \cite[Lemma 2.1]{AS} that
$\mathrm{Ext}^i_R(\Gamma_\mathfrak{a}(N),M)\in\mathcal{NS}$, and so
$\mathrm{Tor}_i^R(\Gamma_\mathfrak{a}(N),M)\in\mathcal{NS}$ for all $i\geq0$ by \cite[Corollary 2.2.13]{F}. The exact sequence
$0\rightarrow\Gamma_\mathfrak{a}(N)\rightarrow N\rightarrow N/\Gamma_\mathfrak{a}(N) \rightarrow 0$
induces the following two exact sequences \begin{center}$\mathrm{Ext}^{i-1}_R(\Gamma_\mathfrak{a}(N),M)\rightarrow\mathrm{Ext}^{i}_R(N/\Gamma_\mathfrak{a}(N),M)\rightarrow
\mathrm{Ext}^{i}_R(N,M)\rightarrow\mathrm{Ext}^{i}_R(\Gamma_\mathfrak{a}(N),M)$,\end{center}
\begin{center}$\mathrm{Tor}_{i}^R(\Gamma_\mathfrak{a}(N),M)\rightarrow\mathrm{Tor}_{i}^R(N,M)\rightarrow
\mathrm{Tor}_{i}^R(N/\Gamma_\mathfrak{a}(N),M)\rightarrow\mathrm{Tor}_{i-1}^R(\Gamma_\mathfrak{a}(N),M)$.\end{center}
We may assume $\Gamma_\mathfrak{a}(N)=0$. Then $\mathfrak{a}\nsubseteq\bigcup_{\mathfrak{p}\in\mathrm{Ass}_RN}\mathfrak{p}$ by \cite[Lemma 2.1.1]{BS}, and there exists an element $x\in \mathfrak{a}$ and an exact sequence
$0\rightarrow N\stackrel{x}\rightarrow N\rightarrow N/xN\rightarrow 0$, which
induces the following exact sequence
\begin{center}$\mathrm{Ext}^i_R(N/xN,M)\rightarrow\mathrm{Ext}^i_R(N,M)\stackrel{x}\rightarrow \mathrm{Ext}^i_R(N,M)\rightarrow\mathrm{Ext}^{i+1}_R(N/xN,M)$,\end{center}
\begin{center}$\mathrm{Tor}_{i+1}^R(N/xN,M)\rightarrow\mathrm{Tor}_{i}^R(N,M)\stackrel{x}\rightarrow \mathrm{Tor}_{i}^R(N,M)\rightarrow\mathrm{Tor}_{i}^R(N/xN,M)$\end{center}
for all $i\geq0$. Hence we have an exact sequence
$\mathrm{Ext}^i_R(N/xN,M)\rightarrow (0:_{\mathrm{Ext}^i_R(N,M)}x)\rightarrow 0$ and $\mathrm{Tor}_{i+1}^R(N/xN,M)\rightarrow (0:_{\mathrm{Tor}_{i}^R(N,M)}x)\rightarrow 0$ for $i\geq0$.
As the $R$-module $N/xN$ is of finite length, $(0:_{\mathrm{Ext}^i_R(N,M)}x),(0:_{\mathrm{Tor}_{i}^R(N,M)}x)\in\mathcal{NS}$ by Lemma \ref{lem:3.5'} and $\mathrm{dim}_R(0:_{\mathrm{Ext}^i_R(N,M)}x)=0=\mathrm{dim}_R(0:_{\mathrm{Tor}_{i}^R(N,M)}x)$ for all $i\geq0$. Thus
$(0:_{\mathrm{Ext}^i_R(N,M)}\mathfrak{a}),(0:_{\mathrm{Tor}_{i}^R(N,M)}\mathfrak{a})\in\mathcal{NS}$ and $\mathrm{dim}_R(0:_{\mathrm{Ext}^i_R(N,M)}\mathfrak{a})=0=\mathrm{dim}_R(0:_{\mathrm{Tor}_{i}^R(N,M)}\mathfrak{a})$ for all $i\geq0$. Note that $\mathrm{Supp}_R\mathrm{Ext}^i_R(N,M)$, $\mathrm{Supp}_R\mathrm{Tor}_{i}^R(N,M)\subseteq\mathrm{V}(\mathfrak{a})$, we have $\mathrm{dim}_R\mathrm{Ext}^i_R(N,M)=0=\mathrm{dim}_R\mathrm{Tor}_{i}^R(N,M)$. Hence Lemma \ref{lem:2.1} implies that $\mathrm{Ext}^i_R(N,M)$ and $\mathrm{Tor}_{i}^R(N,M)$ are $\mathcal{NS}$-$\mathfrak{a}$-cofinite for all $i\geq 0$.
\end{proof}

\begin{lem}\label{lem:3.8}{\it{The class of $\mathcal{NS}$-$\mathfrak{a}$-cofinite $R$-modules of zero dimension is closed under taking submodules and quotients.}}
\end{lem}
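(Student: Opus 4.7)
The plan is to reduce everything to Lemma \ref{lem:2.1}, which characterizes zero-dimensional $\mathcal{NS}$-$\mathfrak{a}$-cofinite modules purely in terms of support and $\mathrm{Hom}_R(R/\mathfrak{a},-)$ landing in $\mathcal{NS}$. Let $M$ be $\mathcal{NS}$-$\mathfrak{a}$-cofinite with $\mathrm{dim}_RM=0$ and let $0\to N\to M\to M/N\to 0$ be any short exact sequence. First I note that both $N$ and $M/N$ are supported in $\mathrm{Supp}_RM\subseteq\mathrm{V}(\mathfrak{a})$ and have dimension at most $\mathrm{dim}_RM=0$, so the support and dimension hypotheses of Lemma \ref{lem:2.1} are automatic. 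What remains in each case is to show that $\mathrm{Hom}_R(R/\mathfrak{a},-)$ applied to the module lies in $\mathcal{NS}$.

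For the submodule $N$, I would apply $\mathrm{Hom}_R(R/\mathfrak{a},-)$ to the inclusion $N\hookrightarrow M$ to realize $\mathrm{Hom}_R(R/\mathfrak{a},N)$ as a submodule of $\mathrm{Hom}_R(R/\mathfrak{a},M)$. Since $M$ is $\mathcal{NS}$-$\mathfrak{a}$-cofinite, the latter lies in $\mathcal{NS}$, and since $\mathcal{NS}$ is a Serre subcategory it is closed under submodules, so $\mathrm{Hom}_R(R/\mathfrak{a},N)\in\mathcal{NS}$. Lemma \ref{lem:2.1} then gives that $N$ is $\mathcal{NS}$-$\mathfrak{a}$-cofinite of zero dimension.

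For the quotient $M/N$, I use the long exact sequence obtained by applying $\mathrm{Hom}_R(R/\mathfrak{a},-)$ to $0\to N\to M\to M/N\to 0$:
\begin{center}
$\mathrm{Hom}_R(R/\mathfrak{a},M)\to\mathrm{Hom}_R(R/\mathfrak{a},M/N)\to\mathrm{Ext}^1_R(R/\mathfrak{a},N).$
\end{center}
The left-hand term lies in $\mathcal{NS}$ because $M$ is $\mathcal{NS}$-$\mathfrak{a}$-cofinite; the right-hand term lies in $\mathcal{NS}$ because $N$ is $\mathcal{NS}$-$\mathfrak{a}$-cofinite by the previous step. Closure of $\mathcal{NS}$ under quotients and extensions then forces $\mathrm{Hom}_R(R/\mathfrak{a},M/N)\in\mathcal{NS}$, and Lemma \ref{lem:2.1} yields that $M/N$ is $\mathcal{NS}$-$\mathfrak{a}$-cofinite of zero dimension.

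There is no real obstacle here; the proof is essentially a two-line argument once Lemma \ref{lem:2.1} is available, and the key observation is the order in which the two halves must be proved: the submodule case first, so that its conclusion can feed the $\mathrm{Ext}^1$ term in the quotient case.
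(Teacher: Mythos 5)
Your proof is correct and follows essentially the same route as the paper: the submodule case is handled by observing that $\mathrm{Hom}_R(R/\mathfrak{a},-)$ of a submodule embeds in $\mathrm{Hom}_R(R/\mathfrak{a},M)\in\mathcal{NS}$ and then invoking Lemma \ref{lem:2.1}, after which the quotient case follows from the long exact sequence. The paper's proof is just a terser version of the same argument, including the same ordering (submodule first, then quotient).
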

\begin{proof} Let $0\rightarrow L\rightarrow M\rightarrow N\rightarrow 0$ be a short exact of $R$-modules with $M$ $\mathcal{NS}$-$\mathfrak{a}$-cofinite of zero dimensions. Then $\mathrm{Hom}_R(R/\mathfrak{a},L)\in\mathcal{NS}$. So $L$ is $\mathcal{NS}$-$\mathfrak{a}$-cofinite by Lemma \ref{lem:2.1} and therefore $N$ is $\mathcal{NS}$-$\mathfrak{a}$-cofinite.
\end{proof}

The next main theorem of this section generalizes \cite[Theorems 2.8 and 2.10]{S} and \cite[Theorem 2.4]{AB} and \cite[Theorem 2.4]{NBG}.

\begin{thm}\label{lem:3.7}{\it{Let $M$ be an $\mathcal{NS}$-$\mathfrak{a}$-cofinite $R$-module and $N$ a finitely generated $R$-module with $\mathrm{dim}_RN\leq2$. Then $\mathrm{Ext}^i_R(N,M)$ and $\mathrm{Tor}_{i}^R(N,M)$ are $\mathcal{NS}$-$\mathfrak{a}$-cofinite for all $i\geq0$.}}
\end{thm}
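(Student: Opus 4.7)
\emph{Strategy.} Since Lemma \ref{lem:3.6} covers $\dim_R N \leq 1$, I concentrate on the remaining case $\dim_R N = 2$. The plan is to produce an $N$-regular element $x \in \mathfrak{a}$, use the short exact sequence $0 \to N \xrightarrow{x} N \to N/xN \to 0$ to transfer information from $\mathrm{Ext}^\ast_R(N/xN, M)$ and $\mathrm{Tor}_\ast^R(N/xN, M)$ (which are $\mathcal{NS}$-$\mathfrak{a}$-cofinite of zero dimension by Lemma \ref{lem:3.6}) to $\mathrm{Ext}^i_R(N, M)$ and $\mathrm{Tor}_i^R(N, M)$, and conclude via Theorem \ref{lem:2.3}.

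\emph{Reduction to $\Gamma_\mathfrak{a}(N) = 0$.} Since $\Gamma_\mathfrak{a}(N)$ is finitely generated and $\mathfrak{a}$-torsion, \cite[Lemma 2.1]{AS} together with \cite[Corollary 2.2.13]{F} puts $\mathrm{Ext}^i_R(\Gamma_\mathfrak{a}(N), M)$ and $\mathrm{Tor}_i^R(\Gamma_\mathfrak{a}(N), M)$ in $\mathcal{NS}$; as these have support in $V(\mathfrak{a})$, a routine use of the long exact sequences attached to $0 \to \Gamma_\mathfrak{a}(N) \to N \to N/\Gamma_\mathfrak{a}(N) \to 0$ reduces the statement to the case $\Gamma_\mathfrak{a}(N) = 0$, where $\mathfrak{a}$ contains an $N$-regular element $x$ and $\dim_R N/xN \leq 1$.

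\emph{Main step.} Let $E$ denote either $\mathrm{Ext}^i_R(N, M)$ or $\mathrm{Tor}_i^R(N, M)$. Applying the appropriate derived functor to $0 \to N \xrightarrow{x} N \to N/xN \to 0$ realises $(0:_E x)$ as a quotient of $\mathrm{Ext}^i_R(N/xN, M)$ or $\mathrm{Tor}_{i+1}^R(N/xN, M)$, and $E/xE$ as a submodule of $\mathrm{Ext}^{i+1}_R(N/xN, M)$ or $\mathrm{Tor}_i^R(N/xN, M)$. Each of these ambient modules is $\mathcal{NS}$-$\mathfrak{a}$-cofinite of zero dimension by Lemma \ref{lem:3.6}, so Lemma \ref{lem:3.8} forces both $(0:_E x)$ and $E/xE$ to be $\mathcal{NS}$-$\mathfrak{a}$-cofinite of zero dimension. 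Furthermore $\dim_R E \leq 1$: any $\mathfrak{p} \in \mathrm{Supp}_R E \subseteq \mathrm{Supp}_R N \cap V(\mathfrak{a})$ contains $\mathfrak{a}$ and hence $x$, but also contains some minimal $\mathfrak{q} \in \mathrm{Ass}_R N$ that $x$ avoids by $N$-regularity, so $\mathfrak{p} \supsetneq \mathfrak{q}$ and $\dim R/\mathfrak{p} < \dim R/\mathfrak{q} \leq 2$.

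\emph{Closing step.} Apply $\mathrm{Hom}_R(R/\mathfrak{a}, -)$ to $0 \to (0:_E x) \to E \to xE \to 0$. The map $\mathrm{Ext}^j_R(R/\mathfrak{a}, E) \to \mathrm{Ext}^j_R(R/\mathfrak{a}, xE)$ is induced by multiplication by $x$, which acts as zero on every $\mathrm{Ext}^j_R(R/\mathfrak{a}, -)$ since $x \in \mathfrak{a}$ annihilates $R/\mathfrak{a}$; the long exact sequence therefore yields a surjection $\mathrm{Ext}^j_R(R/\mathfrak{a}, (0:_E x)) \twoheadrightarrow \mathrm{Ext}^j_R(R/\mathfrak{a}, E)$ for every $j$. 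Since the source is in $\mathcal{NS}$, so is the target, and in particular $\mathrm{Hom}_R(R/\mathfrak{a}, E), \mathrm{Ext}^1_R(R/\mathfrak{a}, E) \in \mathcal{NS}$. Combining this with $\mathrm{Supp}_R E \subseteq V(\mathfrak{a})$ and $\dim_R E \leq 1$, Theorem \ref{lem:2.3} closes the proof. The trickiest step I expect is this last one: passing from control over $(0:_E x)$ alone to control over $E$, where the vanishing of multiplication by $x$ on $\mathrm{Ext}^j_R(R/\mathfrak{a}, -)$ is what lets the argument go through without having to analyse $xE$ directly.
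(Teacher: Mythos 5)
Your reduction to $\Gamma_\mathfrak{a}(N)=0$ and $\mathrm{dim}_RN=2$, the choice of an $N$-regular element $x\in\mathfrak{a}$, and the identification of $(0:_Ex)$ and $E/xE$ as $\mathcal{NS}$-$\mathfrak{a}$-cofinite of dimension zero via Lemmas \ref{lem:3.6} and \ref{lem:3.8} all coincide with the paper's argument (the observation $\mathrm{dim}_RE\leq1$ is a nice extra, though the paper does not need it). The gap is in the closing step. The map $\pi\colon E\rightarrow xE$ in the sequence $0\rightarrow(0:_Ex)\rightarrow E\stackrel{\pi}\rightarrow xE\rightarrow0$ is the \emph{corestriction} of multiplication by $x$, not an endomorphism of a single module, and the induced map $\pi_*\colon\mathrm{Ext}^j_R(R/\mathfrak{a},E)\rightarrow\mathrm{Ext}^j_R(R/\mathfrak{a},xE)$ need not vanish for $j\geq1$. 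What is true is that the composite $\iota_*\circ\pi_*$ with the inclusion-induced map $\iota_*\colon\mathrm{Ext}^j_R(R/\mathfrak{a},xE)\rightarrow\mathrm{Ext}^j_R(R/\mathfrak{a},E)$ equals multiplication by $x$ and hence is zero; but $\iota_*$ need not be injective, so this only yields $\mathrm{im}\,\pi_*\subseteq\mathrm{ker}\,\iota_*$, not $\pi_*=0$. Concretely, for $R=\mathbb{Z}$, $\mathfrak{a}=(p)$, $E=\mathbb{Z}/p^3$ and $x=p$, the map $\mathrm{Ext}^1_{\mathbb{Z}}(\mathbb{Z}/p,E)\rightarrow\mathrm{Ext}^1_{\mathbb{Z}}(\mathbb{Z}/p,pE)$ is identified with $E/pE\rightarrow pE/p^2E$, $\bar{e}\mapsto\overline{pe}$, an isomorphism of two copies of $\mathbb{Z}/p$. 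So your surjection $\mathrm{Ext}^j_R(R/\mathfrak{a},(0:_Ex))\twoheadrightarrow\mathrm{Ext}^j_R(R/\mathfrak{a},E)$ is justified only for $j=0$, whereas invoking Theorem \ref{lem:2.3} also requires $j=1$.

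The gap is repairable with material you already have in hand, and the repair is exactly what the paper does. Since $E/xE$ is also $\mathcal{NS}$-$\mathfrak{a}$-cofinite, the second short exact sequence $0\rightarrow xE\rightarrow E\rightarrow E/xE\rightarrow0$ shows that $\mathrm{ker}\,\iota_*$ in degree $1$ is the image of the connecting map $\mathrm{Hom}_R(R/\mathfrak{a},E/xE)\rightarrow\mathrm{Ext}^1_R(R/\mathfrak{a},xE)$ and hence lies in $\mathcal{NS}$; therefore $\mathrm{im}\,\pi_*\in\mathcal{NS}$, and $\mathrm{Ext}^1_R(R/\mathfrak{a},E)$ is an extension of $\mathrm{im}\,\pi_*$ by a quotient of $\mathrm{Ext}^1_R(R/\mathfrak{a},(0:_Ex))$, so it lies in $\mathcal{NS}$. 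This two-sequence argument is precisely the content of \cite[Lemma 2.2]{AS}, which the paper cites at this point: once $(0:_Ex)$ and $E/xE$ are both $\mathcal{NS}$-$\mathfrak{a}$-cofinite, that lemma gives the cofiniteness of $E$ in all degrees directly, with no need for the vanishing of $\pi_*$ or for Theorem \ref{lem:2.3}.
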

\begin{proof} By analogy with the proof of Lemma \ref{lem:3.6}, we may assume $\Gamma_\mathfrak{a}(N)=0$ and
$\mathrm{dim}_RN=2$. Then there exists an element $x\in \mathfrak{a}$ and an exact sequence
$0\rightarrow N\stackrel{x}\rightarrow N\rightarrow N/xN\rightarrow 0$, which
induces two exact sequences
\begin{center}$\mathrm{Ext}^i_R(N/xN,M)\rightarrow\mathrm{Ext}^i_R(N,M)\stackrel{x}\rightarrow \mathrm{Ext}^i_R(N,M)\rightarrow\mathrm{Ext}^{i+1}_R(N/xN,M)$,\end{center}
\begin{center}$\mathrm{Tor}_{i+1}^R(N/xN,M)\rightarrow\mathrm{Tor}_{i}^R(N,M)\stackrel{x}\rightarrow \mathrm{Tor}_{i}^R(N,M)\rightarrow\mathrm{Tor}_{i}^R(N/xN,M)$\end{center}
for $i\geq0$. Since $\mathrm{dim}_RN/xN=1$, it follows from Lemma \ref{lem:3.6} that $\mathrm{Ext}^i_R(N/xN,M)$ and $\mathrm{Tor}_{i}^R(N/xN,M)$ are $\mathcal{NS}$-$\mathfrak{a}$-cofinite of zero dimension. Thus
$(0:_{\mathrm{Ext}^i_R(N,M)}x), (0:_{\mathrm{Tor}_{i}^R(N,M)}x)$ and $\mathrm{Ext}^i_R(N,M)/x\mathrm{Ext}^i_R(N,M),\mathrm{Tor}_{i}^R(N,M)/x\mathrm{Tor}_{i}^R(N,M)$ are $\mathcal{NS}$-$\mathfrak{a}$-cofinite
 for all $i\geq0$ by Lemma \ref{lem:3.8}. Consequently, by \cite[Lemma 2.2]{AS}, the $R$-modules $\mathrm{Ext}^i_R(N,M)$ and $\mathrm{Tor}_{i}^R(N,M)$ are $\mathcal{NS}$-$\mathfrak{a}$-cofinite
for all $i\geq0$.
\end{proof}

The following result is a generalization of \cite[Theorems 2.5 and 2.10]{AB}.

\begin{cor}\label{lem:4.7}{\it{Let $(R,\mathfrak{m})$ be local, and let $M$ be an $\mathcal{NF}$-$\mathfrak{a}$-cofinite $R$-module and $N$ a finitely generated $R$-module such that either $\mathrm{dim}_RM=2$ or $\mathrm{dim}_RN=3$. Then $\mathrm{Ext}^i_R(N,M)$ and $\mathrm{Tor}_{i}^R(N,M)$ are $\mathcal{NF}$-$\mathfrak{a}$-cofinite for all $i\geq0$.}}
\end{cor}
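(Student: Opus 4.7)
The plan is to extend the inductive reduction in the proof of Theorem \ref{lem:3.7} by one more dimension in each variable, leveraging the local hypothesis together with the specific choice $\mathcal{S}=\mathcal{F}$. Both cases follow the same blueprint: reduce to $\Gamma_\mathfrak{a}(N)=0$, choose an $N$-regular element $x\in\mathfrak{a}$, pass to the quotient $N/xN$ whose dimension is one less, and use Theorem \ref{lem:3.7} on the smaller-dimensional data. Finally invoke \cite[Lemma 2.2]{AS} to lift $\mathcal{NF}$-$\mathfrak{a}$-cofiniteness from $(0:_{E_i}x)$ and $E_i/xE_i$ back to $E_i$, where $E_i$ denotes either $\mathrm{Ext}^i_R(N,M)$ or $\mathrm{Tor}_i^R(N,M)$.

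First, I would handle Case 1 ($\mathrm{dim}_RN=3$). Splitting off $\Gamma_\mathfrak{a}(N)$ via $0\to\Gamma_\mathfrak{a}(N)\to N\to N/\Gamma_\mathfrak{a}(N)\to 0$ and using a filtration whose factors are quotients $R/\mathfrak{p}$ with $\mathfrak{p}\in\mathrm{V}(\mathfrak{a})$ (together with Theorem \ref{lem:3.7} and Lemma \ref{lem:3.5'}) reduces the problem to $\Gamma_\mathfrak{a}(N)=0$. Then $\mathfrak{a}$ contains an $N$-regular $x$, and $\mathrm{dim}_RN/xN\leq 2$, so Theorem \ref{lem:3.7} supplies that $\mathrm{Ext}^i_R(N/xN,M)$ and $\mathrm{Tor}_i^R(N/xN,M)$ are $\mathcal{NF}$-$\mathfrak{a}$-cofinite. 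The multiplication-by-$x$ long exact sequences exhibit $(0:_{E_i}x)$ as a quotient of $\mathrm{Ext}^i_R(N/xN,M)$ (resp.\ $\mathrm{Tor}_{i+1}^R(N/xN,M)$) and $E_i/xE_i$ as a submodule of $\mathrm{Ext}^{i+1}_R(N/xN,M)$ (resp.\ $\mathrm{Tor}_i^R(N/xN,M)$). Case 2 ($\mathrm{dim}_RM=2$) is treated by the dual route: one may pass to a filtration of $M$ or, symmetrically, induct on $\mathrm{dim}_RN$ using the regular-element trick so that the hypothesis $\mathrm{dim}_RN\leq 2$ of Theorem \ref{lem:3.7} becomes accessible after at most one reduction step.

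The main obstacle is the sub/quotient closure: the long exact sequences only realize $(0:_{E_i}x)$ and $E_i/xE_i$ as subquotients of $\mathcal{NF}$-$\mathfrak{a}$-cofinite modules, and $\mathcal{NF}(R,\mathfrak{a})_{cof}$ is not a priori abelian in these dimensions, so I cannot merely cite Corollary \ref{lem:2.7}. This is precisely where the hypotheses $R$ local and $\mathcal{S}=\mathcal{F}$ enter: a module with support in $\mathrm{V}(\mathfrak{a})$ and finite-support Ext-pieces against $R/\mathfrak{a}$ behaves more rigidly in a local ring, and — since $\mathcal{F}$ contains the class of $R$-modules of finite support — a module of small dimension whose $\mathrm{Hom}_R(R/\mathfrak{a},-)$ lies in $\mathcal{NF}$ can be detected to be $\mathcal{NF}$-$\mathfrak{a}$-cofinite by the low-dimensional criteria of Theorem \ref{lem:2.3} and Lemma \ref{lem:2.1}. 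Concretely, the subquotients $(0:_{E_i}x)$ and $E_i/xE_i$ have dimension at most $\mathrm{dim}_RN/xN\leq 2$ (in Case 1) or are controlled by $\mathrm{dim}_RM\leq 2$ (in Case 2), and a local--$\mathcal{F}$ version of the abelian-category argument of Corollary \ref{lem:2.7} should confirm $\mathcal{NF}$-$\mathfrak{a}$-cofiniteness here. With that in hand, \cite[Lemma 2.2]{AS} closes the induction, yielding that $\mathrm{Ext}^i_R(N,M)$ and $\mathrm{Tor}_i^R(N,M)$ are $\mathcal{NF}$-$\mathfrak{a}$-cofinite for every $i\geq 0$.
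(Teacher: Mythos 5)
Your proposal has a genuine gap, and you have in fact put your finger on it yourself without resolving it. The inductive step requires knowing that $(0:_{E_i}x)$ and $E_i/xE_i$, which the long exact sequences exhibit only as a quotient and a submodule of the $\mathcal{NF}$-$\mathfrak{a}$-cofinite modules $\mathrm{Ext}^{*}_R(N/xN,M)$ (resp.\ $\mathrm{Tor}^R_{*}(N/xN,M)$), are themselves $\mathcal{NF}$-$\mathfrak{a}$-cofinite. In the case $\dim_RN=3$ these auxiliary modules are only known to have dimension $\leq 2$, so neither Lemma \ref{lem:3.8} (dimension $0$) nor Corollary \ref{lem:2.7}(1) (dimension $\leq 1$) applies, and the paper contains no subquotient-closure or abelianness result for $\mathcal{NF}$-$\mathfrak{a}$-cofinite modules of dimension $2$. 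Your assertion that ``a local--$\mathcal{F}$ version of the abelian-category argument of Corollary \ref{lem:2.7} should confirm'' this is precisely the missing content, not a proof of it; it is essentially as hard as the corollary being proved. The case $\dim_RM=2$ is in worse shape: there $N$ carries no dimension bound at all, so ``at most one reduction step'' on $N$ cannot bring you within reach of Theorem \ref{lem:3.7}, and a prime filtration of $M$ (which is not finitely generated) is not available.

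The paper's actual argument is entirely different and shows where the hypotheses ``$R$ local'' and ``$\mathcal{S}=\mathcal{F}$'' really enter. Since membership in $\mathcal{NF}$ is equivalent to being weakly Laskerian (\cite[Theorem 3.3]{Ba}), it suffices to show that every quotient $L/L'$ of each module $L=\mathrm{Ext}^j_R(R/\mathfrak{a},\mathrm{Ext}^i_R(N,M))$ or $\mathrm{Ext}^j_R(R/\mathfrak{a},\mathrm{Tor}^R_i(N,M))$ has finite $\mathrm{Ass}$. One reduces to $R$ complete, supposes $\mathrm{Ass}_R(L/L')$ contains a countably infinite set of primes $\mathfrak{p}_k\neq\mathfrak{m}$, applies countable prime avoidance (\cite[Lemma 3.2]{TM}) to get $\mathfrak{m}\nsubseteq\bigcup_k\mathfrak{p}_k$, and localizes at $S=R\setminus\bigcup_k\mathfrak{p}_k$. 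This localization kills the maximal ideal from the supports, so $\dim_{S^{-1}R}S^{-1}M\leq 1$ or $\dim_{S^{-1}R}S^{-1}N\leq 2$, and Corollary \ref{lem:2.7'} or Theorem \ref{lem:3.7} then makes $S^{-1}L$ weakly Laskerian, contradicting the survival of all the $S^{-1}\mathfrak{p}_k$ in $\mathrm{Ass}(S^{-1}L/S^{-1}L')$. If you want to salvage your approach you would need to supply the dimension-$2$ subquotient stability as a separate result; as written, the proposal does not close.
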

\begin{proof} Denote $\Phi$ the set of all modules $\mathrm{Ext}_R^j(R/\mathfrak{a},\mathrm{Ext}^i_R(N,M))$ and $\mathrm{Ext}_R^j(R/\mathfrak{a},\mathrm{Tor}_i^R(N,M))$ for $i,j\geq0$. Let $L\in\Phi$ and $L'$ be a submodule of $L$. It is enough to
show that $\mathrm{Ass}_RL/L'$ is finite. To this end, according to \cite[Exercise 7.7]{M} and \cite[Lemma 2.1]{TM} we may assume that $R$ is complete. Suppose the contrary is true. Then
there exists a countably infinite subset $\{\mathfrak{p}_k\}^\infty_{k=1}$ of $\mathrm{Ass}_RL/L'$, such that none of which is not equal
to $\mathfrak{m}$, and hence $\mathfrak{m}\nsubseteq\bigcup_{k=1}^\infty\mathfrak{p}_k$ by \cite[Lemma 3.2]{TM}. Let $S=R\backslash\bigcup_{k=1}^\infty\mathfrak{p}_k$. Then
the $S^{-1}R$-module $S^{-1}M$ is $\mathcal{NF}$-$S^{-1}\mathfrak{a}$-cofinite with $\mathrm{dim}_{S^{-1}R}S^{-1}M\leq1$ or $\mathrm{dim}_{S^{-1}R}S^{-1}N\leq2$, it follows from Corollary \ref{lem:2.7'} and Theorem \ref{lem:3.7}
that $S^{-1}L$ is a weakly Laskerian $S^{-1}R$-module and so $\mathrm{Ass}_{S^{-1}R}(S^{-1}L/S^{-1}L')$ is a finite set. But
$S^{-1}\mathfrak{p}_k\in\mathrm{Ass}_{S^{-1}R}(S^{-1}L/S^{-1}L')$  for all $k=1,2,\cdots$, which is a contradiction.
\end{proof}

\bigskip

\end{document}